\documentclass{amsart}
\usepackage{amsfonts,amscd,amsthm,amsgen,amsmath,amssymb}
\usepackage[all]{xy}
\usepackage{epsfig,color}
\usepackage[vcentermath]{youngtab}
\newtheorem{theorem}{Theorem}[section]
\newtheorem{lemma}[theorem]{Lemma}
\newtheorem{corollary}[theorem]{Corollary}
\newtheorem{proposition}[theorem]{Proposition}

\theoremstyle{definition}
\newtheorem{definition}[theorem]{Definition}
\newtheorem{example}[theorem]{Example}

\theoremstyle{remark}
\newtheorem{remark}[theorem]{Remark}

\numberwithin{equation}{section}

\begin{document}

\setlength\parskip{0.5em plus 0.1em minus 0.2em}

\title{Weights of finite cyclic actions on definite $4$-manifolds}
\author{David Baraglia}

\address{School of Mathematical Sciences, Adelaide University, Adelaide SA 5005, Australia}

\email{david.baraglia@adelaide.edu.au}


\date{\today}

\begin{abstract}
Let $X$ be a closed, smooth, orientable, positive definite $4$-manifold with $H_1(X ; \mathbb{Z}) = 0$. Let $p$ be a prime and suppose that $G = \mathbb{Z}_p$ acts smoothly on $X$ (if $p=2$ we also require an assumption on how $G$ acts on $H^2(X ; \mathbb{Z})$). Using equivariant Yang--Mills theory, Hambleton--Lee and Hambleton--Tanase proved (in the simply-connected case) that the fixed point set and tangential isotropy representations coincide with that of an equivariant connected sum of copies of $\mathbb{CP}^2$ on which $G$ acts linearly. We give a new proof of this result using equivariant Seiberg--Witten theory. Furthermore we also determine the weights of all equivariant line bundles on $X$, showing that these likewise coincide with that of an equivariant connected sum of linear actions on $\mathbb{CP}^2$.
\end{abstract}

\maketitle




\section{Introduction}

Let $X$ be a closed, smooth, orientable, positive definite $4$-manifold with $H_1(X ; \mathbb{Z}) = 0$. Let $p$ be a prime and suppose that $G = \mathbb{Z}_p = \langle g \rangle$ acts smoothly on $X$. If $p=2$ we assume that $H^2(X ; \mathbb{Z})$ contains no cyclotomic summands, by which we mean there does not exist $e \in H^2(X ; \mathbb{Z})$ with $e^2 = 1$ and $ge = -e$. Then the action of $g$ on $H^2(X ; \mathbb{Z})$ is a product of cycles of length $p$. More precisely, there exists an orthonormal basis of $H^2(X ; \mathbb{Z})$ of the form 
\[
e_i, \; 1 \le i \le t, \quad f_{j,k}, \; 1 \le j \le r, 0 \le k \le p-1
\]
such that 
\[
ge_i = e_i, \quad gf_{j,k} = f_{j,k+1}
\]
(with $k$ regarded as an element of $\mathbb{Z}_p$). The fixed point set of $g$ consists of isolated points $x_1, \dots , x_n$ and embedded spheres $\Sigma_1, \dots , \Sigma_m$ where $n+2m = t+2$. At each isolated point $x_i$ the tangent space $T_{x_i}X$ can be identified with $\mathbb{C}^2$ in such a way that the action of $g$ is given by $g(x,y) = ( \omega^{a_i}x , \omega^{b_i}y)$ for some $a_i,b_i \in \mathbb{Z}_p$, where $\omega = e^{2 \pi i/p}$. We call $(a_i,b_i)$ the {\em normal weights} of the point $x_i$. Note that $a_i,b_i$ are only defined up to the following equivalences $(a_i,b_i) \sim (b_i,a_i) \sim (-a_i,-b_i) \sim (-b_i,-a_i)$. Similarly $g$ acts on the normal bundle of each surface $\Sigma_j$ with a normal weight $c_j \in \mathbb{Z}_p$ ($c_j$ is only defined up to $c_j \sim -c_j$, but a choice a orientation on $\Sigma_j$ removes this ambiguity).

In this paper we will study the weights of equivariant line bundles on $X$. Let $L$ be an equivariant line bundle on $X$. Then over each point $x_i$, $g$ acts on $L|_{x_i}$ as multiplication by $\omega^{w_i}$ for some $w_i \in \mathbb{Z}_p$. Similarly over each surface $\Sigma_j$, $g$ acts on $L|_{\Sigma_j}$ as multiplication by $\omega^{v_j}$ for some $v_j \in \mathbb{Z}_p$. We refer to the collections $\{ w_i \}, \{ v_j \}$ as the {\em weights} of the equivariant line bundle $L$.

Recall that equivariant line bundles are classified up to isomorphism by their equivariant first Chern class $c_1(L) \in H^2_{G}(X ; \mathbb{Z})$. In the situation we are considering the forgetful map $H^2_{G}(X ; \mathbb{Z}) \to H^2(X ; \mathbb{Z})^G$ is surjective with kernel $H^2_G(pt ; \mathbb{Z}) = \mathbb{Z}_p$. Put differently, if $L$ is a line bundle on $X$ with $c_1(L) \in H^2(X ; \mathbb{Z})^G$, then $L$ admits a lift of $G$ and this lift is unique up to multiplication by a $p$-th root of unity $\omega^u$.

In this paper we will study and solve the following problem:

{\bf Problem:} For each $c \in H^2(X ; \mathbb{Z})^G$, determine the weights $\{ w_i \} , \{ v_j \}$ of an equivariant line bundle $L$ representing $c$.

Note that the equivariant line bundle $L$ is determined by $c$ only up an element of $H^2_G(pt ; \mathbb{Z})$, so the weights $\{ w_i \}, \{ v_j \}$ are only determined by an overall shift $w_i \mapsto w_i + u$, $v_j \mapsto v_j + u$, where $u \in \mathbb{Z}_p$.

The group $H^2(X ; \mathbb{Z})^G$ is free abelian with basis $e_1, \dots , e_t , f_1 , \dots , f_r$, where $f_j = f_{j,0} + f_{j,1} + \cdots + f_{j,p-1}$. The weights depend linearly on the first Chern class, so to solve the problem we just need to determine the weights for $e_1, \dots , e_t , f_1 , \dots, f_r$. We show in Section \ref{sec:definite} that the classes $f_1, \dots , f_r$ admit lifts to $H^2_G(X ; \mathbb{Z})$ for which the corresponding weights are all zero. The problem reduces to finding the weights for $e_1, \dots , e_t$. Let $\widehat{e}_1, \dots , \widehat{e}_t$ denote (arbitrary) lifts of $e_1, \dots , e_t$ to $H^2_G(X ; \mathbb{Z})$. Let $\{ (w_a)_i\}, \{ (v_a)_j \}$ denote the weights of the equivariant line bundle corresponding to $\widehat{e}_i$. So the problem is to determine $\{ (w_a)_i \}, \{ (v_a)_j \}$.

Our main result says that the fixed point data $a_i,b_i,c_j \in \mathbb{Z}_p$, $[\Sigma_j] \in H^2(X ; \mathbb{Z})$ and the weight data $(w_a)_i , (v_a)_j$ is identical to that of an equivariant connected sum of linear $\mathbb{Z}_p$-actions on $\mathbb{CP}^2$.

\begin{theorem}\label{thm:main}
Let $X$ be a closed, smooth, orientable, positive definite $4$-manifold with $H_1(X ; \mathbb{Z}) = 0$. Let $G = \mathbb{Z}_p = \langle g \rangle$ for $p$ a prime act smoothly on $X$. If $p=2$ assume that $H^2(X ; \mathbb{Z})$ has no cyclotomic summands. Then there is a $4$-manifold $X'$ which is an equivariant connected sum of linear $\mathbb{Z}_p$-actions on $\mathbb{CP}^2$ with the following properties:
\begin{itemize}
\item[(1)]{There is an isometry $\phi : H^2(X' ; \mathbb{Z}) \to H^2(X ; \mathbb{Z})$ which respects the $\mathbb{Z}_p$-actions.}
\item[(2)]{The fixed point sets $\{x_1 , \dots , x_n , \Sigma_1 , \dots , \Sigma_m \}$, $\{x'_1 , \dots , x'_n , \Sigma'_1 , \dots , \Sigma'_m\}$ of $X$ and $X'$ have the same number of points and surfaces and the same normal weights: $(a_i , b_i) = (a'_i , b'_i)$ for all $i$, $c_j = c'_j$ for all $j$. Moreover the fixed surfaces represent the same cohomology classes under $\phi$, that is, $[\Sigma_j] = \phi( [\Sigma'_j])$.}
\item[(3)]{The weights of equivariant line bundles on $X$ and $X'$ agree. More precisely, if we set $e'_i = \phi^{-1}(e_i)$ for $i = 1, \dots , t$, then (for suitably chosen lifts $\widehat{e}_a, \widehat{e}'_a$) we have $(w'_a)_i = (w_a)_i$, $(v'_a)_j = (v_a)_j$ for all $a,i,j$.}
\end{itemize}

\end{theorem}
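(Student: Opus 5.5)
The plan is to extract, from equivariant Seiberg--Witten theory, a family of congruence constraints on the fixed point and weight data, and then to show that any data satisfying these constraints is realised by an equivariant connected sum of linear actions on $\mathbb{CP}^2$.

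\textbf{Constraints from Seiberg--Witten theory.} Since $X$ is positive definite with $b_1=0$, Donaldson's theorem makes $H^2(X;\mathbb{Z})$ diagonal. Every characteristic vector $k=\sum \pm e_i + \sum \pm f_{j,k}$ with $k^2=\sigma(X)$ then gives a spin$^c$ structure $\mathfrak{s}$ with $d(\mathfrak{s}) = (k^2-\sigma)/4 = 0$ in the relevant sense. Equivalently, for the reversed orientation $b^+=0$ and the expected dimension is $-1$, so the unperturbed moduli space consists of the reducible alone. We take the $G$-invariant characteristic vectors, i.e.\ those whose signs are constant on each $p$-cycle $f_{j,\ast}$; each lifts to a $G$-equivariant spin$^c$ structure.

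\textbf{Equivariant Bauer--Furuta invariant.} Apply the $G$-equivariant Bauer--Furuta invariant (equivalently an equivariant wall-crossing or index argument) to each such structure. This yields an equivariant map $S^{V} \to S^{V'}$ whose $G$-equivariant degree, computed via tom Dieck localisation / the $G$-signature and $G$-spin$^c$ index formulas, forces an identity among characters. Concretely, the equivariant index of the Dirac operator, $\operatorname{ind}_G D \in R(G)$, must equal a specific multiple of the regular representation (or zero). By Atiyah--Bott localisation this becomes an explicit identity in $\mathbb{Q}(\omega)$, summing over the isolated points $x_i$ terms built from $\omega^{w_i}$, $\omega^{a_i}$, $\omega^{b_i}$, and over the surfaces $\Sigma_j$ terms involving $\omega^{v_j}$, $c_j$, $[\Sigma_j]^2$ and $k\cdot[\Sigma_j]$. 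Here the weights are those of $\det \mathfrak{s}$, which is expressed through $\widehat{e}_a$ and the $f_j$. Varying the signs of the $e_a$ gives a large family of identities. Combined with the $G$-signature theorem, we expect these to determine all the data $(a_i,b_i), c_j, [\Sigma_j], (w_a)_i, (v_a)_j$ up to the allowed equivalences.

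\textbf{Combinatorial realisation.} We then show that any data satisfying these identities is realised by an equivariant connected sum. We would first identify, for each $e_a$, the fixed components on which $\widehat{e}_a$ has nontrivial weight. The expectation is that the data splits into $t$ blocks, each resembling a linear action on $\mathbb{CP}^2$ (three points, or a point and a sphere), glued along fixed points. The blocks are joined in a tree, where an equivariant connected sum at a fixed point requires matching weights $(a,b)$ with $(-a,b)$ after an orientation reversal. The free orbits $f_j$ contribute $p$-fold sums of $\mathbb{CP}^2$ permuted cyclically, summed along a free orbit; these add no fixed data and give trivial weights, consistent with Section \ref{sec:definite}. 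We construct $X'$ by induction on $t$. Find an index $a$ whose weight pattern is that of a single $\mathbb{CP}^2$ summand, split it off, and check that the remaining data again satisfies the constraints (the Euler count $n+2m=t+2$ drops appropriately). Finally, match $\phi$ and the classes $[\Sigma_j]$.

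\textbf{Main obstacle.} The hard step is extracting enough information from the Seiberg--Witten constraints to force the tree/block structure. The localisation identities are sums of roots of unity, and deducing from them the exact pairing of weights (rather than mere equality of some character) needs care, especially for $p=2$ and for fixed spheres. There I would first try to use linear independence of $\{\omega^k\}_{1\le k\le p-1}$ over $\mathbb{Q}$, applied to differences of identities for characteristic vectors differing in a single sign of $e_a$.
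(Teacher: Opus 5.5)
\textbf{There is a genuine gap in your third step.} Your first two steps agree with the paper. Equivariant Seiberg--Witten theory supplies $d_u\ge 0$ for every multiplicity of the $G$-index of the Dirac operator. Together with $\sum_u d_u=(c^2-\sigma)/8=0$, this kills the index of $\mathfrak{s}(\epsilon)$; it is not a statement about multiples of the regular representation. The index is then fed into the Lefschetz formula.

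The gap is in the step that carries the real content of the theorem. You say you expect the data to split into $\mathbb{CP}^2$ blocks, and that some index $a$ has the weight pattern of a single summand, but you give no argument for either. The tactic you propose, linear independence of the $\omega^k$ over $\mathbb{Q}$ applied to sign-flip differences, has no evident traction, for two reasons. The local terms have non-rational denominators $(\omega^{a_i}-\omega^{-a_i})(\omega^{b_i}-\omega^{-b_i})$. And flipping $\epsilon_a$ shifts the exponents by $-2(w_a)_i$ and $-2(v_a)_j$ at every fixed component at once. You are left with exponential sums over all of $F$ and no way to isolate where $\widehat e_a$ is supported.

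\textbf{The missing idea: linearise mod $p$.} Expand in $x=t-1$ in $\mathbb{Z}_p[x]/(x^{p-1})$, or equivalently localise equivariant cohomology at $x$. Everything then becomes trace identities in the finite commutative Frobenius $\mathbb{Z}_p$-algebra $H=H^0_{loc}(F)$. There $w_a=\widehat e_a/x$ satisfy $\langle w_a\rangle=0$ and $\langle w_aw_b\rangle=\delta_{ab}$ (the latter by localising the intersection form). The index identities become $\langle w(\epsilon)^3\rangle=\langle\rho\, w(\epsilon)\rangle$ and its quartic analogue, (W6) and (W7). The splitting is then pure algebra:
\begin{itemize}
\item The $\epsilon_a\epsilon_b\epsilon_c$-part of (W6) gives $\langle w_aw_bw_c\rangle=0$ for distinct indices, hence $w_aw_b=D_{ab}w_a+D_{ba}w_b$.
\item (W6) and (W7) together give $D_{ab}D_{ba}=0$, so the relation $D_{ab}\neq 0$ is acyclic.
\item A block-decomposition argument then produces, after weight shifting, an index $0$ with $w_0w_a=0$ for all $a\neq 0$.
\item An eigenvalue analysis of multiplication by $w_0$ shows that $w_0$ is supported on two points, on one sphere, or on one point and one sphere. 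That support is the $\mathbb{CP}^2$ summand you split off.
\end{itemize}

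\textbf{Two further inputs are absent from your plan.}
\begin{itemize}
\item You only use characteristic vectors with $k^2=\sigma(X)$. The paper also uses $2\widehat e_a+\sum_b\epsilon_b\widehat e_b$, whose index has rank one and so, by $d_u\ge 0$, is a single character. The resulting quadratic identity (W8) is what pins down $a_i^2+b_i^2$ at the two fixed points carrying $w_0$ in the two-point case, for $p\ge 7$.
\item Matching $[\Sigma_j]$ is not automatic. The weight data only sees $\langle e_a,[\Sigma_j]\rangle$ and $[\Sigma_j]^2$ mod $p$, and nothing about the regular summands. The paper first shows $(u_a)_j\in\{0,\pm1\}$, with at most one nonzero $j$ for each $a$. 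It then applies the $G$-signature theorem, in Dedekind-sum form, to both $X$ and $X'$. This forces $[\Sigma_j]^2=\#\{a:(u_a)_j\neq 0\}$, and hence $[\Sigma_j]$ is the corresponding $\pm1$ combination of the $e_a$.
\end{itemize}
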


For an equivariant connected sum of linear $\mathbb{Z}_p$-actions on $\mathbb{CP}^2$, the weights $(w_a)_i, (v_a)_j$ are easily computed. In this sense, Theorem \ref{thm:main} gives an explicit description of the possible weights of equivariant line bundles for $\mathbb{Z}_p$-actions on definite $4$-manifolds.

Parts (1) and (2) of Theorem \ref{thm:main} were orignally proven (in the simply-connected case) by Hambleton--Lee \cite{hl1,hl2} and Hambleton--Tanase \cite{ht} using equivariant Yang-Mills theory. Part (3) of Theorem \ref{thm:main} is a new result which can be seen as a strengthening of the Hambleton--Lee--Tanase result (for cyclic groups of prime order). It says that a $\mathbb{Z}_p$-action on a closed, smooth, positive definite $4$-manifold resembles an equivariant connected sum of linear $\mathbb{Z}_p$-actions on $\mathbb{CP}^2$ in a very strong sense.

We prove Theorem \ref{thm:main} using equivariant Seiberg--Witten theory. In \cite{bar2} we used families Seiberg--Witten theory to deduce a kind of diagonalisation theorem for {\em families} of definite $4$-manifolds. The statement and proof of Theorem \ref{thm:main} are in some sense equivariant counterparts of \cite{bar2}. The proof in \cite{bar2} used a families version of Donaldson's theorem \cite{bar} to deduce properties of the families index of spin$^c$-Dirac operators and from this deduced properties of the tautological classes. In this paper we instead apply an {\em equivariant} version of Donaldson's theorem to equivariant spin$^c$-structures on $X$. This imposes a highly complex system of equations that must be satisfied by the weights of equivariariant line bundles on $X$. We introduce the notion of a {\em weight system} (Definition \ref{def:wtsys}) which captures the abstract structure of these equations. We then show that every abstract weight system takes the form of an equivariant connected sum of linear $\mathbb{Z}_p$-actions on $\mathbb{CP}^2$.

\subsection{Structure of the paper}

The structure of this paper is as follows. In Section \ref{sec:index} we show how the Lefschetz index theorem for the signature and Dirac operators can be used to deduce a series of mod $p$ congruences satisfied by the fixed point data and the weights of determinant line bundles of equivariant spin$^c$-structures. In the case of the signature operator, this techinque is due to Hambeton--Lee--Madsen \cite{hlm}. The case of the Dirac operator appears to be new. In Section \ref{sec:loc} we make use of the localisation theorem in equivariant cohomology to obtain some additional congrences. Furthermore, the localisation perspective allows us to re-write the congruences of Section \ref{sec:index} in a more economical and insightful manner. In Section \ref{sec:definite} we specialise to the case that the $4$-manifold is positive definite. By applying equivariant Seiberg--Witten theory, we obtain a system of equations that the weights of the equivariant line bundles on $X$ must satisfy. We abstract these equations into the notion of a weight system (Definition \ref{def:wtsys}). In Section \ref{sec:decomp} we study abstract weight systems and prove a classification theorem (Theorem \ref{thm:wtdec}) for $p \ge 7$. Every abstract weight system is isomorphic to the weight system of an equivariant connected sum of linear $\mathbb{Z}_p$-actions on $\mathbb{CP}^2$. From this we then deduce Theorem \ref{thm:main} for $p \ge 7$. Finally, in Section \ref{sec:smallp} we deal with the exceptional cases $p = 2,3,5$ where some additional modifications to the definition of a weight system are required in order ensure that the classification theorem and subsequently Theorem \ref{thm:main} carries over.

\noindent{\bf Acknowledgments.} The author was financially supported by an Australian Research Council Future Fellowship, FT230100092.

\section{Index theorems}\label{sec:index}

Let $p$ be a an odd prime. Let $G = \mathbb{Z}_p = \langle g \rangle$ act smoothly on a closed $4$-manifold $X$. Since $p$ is an odd prime, the fixed point set will consits of isolated points $x_1, \dots , x_n$ and orientable surfaces $\Sigma_1, \dots , \Sigma_m$. Set $\omega = e^{2\pi i/p}$. For $u \in \mathbb{Z}_p$, let $\mathbb{C}_u$ denote the $1$-dimensional representation of $G$ where $g$ acts as multiplication by $\omega^u$. We refer to $u$ as the {\em weight} of the representation $\mathbb{C}_u$.

At an isolated point $x_i$ we will have  $T_{x_i} X \cong \mathbb{C}_{a_i} \oplus \mathbb{C}_{b_i}$ for some non-zero weights $a_i,b_i \in \mathbb{Z}_p$. Since $T_{x_i} X$ is a real, oriented representation of $G$ one finds that the weights $(a_i,b_i)$ are only well-defined up ordering $(a_i , b_i ) \sim (b_i , a_i)$ and an overall sign change $(a_i , b_i) \sim (-a_i , -b_i)$.

Let $N_j$ denote the normal bundle of $\Sigma_j$. Since $G$ fixes $\Sigma_j$ pointwise, $G$ will act on $N_j$ with some weight $c_j$. The weight $c_j$ depends on a choice of orientation of $N_j$. If we orient $\Sigma_j$ so that $TX|_{\Sigma_j} \cong T\Sigma_j \oplus N_j$ as oriented vector bundles, then $c_j$ depends on the choice of orientation of $\Sigma_j$. Reversing the orientation on $\Sigma_j$ has the effect of changing $c_j$ to $-c_j$.

Let $Sig$ denote the virtual representation $Sig = H^+(X)_{\mathbb{C}} - H^-(X)_{\mathbb{C}}$ where $H^{\pm}(X)_{\mathbb{C}} = H^{\pm}(X) \otimes_{\mathbb{R}} \mathbb{C}$ and let $\chi_{Sig}(g)$ denote the trace of $g$ acting on $Sig$. The $G$-signature theorem for $X$ (\cite[\textsection 6]{as}) gives 
\[
\chi_{Sig}(g) = \sum_i \frac{(\omega^{a_i} + 1)(\omega^{b_i}+1)}{(\omega^{a_i}-1)(\omega^{b_i} - 1)} - 4 \sum_j \frac{\omega^{c_j}}{(\omega^{c_j}-1)^2} [\Sigma_j]^2.
\]

Let $\sigma_u$ denote the multiplicity of $\mathbb{C}_u$ in $Sig$. Then
\[
\chi_{Sig}(g) = \sum_{u=0}^{p-1} \sigma_u \omega^u.
\]
Note that $\sigma_u = \sigma_{-u}$ because $Sig$ is the complexification of $H^+(X) - H^-(X)$. The $G$-signature theorem now becomes
\begin{equation}\label{equ:gsig1}
\sum_{u=0}^{p-1} \sigma_u \omega^u = \sum_i \frac{(\omega^{a_i} + 1)(\omega^{b_i}+1)}{(\omega^{a_i}-1)(\omega^{b_i} - 1)} - 4 \sum_j \frac{\omega^{c_j}}{(\omega^{c_j}-1)^2} [\Sigma_j]^2.
\end{equation}
This is an equation in the cylotomic field $\mathbb{Q}[\omega] = \mathbb{Q}[t]/(\phi(t))$, where $\phi(t) = 1 + t + \cdots + t^{p-1}$ is the $p$-th cyclotomic polynomial. Following the method used in \cite{hlm}, we will extract a series of mod $p$ congruences from this equation. The first step is to rewrite Equation (\ref{equ:gsig1}) as an equation in $\mathbb{Z}[\omega]$. Given $a \in \mathbb{Z}_p^*$, let $a' \in \mathbb{Z}$ denote the unique integer such that $0 < a' < p$ and $aa' = 1 \; ({\rm mod} \; p)$. We have
\[
\frac{\omega^{a'}-1}{\omega - 1} = 1 + \omega + \omega^2 + \cdots + \omega^{a'-1}.
\]
To this equation we apply the automorphism $\mathbb{Z}[\omega] \to \mathbb{Z}[\omega]$ which sends $\omega$ to $\omega^a$. This gives
\begin{equation}\label{equ:fracwa}
\frac{ \omega - 1}{\omega^{a} - 1} = 1 + \omega^{a} + \omega^{2a} + \cdots + \omega^{(a'-1)a} \in \mathbb{Z}[\omega].
\end{equation}
Equation (\ref{equ:fracwa}) shows that $(\omega - 1)/(\omega^{a} - 1)$ belongs to $\mathbb{Z}[\omega]$. Multiplying both sides of Equation (\ref{equ:gsig1}) by $(\omega-1)^2$ gives:
\begin{align*}
(\omega-1)^2 \sum_{u=0}^{p-1} \sigma_u \omega^u &= \sum_i (\omega^{a_i}+1)(\omega^{b_i} +1) \left(\frac{\omega - 1}{\omega^{a_i} - 1} \right) \left(\frac{\omega - 1}{\omega^{b_i} - 1} \right) \\
& - 4 \sum_j \omega^{c_j} \left(\frac{\omega - 1}{\omega^{c_j} - 1} \right)^2 [\Sigma_j]^2
\end{align*}
which is now an equality in $\mathbb{Z}[\omega] = \mathbb{Z}[t]/(\phi(t))$. We will lifts both sides of this equation to $\mathbb{Z}[t]$. Using Equation (\ref{equ:fracwa}), we see that $(\omega-1)/(\omega^a-1)$ can be lifted to $\psi_a(t)$, where we set
\[
\psi_a(t) = 1 + t^a + t^{2a} + \cdots + t^{(a'-1)a}.
\]
So we have
\begin{align}\label{equ:gsig2}
(t-1)^2 \sum_{u=0}^{p-1} \sigma_u t^u &= \sum_i (t^{a_i}+1)(t^{b_i} +1) \psi_{a_i}(t) \psi_{b_i}(t) \\
&- 4 \sum_j t^{c_j} \psi_{c_j}(t)^2 [\Sigma_j]^2 + f(t)\phi(t) \nonumber
\end{align}
for some $f(t) \in \mathbb{Z}[t]$. Next we will reduce this equation mod $p$. Since $(t-1)\phi(t) = t^p-1 = (t-1)^p \in \mathbb{Z}_p[t]$, it follows that $\psi(t) = (t-1)^{p-1} \in \mathbb{Z}_p[t]$. So reducing mod $p$ and then equating both sides of Equation (\ref{equ:gsig2}) modulo $(t-1)^{p-1}$ gives
\begin{align}\label{equ:gsig3}
(t-1)^2 \sum_{u=0}^{p-1} \sigma_u t^u &= \sum_i (t^{a_i}+1)(t^{b_i} +1) \psi_{a_i}(t) \psi_{b_i}(t) \\
&- 4 \sum_j t^{c_j} \psi_{c_j}(t)^2 [\Sigma_j]^2 \text{ in } \mathbb{Z}_p[t]/( (t-1)^{p-1} ). \nonumber
\end{align}

Set $x = t-1$ so that $\mathbb{Z}_p[t]/( (t-1)^{p-1} ) = \mathbb{Z}_p[x]/(x^{p-1})$. We will expand both sides of Equation (\ref{equ:gsig3}) in powers of $x$ and then equate coefficients up to order $p-2$. Actually we will only expand up to order $x^4$ when $p \ge 7$, to order $x^3$ when $p=5$ and to order $x$ when $p=3$. For this we need the expansions of $t^u$ and $\psi_a(t)$ in powers of $x$. Clearly,
\[
t^u = (1+x)^u = 1 + ux + \binom{u}{2}x^2 + \binom{u}{3}x^3 + \cdots
\]
and hence
\begin{align*}
\psi_a(t) &= \sum_{r=0}^{a'-1} t^{ra} \\
&= \sum_{r=0}^{a'-1} \sum_{k \ge 0} \binom{ra}{k} x^k \\
&= \sum_{k \ge 0} c(a,k) x^k
\end{align*}
where
\[
c(a,k) = \sum_{r=0}^{a'-1} \binom{ra}{k}.
\]
We have $c(a,0) = a'$ and
\[
c(a,1) = \sum_{r=0}^{a'-1} ar = aa' (a'-1)/2 = (a'-1)/2 = -(a-1)/2a \; ({\rm mod} \; p).
\]
In a similar manner we can compute $c(a,2), \dots , c(a,4) \; ({\rm mod} \; p)$. This gives
\begin{align}\label{equ:psia}
\psi_a(t) &= \frac{1}{a} - \frac{(a-1)}{2a}x + \frac{(a^2-1)}{12a}x^2 - \frac{(a^2-1)}{24a}x^3 \\ 
& \quad \quad - \frac{(a^2-1)(a^2-19)}{720a}x^4 \text{ in } \mathbb{Z}_p[x]/(x^5) \text{ for } p \ge 7. \nonumber
\end{align}

Notice that the denominators only have $2,3,5$ in their prime factorisation, so this equation makes sense in $\mathbb{Z}_p[x]/(x^5)$ for $p \ge 7$. When $p=5$ the same equation holds up to order $x^3$ and when $p=3$ the equation holds up to order $x$. From this, we find

\begin{align*}
(t^a+1)\psi_a(t) & = \frac{2}{a} + \frac{1}{a}x + \frac{(a^2-1)}{6a}x^2 - \frac{(a^2-1)}{12a}x^3 \\
& \quad \quad - \frac{(a^2-1)(a^2-19)}{360a}x^4 +  \text{ in } \mathbb{Z}_p[x]/(x^5),
\end{align*}

which holds up to order $x^3$ when $p=5$ and up to $x$ when $p=3$. Unless stated otherwise all of the expansions given below will hold only or order $x^3$ when $p=5$ and to order $x$ when $p=3$. Taking the above equation for $a$ and $b$ and multiplying them together gives

\begin{align*}
(t^a+1)(t^b+1)\psi_a(t)\psi_b(t) &= \frac{4}{ab} + \frac{4}{ab}x + \frac{1}{3}\left(\frac{a^2+b^2+1}{ab}\right)x^2 \\
& \quad \quad -\frac{1}{180}\left( \frac{ a^4 + b^4 -5a^2b^2 + 3}{ab} \right)x^4 \text{ in } \mathbb{Z}_p[x]/(x^5).
\end{align*}

Setting Equation (\ref{equ:psia}) and replacing $a$ by $c$ gives

\begin{align*}
\psi_c(t)^2 &= \frac{1}{c^2}  - \frac{(c-1)}{c^2}x + \frac{(5c-1)(c-1)}{12c^2}x^2 \\
& \quad \quad -\frac{(c^3-c)}{12c^2}x^3 + \frac{(c^2-1)(c^2+10c+1)}{240c^2}x^4 \text{ in } \mathbb{Z}_p[x]/(x^5).
\end{align*}

To compute $t^c \psi_c(t)^2$, first note that $(t^c-1)\psi_c(t) = t-1 = x \in \mathbb{Z}_p[x]/(x^5)$. So $(t^c-1)\psi_c(t)^2 = x \psi_c(t)$ and thus
\[
t^c \psi_c(t)^2 = x \psi_c(t) + \psi_c(t)^2 \text{ in } \mathbb{Z}_p[x]/(x^5).
\]
So from the expansions of $\psi_c(t)$ and $\psi_c(t)^2$, we obtain
\[
t^c \psi_c(t)^2 = \frac{1}{c^2} +\frac{1}{c^2}x - \frac{(c^2-1)}{12c}x^2 + \frac{(c^4-1)}{240c^2}x^4 \text{ in } \mathbb{Z}_p[x]/(x^5).
\]

Notice that
\[
x^2(t^u + t^{p-u}) = 2x^2 + u^2 x^4 \text{ in } \mathbb{Z}_p[x]/(x^5).
\]
Since $\sigma_u = \sigma_{-u}$, it follows that
\[
x^2 \sum_{u=0}^{p-1} \sigma_u t^u = x^2 \sigma(X) + \sum_{u=1}^{p-1} \frac{1}{2}u^2 x^4 \text{ in } \mathbb{Z}_p[x]/(x^5).
\]

After substituting these expansions into Equation (\ref{equ:gsig3}) and equating coefficients, we obtain

\begin{align}
\sum_i \frac{1}{a_i b_i} - \sum_j \frac{[\Sigma_j]^2}{c_j^2} &= 0, \label{equ:gsigp1} \\
\sum_i \frac{a_i^2 + b_i^2}{a_ib_i} + \sum_j [\Sigma_j]^2 &= 3\sigma(X), \label{equ:gsigp2} \\
\sum_i \frac{ a_i^4 + b_i^4 - 5a_i^2 b_i^2}{a_ib_i} + 3 \sum_j [\Sigma_j]^2 c_j^2 &= -90 \sum_u u^2 \sigma_u. \label{equ:gsigp3}
\end{align}
Equation (\ref{equ:gsigp3}) is valid for $p \ge 7$, Equations (\ref{equ:gsigp2}) is valid for $p \ge 5$ and Equation (\ref{equ:gsigp1}) is valid for all odd $p$. We will see in Section \ref{sec:loc} that (\ref{equ:gsigp2}) is in fact also valid for $p=3$.

Next we consider the Lefschetz index for spin$^c$-Dirac operators \cite{as}. To apply the Lefschetz index theorem we need equivariant spin$^c$-structures. 

\begin{proposition}\label{prop:eqspinc}
Let $X$ be a closed orientable smooth $4$-manifold with $b_1(X) = 0$ and let $G = \mathbb{Z}_p$ for a prime $p$ act smoothly and orientation preservingly on $X$. Let $\mathfrak{s}$ be a spin$^c$-structure that is preserved by $G$. Then $\mathfrak{s}$ can be made into a $G$-equivariant spin$^c$-structure. If $p$ is odd, then for every lift $\widehat{c(\mathfrak{s})}$ of $c(\mathfrak{s})$ to $H^2_G(X ; \mathbb{Z})$, $\mathfrak{s}$ is the equivariant first Chern class of the determinant line bundle of a lift of $G$ to $\mathfrak{s}$. If $p=2$, then one and only one lift of $c(\mathfrak{s})$ to $H^2_G(X ; \mathbb{Z})$ is the equivariant first Chern class of the determinant line bundle of a lift of $G$ to $\mathfrak{s}$.
\end{proposition}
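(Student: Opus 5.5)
The plan is to treat this as an obstruction-theory problem for lifting the $G$-action on $X$ to the principal $Spin^c(4)$-bundle $P \to X$ underlying $\mathfrak{s}$. Consider the group $\mathcal{G}$ of all pairs $(h,\tilde h)$, where $h \in G$ and $\tilde h : P \to P$ is a bundle automorphism covering the action of $h$ on $X$ (with the usual compatibility with the frame bundle). Since $G$ preserves $\mathfrak{s}$, every $h$ admits such a lift. The lifts of the identity are the gauge group $Map(X, S^1)$, so we get an extension
\[
1 \to Map(X,S^1) \to \mathcal{G} \to G \to 1 .
\]
Because $b_1(X) = 0$, $Map(X,S^1)$ deformation retracts onto the constants $S^1$. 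Replacing $\mathcal{G}$ by the preimage of $S^1$ modulo the contractible factor $Map(X,\mathbb{R})$ (choose $\tilde g$, then correct by a gauge transformation), we reduce to a central extension $1 \to S^1 \to \mathcal{G}' \to \mathbb{Z}_p \to 1$. Such an extension always splits: choose a lift $\tilde g$, so $\tilde g^p = \lambda \in S^1$, and replace $\tilde g$ by $\mu^{-1}\tilde g$ with $\mu^p = \lambda$. (Abstractly, $H^2(\mathbb{Z}_p; S^1) \cong H^3(\mathbb{Z}_p;\mathbb{Z}) = 0$.) The main technical point is checking that $\tilde g^p$ is a constant gauge transformation. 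It covers the identity, so it lies in $Map(X,S^1)$, and it commutes with $\tilde g$. I would first arrange $\tilde g^p \in S^1$ by adjusting $\tilde g$ with $e^{if}$ and averaging $f$ over the orbit. Alternatively, I would argue via the $Map(X,\mathbb{R})$ part having vanishing cohomology, since that coefficient module is uniquely divisible by $p$. This gives a $G$-equivariant spin$^c$-structure.

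Next I would classify all lifts. Any two splittings differ by $\tilde g \mapsto \zeta \tilde g$ with $\zeta^p = 1$, that is $\zeta = \omega^u$ (up to gauge conjugation, which does not change the equivariant isomorphism class). On the determinant line bundle $\det = P \times_{Spin^c(4)} S^1$, the central $S^1$ acts by the square. So twisting the spinor lift by $\omega^u$ changes the lift on $\det$ by $\omega^{2u}$. In equivariant Chern class terms, $c_1^G(\det)$ changes by $2u$ times the generator of $H^2_G(pt;\mathbb{Z}) = \mathbb{Z}_p$, which is the kernel of $H^2_G(X;\mathbb{Z}) \to H^2(X;\mathbb{Z})^G$ as stated in the introduction. (This uses the Serre spectral sequence with $H^1(X) = 0$ and a fixed point to split off $H^*_G(pt)$.)

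For $p$ odd, $u \mapsto 2u$ is a bijection of $\mathbb{Z}_p$. Hence the determinant lifts realise every element of the coset of lifts of $c(\mathfrak{s})$, which is the first claim.

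For $p = 2$, the multiplication by $2$ is zero, so all lifts of $G$ to $\mathfrak{s}$ give the same equivariant class of $\det$. This shows that at most one lift of $c(\mathfrak{s})$ occurs, and existence shows exactly one does.

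The step I expect to be the main obstacle is the $p=2$ case of the existence argument together with the claim about $\det$. When $p = 2$ the action could have fixed surfaces with odd behaviour, and the spin$^c$ lift must square to the identity rather than $-1$. The central-extension argument above still gives a splitting, since $S^1$ is divisible. The care needed is that the relevant extension is by $S^1$ acting on spinors, not by $\pm 1$ on frames. I would handle this by always working with $Spin^c$, where $-1 \in Spin(4)$ is absorbed into the central $S^1$, so the square-root adjustment $\mu$ is always available.
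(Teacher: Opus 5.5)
Your proposal is correct and follows essentially the paper's route: you reduce the lifting problem to a central $S^1$-extension of $\mathbb{Z}_p$, which the paper simply cites from Baraglia--Hekmati and you derive from $b_1(X)=0$ via the gauge group. You then split it by replacing $\tilde g$ with $\mu^{-1}\tilde g$ where $\mu^p=\tilde g^p$, and note that twisting by $\omega^k$ shifts the equivariant class of the determinant line by $2kx$; this exhausts all lifts $c_0+mx$ when $p$ is odd and is trivial when $p=2$.

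Two small remarks. Your observation that any two lifts agree, up to gauge conjugation, with some $\omega^u\tilde g$ is exactly what the $p=2$ uniqueness claim needs, and the paper takes it for granted. Your appeal to a fixed point is unnecessary, and the proposition does not guarantee one exists: $H^1(X;\mathbb{Z})=0$ alone yields the exact sequence $0\to H^2_G(pt;\mathbb{Z})\to H^2_G(X;\mathbb{Z})\to H^2(X;\mathbb{Z})^G\to 0$.
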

\begin{proof}
Let $\mathfrak{s}$ be a $G$-invariant spin$^c$-structure. Using the same method as \cite[\textsection 3]{bh}, we obtain an $S^1$-central extension $1 \to S^1 \to G_{\mathfrak{s}} \to G \to 1$ of $G$ and a lift of the $G$-action on $X$ to an action of $G_{\mathfrak{s}}$ on the spinor bundles corresponding to $\mathfrak{s}$, such that the $S^1$ subgroup acts as scalar multiplication. Let $g \in G$ be a generator of $G$ and let $g' \in G_{\mathfrak{s}}$ be a lift of $g$ to $G_{\mathfrak{s}}$. We have $(g')^p = \zeta$ for some $\zeta \in S^1$. Let $u \in S^1$ satisfy $u^p = \zeta$. Then $\widehat{g} = u^{-1}g'$ satisfies $(\widehat{g})^p = 1$. Hence $\widehat{g}$ determines a lift of $G$ to the spinor bundles associated to $\mathfrak{s}$. Different lifts of $g$ are given by $\omega^k \widehat{g}$ where $\omega = e^{2\pi i/p}$. 

Let $c_0 \in H^2_G(X ; \mathbb{Z})$ denotes the equivariant first Chern class of the determinant line bundle of $\mathfrak{s}$ using the lift $\widehat{g}$. Then $c_0$ is a lift of $c(\mathfrak{s})$ to equviariant cohomology. View $\mathbb{C}_1$ as an equivariant line bundle over $\{pt\}$ and let $x \in H^2_G(pt ; \mathbb{Z})$ be the equivariant first Chern class of $\mathbb{C}_1$. Thus $x$ is a generator of $H^2_G(pt ; \mathbb{Z})$. Since $b_1(X) = 0$, it follows from the Borel spectral sequence that any lift of $c(\mathfrak{s})$ to equivariant cohomology is of the form $c_0 + mx$ for some $m \in \mathbb{Z}_p$. The equivariant first Chern class of the determinant line bundle of $\mathfrak{s}$ using the lift $\omega^k \widehat{g}$ is $c_0 + 2k x$. If $p$ is odd, then every class $c_0 + m x$ can be written as $c_0 + 2k x$ for suitable $k$. Hence every lift of $c(\mathfrak{s})$ to equivariant cohomology is realised by some lift of $G$ to $\mathfrak{s}$. If $p=2$ then $c_0 + 2kx = c_0$ for every choice of lift.
\end{proof}

Let $\mathfrak{s}$ be a $G$-equivariant spin$^c$-structure. Over each connected component of the fixed point set, $g$ acts on $L$, the determinant line of $\mathfrak{s}$ with some weight which we denote by $w_i$ for an isolated point $x_i$ and $v_j$ for a fixed surface $\Sigma_j$. Let $Spin \in R[G]$ denote the virtual representation given by the index of the spin$^c$-Dirac operator.

\begin{proposition}\label{prop:lef}
If $p$ is odd then:
\begin{align*}
\chi_{Spin}(g^2) &= \sum_i \frac{ \omega^{w_i} }{(\omega^{a_i} - \omega^{-a_i})(\omega^{b_i} - \omega^{-b_i})} \\
&+ \sum_j \left( \frac{1}{2}\langle c(\mathfrak{s}) , [\Sigma_j] \rangle \frac{ \omega^{v_j} }{(\omega^{c_j} - \omega^{-c_j})} -  \frac{1}{2}[\Sigma_j]^2 \frac{(\omega^{c_j} + \omega^{-c_j}) \omega^{v_j}}{(\omega^{c_j} - \omega^{-c_j})^2} \right).
\end{align*}
\end{proposition}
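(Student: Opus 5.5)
The plan is to apply the Atiyah--Singer Lefschetz fixed point formula \cite{as} to the spin$^c$-Dirac operator of the $G$-equivariant spin$^c$-structure $\mathfrak{s}$ (which exists by Proposition \ref{prop:eqspinc}), evaluated at the element $g^2$ rather than $g$. The reason for using $g^2$ is that the local contributions to $\chi_{Spin}(h)$ at a fixed component naturally involve square roots of the eigenvalues of $h$ on the tangent and determinant line bundles. Indeed, the weights of $g$ on the spinor spaces at a fixed point are of the form $(w_i \pm a_i \pm b_i)/2$. These half-integer weights are only meaningful in $\mathbb{Z}_p$ because $2$ is invertible.

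Since $p$ is odd, $g^2$ is again a generator of $G$ with the same fixed point set. The element $g^2$ acts on $T_{x_i}X$ with weights $(2a_i, 2b_i)$, on $L|_{x_i}$ with weight $2w_i$, on $N_j$ with weight $2c_j$ and on $L|_{\Sigma_j}$ with weight $2v_j$. The required square roots are then canonically $\omega^{\pm a_i}, \omega^{\pm b_i}, \omega^{w_i}, \omega^{\pm c_j}, \omega^{v_j}$. The first key step is to check that these are exactly the eigenvalues of $g$ itself on the relevant spinor and normal data. Concretely, $S^+|_{x_i} \cong \mathbb{C}_{(w_i+a_i+b_i)/2}\oplus \mathbb{C}_{(w_i-a_i-b_i)/2}$ and $S^-|_{x_i} \cong \mathbb{C}_{(w_i+a_i-b_i)/2}\oplus\mathbb{C}_{(w_i-a_i+b_i)/2}$ as $g$-representations. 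I would verify this by writing the spinor bundles at $x_i$ as $\Lambda^{0,*}\otimes K^{1/2}\otimes L^{1/2}$ for the complex structure defined by the weights. The consistency of these half-weights with the lift of $g$ comes from the construction of the lift $\widehat{g}$ in Proposition \ref{prop:eqspinc}, whose determinant action has weight $w_i$.

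The second step is the isolated point contribution. The Lefschetz formula gives $\mathrm{tr}(g^2|S^+_{x_i}) - \mathrm{tr}(g^2|S^-_{x_i})$ divided by $\det(1-g^2|T_{x_i}X)$ (the latter taken with the complex structure). The numerator factors as $\omega^{w_i}(\omega^{a_i}-\omega^{-a_i})(\omega^{b_i}-\omega^{-b_i})$. The denominator equals $(1-\omega^{2a_i})(1-\omega^{-2a_i})(1-\omega^{2b_i})(1-\omega^{-2b_i}) = (\omega^{a_i}-\omega^{-a_i})^2(\omega^{b_i}-\omega^{-b_i})^2$. The quotient is the stated term, and it is invariant under the equivalences $(a_i,b_i)\sim(b_i,a_i)\sim(-a_i,-b_i)$.

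The third step is the surface contribution. The fixed point formula gives
\[
\int_{\Sigma_j} \frac{e^{c_1(L)/2}\,\omega^{v_j}}{\omega^{c_j}e^{n/2}-\omega^{-c_j}e^{-n/2}}\,\hat{A}\text{-type factor of } T\Sigma_j,
\]
where $n = e(N_j)$ and $\int_{\Sigma_j} n = [\Sigma_j]^2$. The $T\Sigma_j$ factor combines with the relevant terms to give the Todd class contribution. I would expand this to first order in the degree-$2$ classes: the coefficient of $c_1(L)$ gives $\tfrac12\langle c(\mathfrak{s}),[\Sigma_j]\rangle\,\omega^{v_j}/(\omega^{c_j}-\omega^{-c_j})$, and the derivative of the denominator in $n$ gives $-\tfrac12[\Sigma_j]^2(\omega^{c_j}+\omega^{-c_j})\omega^{v_j}/(\omega^{c_j}-\omega^{-c_j})^2$. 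The $T\Sigma_j$ (genus) terms should cancel: with the spin$^c$ normalisation the surface contribution involves only $c_1(L)$ and $n$, since $c_1(K_X)|_{\Sigma}$ pieces are absorbed via the adjunction-type splitting $TX|_\Sigma = T\Sigma\oplus N$ in the twisted $\hat{A}$-genus. I expect this cancellation, together with the orientation and sign conventions (the orientation of $\Sigma_j$ versus the sign of $c_j$, and the sign of $S^+-S^-$), to be the main obstacle. I would check these against a linear action on $\mathbb{CP}^2$ or $S^2\times S^2$, where both sides can be computed directly.
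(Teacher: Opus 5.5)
Your proposal is correct. At the isolated points it is the paper's computation in different notation: your half-weights $(w_i\pm a_i\pm b_i)/2$ are exactly the paper's $S^{\pm}|_{x_i}\cong \mathbb{C}_u\otimes S^{\pm}_{can}$ with $w_i=2u+a_i+b_i$. Your trace ratio simplifies to the paper's $\omega^{2u}/((1-\omega^{-2a_i})(1-\omega^{-2b_i}))$. One small slip: the denominator you actually write is $\det_{\mathbb{R}}(1-dg^2)$ on the real tangent space, not a determinant taken with the complex structure, and the real one is the correct choice.

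The genuine difference is at the fixed surfaces.
\begin{itemize}
\item The paper writes $S^{\pm}|_{\Sigma_j}\cong E\otimes\Lambda^{0,\pm}(T\Sigma_j\oplus N_j)$ and uses the holomorphic (Todd-class) form of the Lefschetz formula, $\int_{\Sigma_j}\omega^{2\alpha}e^{c_1(E)}\,\mathrm{Td}(\Sigma_j)/(1-\omega^{-2c_j}e^{-n})$. It then eliminates the genus via $\deg E=g_j-1-\frac12[\Sigma_j]^2+\frac12\langle c(\mathfrak{s}),[\Sigma_j]\rangle$.
\item You use the $\hat{A}$/half-angle form instead. The two integrands are literally equal, because $\mathrm{Td}(T\Sigma)=e^{c_1(T\Sigma)/2}\hat{A}(T\Sigma)$, $1/(1-\omega^{-2c}e^{-n})=\omega^{c}e^{n/2}/(\omega^{c}e^{n/2}-\omega^{-c}e^{-n/2})$, $c_1(L)|_{\Sigma}=2c_1(E)+c_1(T\Sigma)+n$, and $v=2\alpha+c$.
\end{itemize}

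Your form gives a one-line expansion, and it does produce exactly the two stated terms. Note, however, that $\hat{A}(T\Sigma_j)=1$ through degree $2$, so in your formulation there are no genus terms to cancel at all. Your remarks about the Todd class and adjunction mix the two formulations; the cancellation you anticipate is the one that happens in the paper's Todd form.

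The paper's form has the advantage that its signs are unambiguous. The holomorphic Lefschetz formula for $E\otimes\Lambda^{0,*}$ involves no square-root or sign choices, whereas your half-angle integrand needs its normalisation justified.

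So the ``main obstacle'' you flag should not be settled by testing on linear actions on $\mathbb{CP}^2$. Such a test fixes a global convention but does not prove the general local formula. Settle it instead by the identity above, i.e.\ by deriving your half-angle integrand from the twisted Dolbeault one. That derivation is precisely the content of the paper's proof.
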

\begin{proof}
Since $g$ is assumed to have odd order $p$, the fixed point sets of $g$ and $g^2$ coincide. The Lefschetz index theorem \cite{as} expresses $\chi_{Spin}(g^2)$ as a sum of contributions from the components of the fixed point set of $g$. 

Consider first the contribution from an isolated point $x$ with normal weights $(a,b)$. A neighbourhood $U_x$ of this point can be identified with $\mathbb{C}^2$ with $g$ acting as $g(x,y) = ( \omega^a x , \omega^b y)$. The action of $g$ preserves a complex structure (in a neighbourhood of $x$). Let $S^{\pm}_{can}$ denote the canonical spin$^c$-structure associated to this complex structure, so $S^+_{can} = \mathbb{C} \oplus \mathbb{C}_{a+b}$ and $S^-_{can} = \mathbb{C}_a \oplus \mathbb{C}_b$. Let $S^{\pm}$ denote the spinor bundles associated to $\mathfrak{s}$. Then $S^{\pm}|_U \cong E \otimes S^{\pm}_{can}$ for some equivariant line bundle $E$. Then $E$ must have the form $E \cong \mathbb{C}_{u}$ for some $u \in \mathbb{Z}_p$. Taking determinants gives $L|_{U_x} = det(S^+|_{U_x}) \cong E^2 \otimes det(S^+_{can}) \cong \mathbb{C}_{2u+a+b}$. So if $w$ denotes the weight of $g$ acting on $L|_x$, then $w = 2u+a+b$.

The contribution to $\chi_{Spin}(g^2)$ coming from $x$ depends only on the normal weights $(a,b)$ and on the restriction to $x$ of the symbol of the spin$^c$-Dirac operator. Using $S^{\pm}|_{U_x} \cong E \otimes S^{\pm}_{can}$, the symbol agrees with that of the Dolbeault complex twisted by $E$. Hence the local contribution is:
\[
\frac{ \omega^{2u} }{(1-\omega^{-2a})(1-\omega^{-2b})} = \frac{ \omega^{2u+a+b} }{(\omega^a - \omega^{-a})(\omega^b - \omega^{-b})} = \frac{ \omega^w }{(\omega^a - \omega^{-a})(\omega^b - \omega^{-b})}.
\]

Consider now the contribution from a surface $\Sigma \subset X$ of genus $g$. Choose an orientation on $\Sigma$. This induces an orientation on the normal bundle $N$ such that $TX|_\Sigma = T\Sigma \oplus N$ as oriented bundles. Moreover $N$ is an oriented rank $2$ vector bundle on $\Sigma$ so we can regard it as a complex line bundle. The degree of $N$ is $[\Sigma]^2$. We have that $g$ acts on $N$ with some non-zero weight $c$. Since $T\Sigma$ and $N$ are complex line bundles, this defines a complex structure on $TX|_{\Sigma}$. Let $S^{\pm}_{can}$ denote the spinor bundles of the corresponding canonical spin$^c$-structure. Then $S^{\pm}|_{\Sigma} \cong E \otimes S^{\pm}_{can}$ for some complex line bundle $E$. Let $e$ denote the degree of $E$ and let $\alpha$ denote the weight of the action of $g$ on $E$. Then since $L|_\Sigma = det(S^{\pm})|_\Sigma = E^2 \otimes det(S^{\pm}_{can})$, we get
\begin{equation}\label{equ:degL}
deg(L|_\Sigma) = 2e + deg(N) + deg(T\Sigma) = 2e + [\Sigma]^2 + 2-2g,
\end{equation}
and
\[
v = 2\alpha + c
\]
where $v$ denotes the weight of $g$ acting on $L|_\Sigma$. Noting that $deg(L|_\Sigma) = \langle c(\mathfrak{s}) , [\Sigma] \rangle$, Equation (\ref{equ:degL}) can be re-written as
\[
e = g-1 - \frac{1}{2}[\Sigma]^2 + \frac{1}{2}\langle c(\mathfrak{s}) , [\Sigma] \rangle.
\]

Since the symbol of the spin$^c$-Dirac operator restricted to $\Sigma$ agrees with the symbol of the Dolbeault complex twisted by $E$, the local contribution to $\chi_{Spin}(g^2)$ coming from $\Sigma$ is:
\[
\int_\Sigma \frac{ Ch(E)(g^2) \cdot \mathcal{U} \cdot Td(\Sigma) }{1-\omega^{-2c} }
\]
where
\[
Ch(E)(g^2) = \omega^{2\alpha}(1 + c_1(E) ) = \omega^{v-c}(1+c_1(E))
\]
and
\[
\mathcal{U} = \left( \frac{1 - \omega^{-2c}e^{-c_1(N)} }{1 - \omega^{-2c} }\right)^{-1} = \left( 1 + \frac{ \omega^{-2c} c_1(N) }{1-\omega^{-2c}} \right)^{-1} = 1 - \frac{ \omega^{-2c} c_1(N) }{1-\omega^{-2c}}.
\]
So we find the contribution from $\Sigma$ is
\[
\frac{1}{2} \langle c(\mathfrak{s}) , [\Sigma] \rangle \frac{ \omega^v}{(\omega^c - \omega^{-c})} - \frac{1}{2}[\Sigma]^2 \frac{(\omega^c + \omega^{-c})}{(\omega^c - \omega^{-c})^2} \omega^v.
\]
Summing up all the contributions from points and surfaces gives the result.

\end{proof}

Let $d_u$ denote the multiplicity of $\mathbb{C}_u$ in $D$. Then
\[
Spin(g^2) = \sum_u d_u \omega^{2u} = \sum_u d_{2'u} \omega^u
\]
where $2'$ denote the multiplicative inverse of $2$ mod $p$.

Using same lifting and reducing procedure as in the case of the $G$-signature theorem, but applied to the spin $^c$-Lefschetz index formula, we have:
\begin{align}\label{equ:gspin1}
(t-1)^2 \sum_{u=0}^{p-1} d_{2'u} t^u &=\sum_i t^{w_i} t^{a_i+b_i} \psi_{2a_i}(t) \psi_{2b_i}(t) \\
& + \sum_j \frac{1}{2}\langle c(\mathfrak{s}) , [\Sigma_j] \rangle t^{v_j}(t-1) t^{c_j} \psi_{2c_j}(t) \nonumber \\
&  - \sum_j \frac{1}{2}[\Sigma_j]^2 t^{v_j} (t^{3c_j} + t^{c_j}) \psi_{2c_j}(t)^2 \text{ in } \mathbb{Z}_p[x]/(x^5). \nonumber 
\end{align}

As before, we will expand both sides in terms of $x$ and equate coefficients. We have
\begin{align*}
t^{a}\psi_{2a}(t) &= \frac{1}{2a} + \frac{1}{4a}x - \frac{(2a^2+1)}{24a}x^2 + \frac{(2a^2+1)}{48a}x^3 \\
& \quad \quad + \frac{(14a^4 - 40a^2-19)}{1440a}x^4 \text{ in } \mathbb{Z}_p[x]/(x^5).
\end{align*}

Multiplying together two such expansions we get
\begin{align*}
t^{a+b}\psi_{2a}(t)\psi_{2b}(t) &= \frac{1}{4ab} + \frac{1}{4ab}x + \frac{1}{48}\left(\frac{-2a^2-2b^2+1}{ab}\right)x^2 \\
& \quad \quad +\frac{1}{2880}\left( \frac{ 14(a^4 + b^4) +20 a^2b^2 - 3}{ab} \right)x^4 \text{ in } \mathbb{Z}_p[x]/(x^5).
\end{align*}

We also have
\begin{align*}
(t^{3c} + t^c)\psi_{2c}^2 &= \frac{1}{2c^2} + \frac{1}{2c^2}x + \frac{(2c^2+1)}{24c^2}x^2 \\
& \quad \quad - \frac{(14c^4+1)}{480c^2}x^4 \text{ in } \mathbb{Z}_p[x]/(x^5).
\end{align*}

For the Lefschetz index theorem, we will need to expand $x^2 \sum_u d_{2'u} t^{u}$ up to order $x^4$:
\[
x^2 \sum_u d_{2'u} t^u = x^2 d +  \sum_u d_{2'u}\left( ux^3 + \binom{u}{2}x^4 \right) \text{ in } \mathbb{Z}_p[x]/(x^5)
\]
where $d = \sum_{u=0}^{p-1} d_u = (c(\mathfrak{s})^2 - \sigma(X))/8$ is the index of the Dirac operator.

Equating coefficients, we obtain the following identities:
\begin{equation}\label{equ:dirx1}
\sum_i  \frac{w_i}{a_i b_i} + \sum_j  \frac{\langle c(\mathfrak{s}) , [\Sigma_j] \rangle}{c_j} - \sum_j \frac{[\Sigma_j]^2 v_j}{c_j^2} = 0
\end{equation}

valid for all odd $p$.

\begin{equation}\label{equ:dirx2}
c(\mathfrak{s})^2 = \sum_i \frac{w_i^2}{a_i b_i} + 2\sum_j \frac{\langle c(\mathfrak{s}) , [\Sigma_j] \rangle v_j}{c_j} - \sum_j \frac{[\Sigma_j]^2 v_j^2}{c_j^2}
\end{equation}

valid for all $p \ge 5$. In fact, we will show in Section \ref{sec:loc} that this also holds when $p=3$.

\begin{align}
24 \sum_u d_{2'u} u &= \sum_i \frac{( w_i^3 - w_i(a_i^2 + b_i^2))}{a_ib_i} + \sum_j \langle c(\mathfrak{s}) , [\Sigma_j] \rangle \frac{( 3v_j^2 - c_j^2)}{c_j} \label{equ:dirx3} \\
& - \sum_j [\Sigma_j]^2 \frac{( v_j^3 + v_j c_j^2)}{c_j^2} \nonumber
\end{align}
valid for all $p \ge 5$.

\begin{align}
720 \sum_u d_{2'u} u^2 &= \sum_i \frac{( 15 w_i^4 - 30 w_i^2(a_i^2 + b_i^2) - 4a_i^2 b_i^2 + 7(a_i^2 + b_i^2)^2 )}{a_ib_i} \label{equ:dirx4} \\
&+ 60\sum_j \langle c(\mathfrak{s}) , [\Sigma_j] \rangle \frac{( v_j^3  - v_j c_j^2 )}{c_j} \nonumber \\
&+ \sum_j [\Sigma_j]^2 \frac{( -15 v_j^4 - 30v_j^2 c_j^2 + 21 c_j^4 )}{c_j^2} \nonumber
\end{align}

valid for all $p \ge 7$.

\section{Localisation}\label{sec:loc}

Let $G = \mathbb{Z}_p = \langle g \rangle$ ($p$ an odd prime) act smoothly on a closed $4$-manifold $X$. Let $x$ denote a generator of $H^2_{\mathbb{Z}_p}(pt ; \mathbb{Z})$, then $H^*_{\mathbb{Z}_p}(pt ; \mathbb{Z}) \cong \mathbb{Z}[x]/( px )$. The localised cohomology ring $x^{-1}H^*_{\mathbb{Z}_p}(pt ; \mathbb{Z})$ is isomorphic to $\mathbb{Z}_p[x,x^{-1}]$. 

Let $H^*_{loc}(X) = x^{-1} H^{ev}_{\mathbb{Z}_p}(X ; \mathbb{Z})$ be the even part of the equivariant cohomology ring of $X$ localised with respect to $x$. Then $H^*_{loc}(X) \cong H^0_{loc}(X)[x,x^{-1}]$.

Let $F$ denote the fixed point set of $g$. Then $F$ consists of $n$ isolated points $x_1, \dots , x_n$ and $m$ surfaces $\Sigma_1, \dots , \Sigma_m$. Since $p$ is odd, the surfaces $\Sigma_j$ are all orientable and we fix orientations of each component. Let $\iota : F \to X$ be the inclusion map. The localisation theorem in equivariant cohomology \cite[III (3.8)]{die} implies that $\iota^* : H^{ev}_{loc}(X) \to H^{ev}_{loc}(F)$ is an isomorphism. Since both rings are algebras over $\mathbb{Z}_p[x,x^{-1}]$, we will mainly consider the degree $0$ part $\iota^* : H^0_{loc}(X) \to H^0_{loc}(F)$.

For an isolated point $x_i$, we have $H^0_{loc}(x_i) \cong \mathbb{Z}_p$. For a surface $\Sigma_j$, we have $H^0_{loc}(\Sigma_j) \cong \mathbb{Z}_p[ \tau ]/(\tau^2)$. Here $\tau = x^{-1} \nu$, where $\nu$ is the generator of $H^2_{\mathbb{Z}_p}(\Sigma_j ; \mathbb{Z}) \cong \mathbb{Z}$ corresponding to the chosen orientation. Hence we have a ring isomorphism
\[
H^0_{loc}(X) \cong \bigoplus_i \mathbb{Z}_p \oplus \bigoplus_j \frac{\mathbb{Z}_p[\tau]}{(\tau^2)}.
\]

Since $X$ and $F$ are oriented, various push-forward maps are defined. Namely, let $p : X \to pt$, $\pi : F \to pt$ be the unique maps to a point. These maps are equivariant and $\pi = p \circ \iota$. So we get pushforward maps
\begin{align*}
p_* &: H^{ev}_{\mathbb{Z}_p}(X ; \mathbb{Z}) \to H^{ev}_{\mathbb{Z}_p}(pt ; \mathbb{Z}), \\
\pi_* &: H^{ev}_{\mathbb{Z}_p}(F ; \mathbb{Z}) \to H^{ev}_{\mathbb{Z}_p}(pt ; \mathbb{Z}), \\
\iota_* &: H^{ev}_{\mathbb{Z}_p}(F ; \mathbb{Z}) \to H^{ev}_{\mathbb{Z}_p}(X ; \mathbb{Z}),
\end{align*}

satisfying $\pi_* = p_* \circ \iota_*$. These maps commute with $x$ and so induce corresponding maps on $H^{ev}_{loc}$. The maps change degrees, however we would like to work throughout with just $H^0_{loc}$. To do this we define adjusted maps
\begin{align*}
P &: H^0_{loc}(X) \to H^0_{loc}(pt) = \mathbb{Z}_p, \\
\Pi &: H^0_{loc}(F) \to H^0_{loc}(pt) = \mathbb{Z}_p, \\
I &: H^0_{loc}(F) \to H^0_{loc}(X)
\end{align*}

as follows. For $P$ we simply take $P = x^2 p_*$. For $\Pi$ and $I$ we first write $F = F_0 \cup F_2$, where $F_0$ are the isolated points and $F_2$ are the surfaces. Then $\pi_* = (\pi_0)_* + (\pi_2)_*$, where $\pi_0 : F_0 \to pt$, $\pi_2 : F_2 \to pt$ are the unique maps to $pt$. We then set $\Pi = \Pi_0 + \Pi_2$, where $\Pi_0 = (\pi_0)_*$ and $\Pi_2 = x(\pi_2)_*$. Similarly, set $\iota_0 = \iota|_{F_0}$, $\iota_2 = \iota|_{F_2}$ and then set $I = x^{-2} (\iota_0)_* + x^{-1} (\iota_2)_*$. Clearly $\Pi_0 = P \circ I_0$ and $\Pi_2 = P \circ I_2$, so $\Pi = P \circ I$.

Let $N$ denote the normal bundle of $F$ (so $N$ has rank $4$ over $F_0$ and rank $2$ over $F_2$). Then $\iota^*( \iota_*( u ) ) = e(N) u $ for any $u \in H^{ev}_{loc}(F)$, where $e(N)$ is the Euler class of $N$. Now if $x_i$ is an isolated point with normal weights $(a_i,b_i)$, then $e(N)|_{x_i} = a_i^2 b_i^2 x^4$. If $\Sigma_j$ is a fixed surface with self-intersection $\delta_j = [\Sigma_j]^2$ and normal weight $c_j$, then $e(N)|_{\Sigma_j} = c_jx + \delta_j \nu$. The Euler class of $N$ is invertible in $H^{ev}_{loc}(F)$. In fact, we have
\[
e(N)|_{x_i}^{-1} = a_i^{-1} b_i^{-1} x^{-2}, \quad e(N)|_{\Sigma_j}^{-1} = c_j^{-1} x^{-1} - \delta_j c_j^{-2} x^{-2} \nu.
\]

Let $\omega \in H^{ev}_{loc}(X)$. Then by the localisation theorem $\omega = \iota_*(\omega_F)$ for some $\omega_F \in H^{ev}_{loc}(F)$. Then $\iota^*(\omega) = \iota^*( \iota_*(\omega_F)) = e(N) \omega_F$. Since $e(N)$ is invertible, this gives $\omega_F = e(N)^{-1} \iota^*(\omega)$. Therefore,
\[
p_*(\omega) = p_*( \iota_*(\omega_F) ) = \pi_*(\omega_F) = \pi_*( e(N)^{-1} \iota^*(\omega) ).
\]
Set $N_i = N|_{F_i}$ for $i=0,2$. Then we can write
\[
p_*(\omega) = (\pi_0)_*( e(N_0)^{-1} \iota_0^*(\omega) ) + (\pi_2)_*( e(N_2)^{-1} \iota_2^*(\omega)).
\]

Define $\alpha_0 = x^2 e(N_0)^{-1}$, $\alpha_2 = x e(N_2)^{-1}$. So $\alpha = \alpha_0 + \alpha_2 \in H^0_{loc}(F)$ and it follows that
\[
P(\omega) = \Pi_0( \alpha_0 \omega_0) + \Pi_2( \alpha_2 \omega_2),
\]
where $\omega_i = \iota^*_i(\omega)$ for $i=0,2$. Note that
\[
\alpha|_{x_i} = a_i^{-1} b_i^{-1}, \quad \alpha|_{\Sigma_j} = c_j^{-1} - \delta_j c_j^{-2} \tau.
\]

Now $\Pi_0( w_1 , \dots , w_n) = w_1 + \cdots + w_n$ and $\Pi_2( (v_1 + u_1 \tau) , \dots , (v_m + u_m \tau) ) = u_1 + \cdots + u_m$. So if $\omega_0 = (\omega_1 , \dots , \omega_n)$, $\omega_2 = ( (v_1 + u_1 \tau) , \dots , (v_m + u_m \tau))$, then
\[
P(\omega) = \sum_i \frac{ \omega_i }{a_ib_i} - \sum_j [\Sigma_j]^2 \frac{v_j}{c_j^2} + \sum_j \frac{u_j}{c_j}.
\]

Now for example, if we take $\omega = 1$, then since $p_*(1) = 0$ (for degree reasons), we get that
\[
0 = P(1) = \sum_i \frac{1}{a_ib_i} - \sum_j [\Sigma_j]^2 \frac{1}{c_j^2}.
\]

We recognise this as Equation (\ref{equ:gsigp1}), which we obtained in Section \ref{sec:index} from the $G$-signature theorem. Here we have given an alternative proof. 

Next, let $L \to X$ be an equivariant line bundle and set $\alpha = c_1(L) \in H^2_{\mathbb{Z}_p}(X ; \mathbb{Z})$. Then $p_*(\alpha) = 0$ for degree reasons. On the other hand, if we define $w_i,v_i,u_i \in \mathbb{Z}_p$ by $\alpha|_{x_i} = w_i x$, $\alpha|_{\Sigma_j} = v_j x + u_j\tau$, then
\begin{equation}\label{equ:wtlineb}
0 = P(x^{-1}\alpha) = \sum_i \frac{ w_i }{a_i b_i } - \sum_j [\Sigma_j]^2 \frac{v_j}{c_j^2} + \sum_j u_j \frac{1}{c_j}.
\end{equation}

Moreover, we observe that $w_i,v_i$ are the weights of $L|_{x_i}$, $L|_{\Sigma_j}$ and $u_j = \langle [L] , [\Sigma_j] \rangle$.

We recognise this as one of the equations obtained from the spin$^c$ Lefschetz index theorem (in the special case that $\alpha$ is characteristic). Here we have proven it for all equivariant line bundles. If we take $L$ to be the determinant line of an equivariant spin$^c$-structure $\mathfrak{s}$, then Equation (\ref{equ:wtlineb}) gives
\[
0 = \sum_i \frac{w_i}{a_ib_i} - \sum_j [\Sigma_j]^2 \frac{v_j}{c_j^2} + \sum_j \frac{ \langle c(\mathfrak{s}) , [\Sigma_j] \rangle}{c_j},
\]
which is precisely Equation (\ref{equ:dirx1}).

We remark here that a different proof of Equation (\ref{equ:wtlineb}) in the case of homologically trivial actions was given in \cite{ah} using the twisted signature operator.

Let $L_1,L_2$ be two equivariant line bundles. Set $\alpha_i = c_1(L_1) \in H^2_{\mathbb{Z}_p}(X ; \mathbb{Z})$ for $i=1,2$. Then $\alpha_1 \alpha_2$ has degree $4$, so $P(\alpha_1 \alpha_2/x^2) = p_*(\alpha_1 \alpha_2 ) = \langle [L_1] , [L_2] \rangle$. Hence we have
\[
\langle [L_1] , [L_2] \rangle = \sum_i \frac{ w_{1,i} w_{2,i} }{a_i b_i} - \sum_j [\Sigma_j]^2 \frac{v_{1j} v_{2j} }{c_j^2} + \sum_j \frac{v_{1j}u_{2j} + v_{2j}u_{1j}}{c_j},
\]
where $w_{a,i},v_{a,j}$ are the weights of $L_a$, and $u_{aj} = \langle [L_a] , [\Sigma_j] \rangle$. Specialising to the case where $L_1 = L_2$ are the determinant line bundle of an equivariant spin$^c$-structure $\mathfrak{s}$, we obtain
\[
c(\mathfrak{s})^2 = \sum_i \frac{ w_{i}^2 }{a_i b_i} - \sum_j [\Sigma_j]^2 \frac{v_{j}^2}{c_j^2} + 2\sum_j \frac{\langle c(\mathfrak{s}) , [\Sigma_j] \rangle v_j}{c_j},
\]
which is precisely Equation (\ref{equ:dirx2}). This shows that Equation (\ref{equ:dirx2}) holds even when $p=3$.

We use $\iota^*$ to identify $H^0_{loc}(F)$ with $H^0_{loc}(X)$ and we use $H$ to denote either of them. We also define $\langle \; \rangle : H \to \mathbb{Z}_p$ to be $\langle f \rangle = P(f)$. We define a bilinear form $\langle \, , \, \rangle $ on $H$ by setting $\langle f , g \rangle = \langle fg \rangle$. Since $\langle f , g \rangle = P(fg) = x^2 p_*(fg)$, the bilinear form is non-degenerate as a consequence of Poincar\'e duality. Equipped with this structure, $H$ is a Frobenius ring.

Let $p_1 = p_1(X)$ be the first Pontryagin class of $X$. Because $G$ acts on $TX$, $p_1$ lifts canonically to equivariant cohomology. Set $\rho = x^{-2}p_1$ so that $\rho \in H$. At $x_i$ the tangent bundle of $X$ is $\mathbb{C}_{a_i} \oplus \mathbb{C}_{b_i}$, so $p_1|_{x_i} = (a_ix)^2 + (b_ix)^2$. On $\Sigma_j$, we have $TX|_{\Sigma_j} = N_{\Sigma_j} \oplus T\Sigma_j$, where $N_{\Sigma_j}$ is the normal bundle to $\Sigma_j$. Hence 
\[
p_1|_{\Sigma_j} = c_1(N_{\Sigma_j})^2 + c_1(T\Sigma_j)^2 = c_1(N_{\Sigma_j})^2 = (c_j x + \delta_j \nu)^2 = x^2( c_j^2 + 2c_j \delta_j \tau).
\]
So we have
\[
\rho|_{x_i} = a_i^2 + b_i^2, \quad \rho|_{\Sigma_j} = c_j^2 + 2c_j \delta_j \tau.
\]

Now the (ordinary) signature theorem on $X$ says $p_*(p_1) = 3\sigma(X)$. Thus 
\[
\langle \rho \rangle = 3\sigma(X).
\]
Explicitly in terms of $a_i,b_i,c_j, [\Sigma_j]^2$, this becomes
\[
\sum_i \frac{ a_i^2 + b_i^2}{a_ib_i} + \sum_j [\Sigma_j]^2 = 3\sigma(X)
\]
which we recognise as Equation (\ref{equ:gsigp2}). This shows that Equation (\ref{equ:gsigp2}) is valid even for $p=3$.

Let $e(X)$ denote the Euler class of $X$ and set $e = x^{-2}e(X) \in H$. Then 
\[
e|_{x_i} = a_i b_i, \quad e|_{\Sigma_j} = (2-2g_j)c_j \tau.
\]
Then for example $\langle e  \rangle = \chi(X)$, which reads as
\[
\sum_i 1 + \sum_j (2-2g_j) = \chi(X),
\]
which is the Lefschetz fixed point theorem.

From $e$ and $\rho$ we can form tautological classes
\[
\kappa_{a,b} = \langle \rho^a e^b \rangle \in \mathbb{Z}_p.
\]
From the expressions for $e$ and $\rho$, we can easily calculate $\kappa_{a,b}$:
\begin{align*}
\langle \rho^a \rangle &= \sum_i \frac{(a_i^2 + b_i^2)^a}{a_ib_i} + (2a-1)\sum_j [\Sigma_j]^2 c_j^{2a-2} \quad a \ge 1, \\
\langle e^b \rangle &= \sum_i (a_i b_i)^{b-1} \quad b \ge 2, \\
\langle \rho^a e \rangle &= \sum_i (a_i^2 + b_i^2)^a + \sum_j (2-2g_j) c_j^{2a} \quad a \ge 1, \\
\langle \rho^a e^b \rangle &= \sum_i (a_i^2 + b_i^2)^a (a_i b_i)^{b-1} \quad a \ge 1, b \ge 2.
\end{align*}

Equation (\ref{equ:gsigp2}) can be re-written as:
\[
\langle \rho^2 \rangle - 7 \langle e^2 \rangle = -90 \sum_u u^2 \sigma_u
\]
which holds for all $p \ge 7$.

Let $\mathfrak{s}$ be an equivariant spin$^c$-structure and set $w = c(\mathfrak{s})/x \in H$. Equations (\ref{equ:dirx3}) and (\ref{equ:dirx4}) can be rewritten as:
\begin{align}
\langle w^3 \rangle - \langle w \rho \rangle &= 24 \sum_u d_{2'u} u, \quad (p \ge 5).  \label{equ:dirx32} \\
15\langle w^4 \rangle - 30\langle w^2 \rho \rangle + 7\langle \rho^2 \rangle - 4 \langle e^2 \rangle &= 720 \sum_u d_{2'u} u^2, \quad (p \ge 7). \label{equ:dirx42}
\end{align}

\section{Cyclic group actions on definite $4$-manifolds}\label{sec:definite}

In this section we specialise to the case that $X$ is a positive definite $4$-manifold. We assume that $H_1(X ; \mathbb{Z}_2) = 0$. The intersection form on $X$ is diagonal by Donaldson's theorem. Let $G = \mathbb{Z}_p = \langle g \rangle$ act smoothly on $X$ where $p$ is an odd prime. Let $\Lambda = H^2(X ; \mathbb{Z})$ be the intersection form of $X$. We will show that the action of $G$ on $\Lambda$ orthogonally decomposes into a sum of trivial and regular summands. Let $T$ denote the rank $1$ lattice with generator $e$ such that $\langle e , e \rangle = 1$ and let $G$ act trivially. Let $R$ denote the lattice with basis $e_0, e_1 , \dots , e_{p-1}$ with $\langle e_i , e_j \rangle = \delta_{ij}$ and let $G$ act on $R$ by $ge_i = e_{i+1}$ (where the index $i$ is considered modulo $p$).

\begin{proposition}\label{prop:decomp}
There exists $t,r \ge 0$ such that $\Lambda \cong T^t \oplus R^r$ as $G$-modules with bilinear forms.
\end{proposition}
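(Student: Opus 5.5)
Since $\Lambda$ is diagonal by Donaldson's theorem, the plan is to use the classical fact that a positive definite diagonal lattice $\mathbb{Z}^N$ has a \emph{unique} orthonormal basis up to signs and permutation. The vectors of norm $1$ are exactly $\pm \epsilon_1, \dots, \pm\epsilon_N$ for any orthonormal basis $\epsilon_k$, because $\sum n_k^2 = 1$ forces a single $n_k = \pm 1$. Since $g$ acts by isometries, it permutes the set $\{\pm \epsilon_k\}$ of norm-one vectors. It therefore induces a signed permutation: a permutation $\pi$ of the lines $\{\pm\epsilon_k\}$, together with signs, so that $g\epsilon_k = s_k \epsilon_{\pi(k)}$ with $s_k = \pm 1$.

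\textbf{Step 1.} Because $g^p = 1$, $\pi^p = 1$. As $p$ is prime, every orbit of $\pi$ has size $1$ or $p$, so $\Lambda$ splits orthogonally into the spans of the orbits, each preserved by $g$. I then treat each orbit type separately.

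\textbf{Step 2: fixed lines.} If $\pi(k) = k$, then $g\epsilon_k = s_k\epsilon_k$ and $g^p\epsilon_k = s_k^p\epsilon_k = \epsilon_k$. For $p$ odd this forces $s_k = 1$, so the summand is a copy of $T$.

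\textbf{Step 3: orbits of length $p$.} For an orbit $k_0, k_1=\pi(k_0), \dots, k_{p-1}$, set $e_0 = \epsilon_{k_0}$ and $e_i = g^i e_0$ for $0 \le i \le p-1$. Each $e_i$ equals $\pm\epsilon_{k_i}$, so the $e_i$ form an orthonormal basis of the orbit's span. Moreover $g e_{p-1} = g^p e_0 = e_0$, so $g$ acts by the cyclic shift and the summand is isomorphic to $R$. Here $g^p = 1$ is exactly what makes the product of the signs trivial.

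Collecting the summands gives $\Lambda \cong T^t \oplus R^r$ as claimed.

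\textbf{Main obstacle.} The only delicate point is the sign issue in Step 2, which is where oddness of $p$ enters. For $p=2$ a fixed line could carry $g\epsilon = -\epsilon$, which is the cyclotomic summand excluded in the introduction. Under that hypothesis the same argument goes through for $p = 2$. The orbit case in Step 3 needs no parity assumption. I would also note briefly that the hypothesis $H_1(X;\mathbb{Z}_2)=0$ (or $H_1 = 0$) ensures $H^2(X;\mathbb{Z})$ is torsion-free, so that it is really a lattice to which Donaldson's theorem applies.
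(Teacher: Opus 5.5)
Your proof is correct and is essentially the paper's argument. The paper inducts on rank, splitting off the $G$-orbit of one norm-one vector at a time and using $p$ odd to rule out $g^ie=-g^je$ within a length-$p$ orbit; you split off all orbits at once via the signed permutation of $\{\pm\epsilon_k\}$, and because you track orbits of lines rather than vectors, oddness of $p$ is needed only for fixed lines.

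One small correction to your closing aside: $H_1(X;\mathbb{Z}_2)=0$ only rules out the free and $2$-primary parts of $H_1(X;\mathbb{Z})$, so $H^2(X;\mathbb{Z})$ may still have odd torsion. It is $H_1(X;\mathbb{Z})=0$ (as in the main theorem), or passing to $H^2(X;\mathbb{Z})$ modulo torsion, that makes $\Lambda$ a lattice.
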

\begin{proof}
Equip $\mathbb{Z}^b$ with the standard bilinear form. We use induction on $b$ to prove that any orthogonal action of $G$ is isomorphic to an orthogonal direct sum of copies of $T$ and $R$. The result is clearly true if $b = 0$. Now suppose $b > 0$ and the result is true for ranks less than $b$. Let $e \in \mathbb{Z}^b$ be a class with $e^2 = 1$. Consider the orbit $Ge = \{ g^je \; | j = 0,1,\dots , p-1\}$. Since $p$ is prime $|Ge| = 1$ or $p$. If $|Ge| = 1$, then $ge = e$ and $\mathbb{Z}^b$ decomposes equivariantly as $\mathbb{Z}^b = \Lambda_1 \oplus \Lambda_2$, with $\Lambda_1 = \mathbb{Z}e \cong T$ and $\Lambda_2 = e^{\perp}$. Then by induction, $\mathbb{Z}^b \cong T^t \oplus R^r$ for some $t,r$.

If $|Ge| = p$ then $e_0,e_1, \dots , e_{p-1}$ where $e_j = g^j p$ are disctinct classes of norm $1$. If $e_i = -e_j$ for some $i,j$ then $g^i e = -g^j e$ and hence $e = (g^{i})^p e = (-1)^p (g^{j})^p e = -e$, which is impossible. So no two $e_i,e_j$ are proportional to one another which implies that $\langle e_i , e_j \rangle = 0$ for $i \neq j$. Thus $\mathbb{Z}^b$ decomposes equivariantly as $\mathbb{Z}^b = \Lambda_1 \oplus \Lambda_2$ with $\Lambda_1 \cong R$ the span of $e_0, \dots , e_{p-1}$ and $\Lambda_2$ their orthogonal complement. Hence by induction $\mathbb{Z}^b \cong T^t \oplus R^r$ for some $t,r$.
\end{proof}

Let $t,r$ denote the number of summands of $T$ and $R$ in $\Lambda = H^2(X ; \mathbb{Z})$. So $b_2(X) = \sigma(X) = t + pr$. Consider the Borel spectral sequence $E_r^{p,q}$ for the equivariant cohomology of $X$. We have $E_2^{p,q} = H^p( \mathbb{Z}_p ; H^q(X ; \mathbb{Z}) )$. This is concentrated in even degrees, so there are no differentials, $H^0_{loc}(X) \cong \mathbb{Z}_p^{t+2}$ (as abelian groups) and $H^1_{loc}(X) = 0$. Let $F = X^G$ denote the fixed point set. Then $F$ consists of $n$ isolated points $x_1, \dots , x_n$ and $m$ orientable surfaces $\Sigma_1, \dots , \Sigma_m$. From the localisation theorem, it follows that every surface component $\Sigma_j$ has genus zero and $n+2m = t+2$.

Since $H^2(X ; \mathbb{Z}) \cong T^t \oplus R^r$, we may choose an orthonormal basis $e_i , f_{j,k} \in H^2(X ; \mathbb{Z})$ where $1 \le i \le t$, $1 \le j \le r$, $0 \le k \le p-1$ such that the action of $g$ is given by $ge_i = e_i$ for all $i$ and $gf_{j,k} = f_{j, k+1}$ for all $j,k$. Then a basis for $H^2(X ; \mathbb{Z})^G$ is given by $e_i, f_j$, $1 \le i \le t$, $1 \le j \le r$ where we set $f_j = f_{j,0} + f_{j,1} + \cdots + f_{j,p-1}$. Notice that $\langle e_i , e_j \rangle = \delta_{ij}$, $\langle e_i , f_j \rangle = 0$, $\langle f_i , f_j \rangle = p \delta_{ij}$. Set $f = f_1 + \cdots + f_r$. 

From the Borel spectral sequence we obtain a short exact sequence
\[
0 \to H_{\mathbb{Z}_p}^2(pt ; \mathbb{Z}) \to H^2_{\mathbb{Z}_p}(X ; \mathbb{Z}) \to H^2(X ; \mathbb{Z})^G \to 0.
\]
Thus every class in $H^2(X ; \mathbb{Z})^G$ has a lift to $H^2_{\mathbb{Z}_p}(X ; \mathbb{Z})$ and the lift is unique up to addition of a multiple of $x$, the generator of $H^2_{\mathbb{Z}_p}(pt ; \mathbb{Z})$.

Given a class $a \in H^2(X ; \mathbb{Z})^G$, let $\widehat{a} \in H^2_{\mathbb{Z}_p}(X ; \mathbb{Z})$ be a lift to equivariant cohomology. Then $\widehat{a}$ corresponds to an equivariant line bundle. We would like to determine the weights $w_i, v_j$ of this line bundle in terms of $\widehat{a}$. We have that $H^2(X ; \mathbb{Z})^G \cong \mathbb{Z}^{t+r}$ with basis $e_1, \dots , e_t, f_1 , \dots , f_r$. We will show that the problem of determining weights for lifts of the $f_i$ can easily be solved. In contrast determining the weights for lifts of the $e_i$ is highly non-trivial.

Let $F_{j,0}$ denote a line bundle with $c_1(F_{j,0}) = f_{j,0}$. Then $f_j = c_1(F_j)$, where
\[
F_j \cong F_{j,0} \otimes f^*(F_{j,0}) \otimes \cdots \otimes (f^{p-1})^*(F_{j,0}).
\]
At a point $z \in X$, the fibre of $F_j$ is given by
\[
(F_j)_z = (F_{j,0})_{z} \otimes (F_{j,0})_{gz} \otimes \cdots \otimes (F_{j,0})_{g^{p-1}z}.
\]
Consequently, there is a canonical way to make $F_j$ into an equivariant line bundle, namely $g : (F_j)_p \to (F_j)_{gz}$ is given by
\[
g( a_0 \otimes a_1 \otimes \cdots \otimes a_{p-1} ) = a_1 \otimes a_2 \otimes \cdots \otimes a_0
\]
where $a_i \in (F_{j,0})_{g^i z}$. With this definition, it is easily seen that the weights of $F_j$ over each fixed point are zero (if $gz = z$, then we can choose $a_0 = a_1 = \cdots = a_{p-1}$ to get $g(a_0 \otimes \cdots \otimes a_{p-1}) = a_0 \otimes \cdots \otimes a_{p-1}$). Thus the problem of determining the weights for the $f_j$ is completely solved.

Now we turn to the problem of detemining the weights for lifts of the $e_a$. Choose (arbitrarily) lifts $\widehat{e}_a$ of $e_a$ to equivariant cohomology. We also take lifts $\widehat{f}_j$ of $f_j$ to equivariant cohomology using the $G$-action on $F_j$ defined above. We also set $\widehat{f} = \widehat{f}_1 + \cdots + \widehat{f}_r$. Given $\epsilon = (\epsilon_1 , \dots , \epsilon_t) \in \{ 1,-1\}^t$, we set 
\[
c(\epsilon) = \epsilon_1 e_1 + \epsilon_2 e_2 + \cdots + \epsilon_t e_t + f.
\]
Then $c(\epsilon)$ is a characteristic and $c(\epsilon)^2 = b_2(X) = \sigma(X)$. Corresponding to $c(\epsilon)$ is a spin$^c$-structure $\mathfrak{s}(\epsilon)$ with $c( \mathfrak{s}(\epsilon) ) = c(\epsilon)$. By Proposition \ref{prop:eqspinc} we can make $\mathfrak{s}$ into an equivariant spin$^c$-structure whose determinant line has equivariant first Chern class given by $\epsilon_1 \widehat{e}_1 + \cdots + \epsilon_t \widehat{e}_t + \widehat{f}$.

For $1 \le a \le t$, we let $w_a = x^{-1}e_a \in H$ and $z = x^{-1}f \in H$. 

\begin{lemma}
We have $z = 0$.
\end{lemma}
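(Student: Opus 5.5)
The plan is to check that $z$ restricts to zero on every component of the fixed point set $F$. By the localisation theorem, $\iota^* : H^0_{loc}(X) \to H^0_{loc}(F)$ is an isomorphism, and we have already used $\iota^*$ to identify $H$ with $H^0_{loc}(F)$. So vanishing on each component gives $z = 0$.

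Recall from Section \ref{sec:loc} how the restrictions are computed for an equivariant line bundle $L$ with $\alpha = c_1(L) \in H^2_{\mathbb{Z}_p}(X;\mathbb{Z})$. We have $\alpha|_{x_i} = w_i x$ and $\alpha|_{\Sigma_j} = v_j x + u_j \nu$. Here $w_i, v_j$ are the weights of $L$ at $x_i$ and on $\Sigma_j$, and $u_j = \langle [L], [\Sigma_j] \rangle$ reduced mod $p$. Dividing by $x$ gives $(x^{-1}\alpha)|_{x_i} = w_i$ and $(x^{-1}\alpha)|_{\Sigma_j} = v_j + u_j \tau$.

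Apply this to $\widehat{f} = \widehat{f}_1 + \cdots + \widehat{f}_r$, where each $\widehat{f}_k$ is the lift coming from the canonical cyclic-permutation action on $F_k = F_{k,0} \otimes g^*F_{k,0} \otimes \cdots$. We showed above that all weights of each $F_k$ over fixed points are zero. Hence $w_i = 0$ and $v_j = 0$ for $\widehat{f}$, and so $z|_{x_i} = 0$ for every isolated point.

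It remains to show $u_j = \langle f, [\Sigma_j]\rangle \equiv 0 \pmod p$ for each fixed sphere $\Sigma_j$. This is the only step with content, and it is elementary. Since $g$ fixes $\Sigma_j$ pointwise and preserves its orientation, the class $[\Sigma_j]$ lies in $H^2(X;\mathbb{Z})^G$. We can therefore write
\[
[\Sigma_j] = \sum_a \lambda_a e_a + \sum_k \mu_k f_k
\]
with integer coefficients, using the basis $e_1,\dots,e_t,f_1,\dots,f_r$ of invariants. The relations $\langle e_a, f_k\rangle = 0$ and $\langle f_k, f_l\rangle = p\,\delta_{kl}$ then give $\langle f, [\Sigma_j]\rangle = p\sum_k \mu_k \equiv 0 \pmod p$. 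Equivalently, $\langle f_k, c\rangle = p\langle f_{k,0}, c\rangle$ for any invariant class $c$.

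Thus $z|_{\Sigma_j} = 0 + 0\cdot\tau = 0$ for every $j$. Combined with the vanishing at the isolated points, injectivity of $\iota^*$ gives $z = 0$.
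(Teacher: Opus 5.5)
Your proof is correct, but it takes a different route from the paper. The paper never looks at the fixed point set. It argues that since $f_1,\dots,f_r$ come from the regular summands $R^r\subset H^2(X;\mathbb{Z})$, one already has $x\widehat{f}_i=0$ in $H^4_{\mathbb{Z}_p}(X;\mathbb{Z})$, so these classes die once $x$ is inverted.

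You instead compute $z|_F$ and invoke injectivity of $\iota^*$. The zero weights of the canonical lift kill the constant parts at the isolated points and on the spheres. Invariance of $[\Sigma_j]$, together with $\langle f_k,c\rangle=p\langle f_{k,0},c\rangle$, kills the $\tau$-parts.

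The paper's version is shorter and more conceptual, but its one-line justification hides a dependence on the lift. The Borel spectral sequence together with $H^2(\mathbb{Z}_p;\mathbb{Z}[\mathbb{Z}_p])=0$ only gives $x\widehat{f}_i\in\mathbb{Z}_p x^2$, i.e.\ that $x^{-1}\widehat{f}_i$ is a constant. What forces the constant to vanish is the specific choice $F_i=\bigotimes_l (g^l)^*F_{i,0}$. Its equivariant Chern class is the transfer of $f_{i,0}$, so $x\widehat{f}_i=\mathrm{tr}(\mathrm{res}(x)\,f_{i,0})=0$.

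Your argument uses this choice of lift explicitly and needs nothing beyond the restriction formulas of Section 3. The cost is the extra, easy check that $\langle f,[\Sigma_j]\rangle\equiv 0 \pmod p$.
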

\begin{proof}
Indeed, since $f_1, \dots , f_r$ come from the regular representation summands in $H^2(X ; \mathbb{Z})$, we have $xf_i = 0$ for all $i$ and $xf = 0$ as well. Thus on passing to $H^*_{loc}(X)$ we have that $f_i = 0$ for all $i$ and $f=0$.
\end{proof}

By the lemma, this means that $x^{-1}c(\epsilon) = \epsilon_1 w_1 + \cdots + \epsilon_t w_t$ in $H$.

Since $w_a = x^{-1}\widehat{e}_a$, where $\widehat{e}_a$ is a degree $2$ equivariant cohomology class, we have
\[
\langle w_a \rangle = 0 \text{ for all } a.
\]

Since $p_*(\widehat{e}_a \widehat{e}_b) = \langle e_a \smallsmile e_b , [X] \rangle = \delta_{ab}$, it follows that
\[
\langle w_a w_b \rangle = \delta_{ab}.
\]

\begin{proposition}
For any $\epsilon = (\epsilon_1, \dots , \epsilon_t) \in \{ 1,-1\}^t$ we have:
\begin{align}
\langle w(\epsilon)^3 \rangle - \langle \rho w(\epsilon) \rangle &= 0, \quad (p \ge 5). \label{equ:dirx33} \\
15\langle w(\epsilon)^4 \rangle - 30\langle w(\epsilon)^2 \rho \rangle + 7\langle \rho^2 \rangle - 4 \langle e^2 \rangle &= 0, \quad (p \ge 7). \label{equ:dirx43}
\end{align}
Furthermore, for any $\epsilon \in \{ 1,-1\}^t$ and any $a \in \{1,\dots , t\}$ we have
\begin{equation}
60\langle w^4 \rangle - 120 \langle w^2 \rho \rangle + 28\langle \rho^2 \rangle - 16\langle e^2 \rangle = 5\left( \langle w^3 \rangle - \langle w\rho \rangle \right)^2, \quad (p \ge 7). \label{equ:dirx44}
\end{equation}
where $w = 2w_a + w(\epsilon)$.
\end{proposition}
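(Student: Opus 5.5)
The plan is to apply equivariant Seiberg--Witten theory to the equivariant spin$^c$-structures $\mathfrak{s}(\epsilon)$ of Proposition \ref{prop:eqspinc}. Then feed the resulting vanishing of the equivariant index into Equations (\ref{equ:dirx32}) and (\ref{equ:dirx42}).

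\textbf{Step 1: the equivariant index of $\mathfrak{s}(\epsilon)$ vanishes.} We have $c(\epsilon)^2 = \sigma(X)$, so the index $d = (c(\epsilon)^2-\sigma(X))/8$ of the Dirac operator is $0$. Since $X$ is positive definite, $b^+(X) = b_2(X)$. I would reverse orientation, or equivalently work with the convention where $b^+ = 0$ for the relevant Seiberg--Witten theory, exactly as in the equivariant Donaldson theorem.

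The key input is the equivariant analogue of the families argument of \cite{bar,bar2}. Take the $G$-equivariant Bauer--Furuta map, a $G\times S^1$-equivariant map between spheres of $V = D$ (the virtual index representation, as a complex $G$-representation) and of $H^+$. Here $H^+ = 0$ in the definite case, after the orientation convention. Equivariant $K$-theory degree or $S^1$-Euler class considerations then force $D$ to have no negative part. More precisely, each multiplicity $d_u$ must satisfy $d_u \le 0$, or $d_u \ge 0$ depending on convention. Combined with $\sum_u d_u = d = 0$, this gives $d_u = 0$ for all $u$.

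This is the main obstacle. One has to check that the fixed-point version of the Seiberg--Witten argument applies to every isotypic piece $\mathbb{C}_u$. I would do this by applying the tom Dieck/Borsuk--Ulam type argument to the $S^1$-equivariant map restricted to $\mathbb{C}_u$-isotypic pieces: the existence of an $S^1\times G$-map $S^{D_u}\to S^0$ forces $d_u \le 0$ for each $u$.

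\textbf{Step 2: the first two identities.} Once every $d_u$ vanishes, the right-hand sides of (\ref{equ:dirx32}) and (\ref{equ:dirx42}) are zero. Here $w = x^{-1}c(\mathfrak{s}(\epsilon))$, which by the Lemma $z=0$ equals $w(\epsilon) := \epsilon_1w_1+\cdots+\epsilon_tw_t$. This gives (\ref{equ:dirx33}) for $p\ge 5$ and (\ref{equ:dirx43}) for $p\ge7$.

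\textbf{Step 3: the third identity.} Here I would use a spin$^c$-structure $\mathfrak{s}'$ with $c_1 = c(\epsilon) + 2e_a$, which is again characteristic. Its square is $\sigma(X) + 4\epsilon_a + 4$, which equals $\sigma(X)+8$ if $\epsilon_a = 1$; if $\epsilon_a=-1$ we have $c(\epsilon)+2e_a = c(\epsilon')$ with $\epsilon_a$ flipped. So I take $\epsilon_a = 1$, giving index $d = 1$. The Seiberg--Witten constraint from Step 1 then says $D$ is a single one-dimensional representation $\mathbb{C}_{u_0}$. So $d_{2'u} = \delta_{u,2u_0}$.

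Plugging this into (\ref{equ:dirx32}) and (\ref{equ:dirx42}) with $w = 2w_a + w(\epsilon)$ gives $\langle w^3\rangle - \langle w\rho\rangle = 24 s$ and $15\langle w^4\rangle - \cdots = 720 s^2$, where $s = 2u_0$. Eliminating $s$ yields $4(15\langle w^4\rangle-\cdots) = 5\cdot 576 s^2 = 5(24s)^2$, which is (\ref{equ:dirx44}).

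One must check that when $\epsilon_a = -1$ the identity still holds. Replacing $w_a$ by $-w_a$ symmetrically handles this, or the case reduces to Step 2 since then $w = w(\epsilon')$ and both sides vanish. This last point needs a short check; if it fails I would restrict to $\epsilon_a=1$, which is all that is needed later.
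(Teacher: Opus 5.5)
Your proposal is correct and follows the paper's proof essentially step for step. The paper likewise invokes equivariant Seiberg--Witten theory (\cite[Theorem 9.1]{bar}, applied to $X$ with reversed orientation) to get $d_u \ge 0$ for all $u$, and then deduces $d_u = 0$ from $d = 0$. For (\ref{equ:dirx44}) it uses the structure with $c = 2\widehat{e}_a + \sum_b \epsilon_b \widehat{e}_b$ when $\epsilon_a = 1$: the index has rank one, so $\sum_u d_{2'u}u^2 = (\sum_u d_{2'u}u)^2$. It disposes of $\epsilon_a = -1$ exactly by your second option, since both sides then vanish by (\ref{equ:dirx33}) and (\ref{equ:dirx43}).

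The one thing to tidy up is your hedge on the sign. Step 3 genuinely needs $d_u \ge 0$ on the positive definite $X$, since with the opposite sign a rank-one index would be impossible. That inequality is precisely what the cited theorem provides.
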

\begin{proof}
Since $X$ is positive definite, an application of equivariant Seiberg--Witten theory \cite[Theorem 9.1]{bar} (applied to $X$ with the opposite orientation so that it is negative definite) gives that the index $Spin \in R[G]$ of $\mathfrak{s}(\epsilon)$ satisfies $d_u \ge 0$. But since $c(\epsilon)^2 = \sigma(X)$, we have that $d = \sum_u d_u = 0$. Hence $d_u = 0$ for all $u$. Thus Equations (\ref{equ:dirx32}), (\ref{equ:dirx42}), reduce to (\ref{equ:dirx33}), (\ref{equ:dirx43}).

Now let $w = 2w_a + w(\epsilon)$. If $\epsilon_a = -1$ then $w$ is of the form $w = w(\epsilon')$ where $\epsilon'_a = 1$, $\epsilon'_b = \epsilon_b$ for $b \neq a$. In this case (\ref{equ:dirx43}) follows from (\ref{equ:dirx33}) and (\ref{equ:dirx43}). Now assume that $\epsilon_a = 1$. Then there is an equivariant spin$^c$-structure $\mathfrak{s}$ with $c(\mathfrak{s}) = 2\widehat{e_a} + \sum_b \epsilon_b \widehat{e}_b$ such that $w = x^{-1} c(\mathfrak{s}) \in H^*_{loc}(X)$. We have that $c(\mathfrak{s})^2 = \sigma(X) + 8$. Then the corresponding index $Spin \in R[G]$ is a virtual representation of rank $1$, that is, the multiplicities $d_u$ satisfy $\sum_u d_u = 1$. Another application of \cite[Theorem 9.1]{bar} gives $d_u \ge 0$ for all $u$. Therefore there exists a $v \in \mathbb{Z}_p$ such that $d_v = 1$ and $d_u = 0$ for $u \neq v$. Therefore, we have
\[
\sum_u d_{2'u} u^2 = \left( \sum_u d_{2'u} u \right)^2.
\]
Applying Equations (\ref{equ:dirx32}), (\ref{equ:dirx33}), we obtain (\ref{equ:dirx44}).
\end{proof}

Since $b_2(X) = t + pr = t \; ({\rm mod} \; p)$, the signature theorem gives
\begin{equation}\label{equ:sig3t}
\langle \rho \rangle = 3t.
\end{equation}

The identites (\ref{equ:dirx33}), (\ref{equ:dirx43}), (\ref{equ:dirx44}) together with (\ref{equ:sig3t}) motivates us to introduce the following definitions. Let $n,m$ be non-negative integers and let $H_{n,m}$ be the ring
\[
H_{n,m} = \mathbb{Z}_p^n \oplus \left( \frac{\mathbb{Z}_p[\tau]}{(\tau^2)} \right)^m.
\]
Elements of $H_{n,m}$ will be written as $h = ( x_1 , \dots , x_n ; y_1 , \dots , y_m )$, where $x_a \in \mathbb{Z}_p$, $y_j \in \mathbb{Z}_p[\tau]/(\tau^2)$. Sometimes we will use a shorthand notation where we write $h = ( \{ x_i \} ; \{ y_j \} )$.

Let $a_i,b_i,c_j,\delta_j \in \mathbb{Z}_p$, $1 \le i \le n$, $1 \le j \le m$, $a_i,b_i,c_i \neq 0$. Define $\langle \; \; \rangle : H_{n,m} \to \mathbb{Z}_p$ by:
\begin{equation}\label{equ:pairing}
\langle w \rangle = \sum_i \frac{w_i}{a_ib_i} - \sum_j \frac{\delta_j v_j}{c_j^2} + \sum_j \frac{u_j}{c_j}
\end{equation}
where $w = (w_1, \dots , w_n ; (v_1 + u_1\tau), \dots , (v_m + u_m \tau))$. We also define $\rho, e \in H_{n,m}$ by
\begin{equation}\label{equ:p1}
\rho = ( a_1^2 + b_1^2 , \dots , a_n^2 + b_n^2 ; ( c_1^2 + 2c_1\delta_1 \tau) , \dots , (c_m^2 + 2c_m \delta_m \tau) )
\end{equation}
and
\begin{equation}\label{equ:euler}
e = (a_1b_1 , \dots , a_n b_n ; 2c_1 \tau , \dots , 2c_m \tau).
\end{equation}

\begin{definition}\label{def:wtsys}
Let $n,m$ be non-negative integers such that $n+2m \ge 3$. A {\em weight system} of type $(n,m)$ is a collection of elements
\[
a,b \in \mathbb{Z}_p^n, c,\delta \in \mathbb{Z}_p^m, w_1, \dots , w_t \in H_{n,m}
\]
where $t = n+2m-2$ such that the following conditions hold.
\begin{itemize}
\item[(W1)]{$a_i,b_i,c_j \neq 0$ for all $i,j$.}
\item[(W2)]{$\langle 1 \rangle = 0$, where $1$ denotes the identity element of $H_{n,m}$.}
\item[(W3)]{$\langle w_a \rangle = 0$ for all $a$.}
\item[(W4)]{$\langle w_a w_b \rangle = \delta_{ab}$ for all $a,b$ ($\delta_{ab} = 1$ if $a=b$, $\delta_{ab}=0$ otherwise).}
\item[(W5)]{$\langle \rho \rangle = 3t$.}
\item[(W6)]{$\langle w(\epsilon)^3 \rangle - \langle \rho w(\epsilon) \rangle = 0$ for all $\epsilon$.}
\item[(W7)]{$15\langle w(\epsilon)^4 \rangle - 30\langle w(\epsilon)^2 \rho \rangle + 7\langle \rho^2 \rangle - 4 \langle e^2 \rangle = 0$ for all $\epsilon$.}
\item[(W8)]{For any $a,\epsilon$, if $w = 2w_a + w(\epsilon)$, then
\[
60\langle w^4 \rangle - 120\langle w^2 \rho \rangle + 28\langle \rho^2 \rangle - 16 \langle e^2 \rangle = 5( \langle w^3 \rangle - \langle \rho w \rangle )^2.
\]
}
\end{itemize}

In the above list, $\langle \; \rangle$ is given by Equation (\ref{equ:pairing}), $\rho, e \in H_{n,m}$ are given by Equations (\ref{equ:p1}), (\ref{equ:euler}) and $w(\epsilon)$ is given by
\[
w(\epsilon) = \epsilon_1 w_1 + \cdots + \epsilon_t w_t,
\]
where $\epsilon_1, \dots , \epsilon_n \in \{ 1,-1\}$.

\end{definition}

\begin{remark}
When $p=2$ or $3$, Definition \ref{def:wtsys} is not entirely satisfactory and we have to use a modified set of axioms. See Section \ref{sec:smallp} for the modified definitions.
\end{remark}

Define an equivalence relation $\sim$ on pairs $(a,b) \in \mathbb{Z}_p^*$ by declaring that $(a,b) \sim (b,a) \sim (-a,-b) \sim (-b,-a)$. It is straightforward to check that the values of $a^2+b^2$ and $ab$ completely determine $(a,b)$ up to equivalence. If $(a,b,c,\delta,w)$ is a weight system, then replacing any pair $(a_i,b_i)$ by an equivalent pair still gives a weight system.

We have proven that for a prime $p \ge 7$, a $\mathbb{Z}_p$-action on a definite $4$-manifold $X$ with $H_1(X ; \mathbb{Z}) = 0$ gives rise to a weight system. In this case, $\delta_j = [\Sigma_j]^2$ and the weight vectors $w_a$ have the form $w_a = \widehat{e}_a/x$, where $\widehat{e}_1, \dots , \widehat{e}_t$ are lifts of $e_1, \dots e_t$, an orthonormal basis for the $T^t$ summand of $H^2(X ; \mathbb{Z}) \cong T^t \oplus R^r$. We write the weight vectors as
\[
w_a = ((w_a)_1, \dots , (w_a)_n ; (v_a)_1 + (u_a)_1 \tau , \dots (v_a)_m + (u_a)_m \tau ).
\]
Then $\{ (w_a)_i , (v_a)_j \}$ are the weights of the equivariant line bundle $E_a$ corresponding to $\widehat{e}_a$ and the $(u_a)_j$ are given by $(u_a)_j = \langle e_a , [\Sigma_j] \rangle$.

\begin{lemma}
Let $(a,b,c,\delta,w_a)$ be a weight system of type $(n,m)$. For any $a$, replacing $w_a$ by $w_a + 1$ gives another weight system.
\end{lemma}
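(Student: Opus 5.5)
The plan is to check the axioms of Definition \ref{def:wtsys} one by one for the new collection $w'_b$, where $w'_a = w_a + 1$ and $w'_b = w_b$ for $b \neq a$. The data $a,b,c,\delta$ do not change, so $\rho$, $e$ and the pairing $\langle\;\rangle$ are unchanged, and (W1), (W2), (W5) hold automatically. Geometrically this is just replacing the lift $\widehat{e}_a$ by $\widehat{e}_a + x$, but the argument will be purely algebraic, using only linearity of $\langle\;\rangle$ and commutativity of $H_{n,m}$.

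For (W3) we have $\langle w_a + 1\rangle = \langle w_a\rangle + \langle 1\rangle = 0$ by (W2) and (W3). For (W4), when $b \neq a$ we get $\langle (w_a+1)w_b\rangle = \langle w_aw_b\rangle + \langle w_b\rangle = 0$. Also
\[
\langle (w_a+1)^2\rangle = \langle w_a^2\rangle + 2\langle w_a\rangle + \langle 1\rangle = 1 .
\]

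For (W6)--(W8) the key observation is that every element to be tested shifts by a constant. Each new $w'(\epsilon)$ equals $w(\epsilon) + \epsilon_a$. Each new $2w'_b + w'(\epsilon)$ equals $2w_b + w(\epsilon) + \lambda$, where $\lambda \in \mathbb{Z}_p$ is a constant. So I will compute how the cubic and quartic expressions change under $w \mapsto w + \lambda$. Set
\[
f_3(w) = \langle w^3\rangle - \langle \rho w\rangle, \qquad Q(w) = 15\langle w^4\rangle - 30\langle w^2\rho\rangle + 7\langle \rho^2\rangle - 4\langle e^2\rangle .
\]
Expand binomially and use $\langle 1\rangle = 0$, $\langle \rho\rangle = 3t$, and assume $\langle w\rangle = 0$. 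This gives
\[
f_3(w+\lambda) = f_3(w) + 3\lambda\bigl(\langle w^2\rangle - t\bigr),
\]
\[
Q(w+\lambda) = Q(w) + 60\lambda f_3(w) + 90\lambda^2\bigl(\langle w^2\rangle - t\bigr).
\]

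Now apply these formulas in the two relevant cases. For $w = w(\epsilon)$, (W3) and (W4) give $\langle w\rangle = 0$ and $\langle w^2\rangle = t$. Hence $f_3$ and $Q$ are both unchanged, which gives (W6) and (W7) for the new system.

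For (W8), take $w = 2w_b + w(\epsilon)$, so $\langle w\rangle = 0$. If $\epsilon_b = -1$, then $w$ is itself of the form $w(\epsilon')$, and (W8) reduces to (W6) and (W7) exactly as in the paper. If $\epsilon_b = 1$, then $\langle w^2\rangle = t + 8$. In that case $f_3$ shifts by $24\lambda$ and $Q$ shifts by $60\lambda f_3 + 720\lambda^2$. Multiplying the $Q$-shift by $4$ gives
\[
4Q(w+\lambda) = 4Q(w) + 240\lambda f_3(w) + 2880\lambda^2 ,
\]
and expanding the square gives
\[
5 f_3(w+\lambda)^2 = 5f_3(w)^2 + 240\lambda f_3(w) + 2880\lambda^2 .
\]
The two increments agree, so the identity $4Q = 5f_3^2$, which is (W8), is preserved.

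The only real obstacle is this bookkeeping for (W8): one must check that the cross terms match exactly, and one must handle both signs of $\epsilon_b$ as well as the case $b = a$ versus $b \neq a$. The computation above works for an arbitrary shift $\lambda$, which covers all of these cases at once.
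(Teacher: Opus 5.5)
Your proof is correct and is the same direct, axiom-by-axiom verification that the paper leaves as ``straightforward to check''. Your shift formulas for $f_3$ and $Q$ are right, and for (W8) the matching $240\lambda f_3(w) + 2880\lambda^2$ increments on both sides of $4Q = 5f_3^2$ (when $\langle w^2\rangle = t+8$) are exactly what make it go through. The same check also shows that the extra axioms (W9) and (W10), used later for $p=2,3$, survive the shift, since $\langle w_b w_c\rangle = 0$ for $b \neq c$ and the $\tau$-coefficients are unchanged.
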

\begin{proof}
This is straightforward to check.
\end{proof}

We will sometimes refer to the replacement of a weight $w_a$ by $w_a + t_a \cdot 1$, where $t_a \in \mathbb{Z}_p$ as {\em weight shifting}.

We introduce some notation for weight systems that will be useful when considering the connected sum operation described below. Given a weight system $(a,b,c,\delta,w_a)$ of type $(n,m)$, we let $I = \{1, \dots , n\}$, $J = \{1, \dots , m\}$, $K = \{ 1 , \dots , t\}$ be the indexing sets for the points, surfaces and weight vectors. We can think of $\{a_i\},\{b_i\}$ as functions $a,b : I \to \mathbb{Z}_p$, $\{c_j\},\{\delta_j\}$ can be viewed as functions $c, \delta : J \to \mathbb{Z}_p$ and the weights vectors $w_a$ can be viewed as a pair of functions $(w|_I , w|_J)$, where $w|_I : I \to \mathbb{Z}_p$ and $w|_J : J \to \mathbb{Z}_p[\tau]/(\tau^2)$.

Let $(a,b,c,\delta,w_a)$ be a weight system of type $(n,m)$. Fix an index $i \in I$. Then by shifting each weight vector $w_a$ by a multiple of $1$, we can assume that $(w_a)_i = 0$ for each $a \in K$. In this case we think of $i$ as a ``basepoint" and we have normalised the weight vectors to have weight zero over the basepoint. Similarly, if $j \in J$ then by shifting each weight vector $w_a$, we can assume that $(w_a)_j \in \mathbb{Z}_p[\tau]/(\tau^2)$ is a multiple of $\tau$.

Next we consider a connected sum operation. There are two cases corresponding to connected sum at an isolated fixed point and connected sum at a non-isolated fixed point. 

Consider the isolated point case. We will call this a connected sum of type 1. Let $(a_i(1),b_i(1),c_i(1),\delta_i(1),w_a(1)), (a_i(2),b_i(2),c_i(2),\delta_i(2),w_a(2))$ be weight systems of types $(n_1,m_1)$ and $(n_2,m_2)$. Suppose there are indices $i_1 \in \{1, \dots , n_1\}$, $i_2 \in \{1, \dots , n_2\}$ such that:
\[
a_{i_1}(1)b_{i_1}(1) = -a_{i_2}(2)b_{i_2}(2), \quad a_{i_1}^2(1) + b_{i_1}^2(1) = a_{i_2}(2)^2 + b_{i_2}^2(2).
\]
Equivalently, $e(1)_{i_1} = -e(1)_{i_2}$ and $\rho(1)_{i_1} = \rho(2)_{i_2}$. We assume that $(a_i(1)$, $b_i(1)$, $c_i(1)$, $\delta_i(1)$,$w_a(1))$ is based at $i_1$ and $(a_i(2),b_i(2),c_i(2),\delta_i(2),w_a(2))$ is based at $i_2$. Set $I_1 = \{1, \dots , n_1\}$, $I_2 = \{1, \dots , n_2\}$, $J_1 = \{1, \dots , m_1\}$, $J_2 = \{1 , \dots , m_2\}$, $K_1 = \{1, \dots , t_1\}$, $K_2 = \{1 , \dots , t_2\}$ ($t_i = n_i + 2m_i - 2$ for $i=1,2$). Define new index sets:
\begin{align*}
I &= ((I_1 \setminus \{i_1\} ) \times \{1\}) \cup ((I_2 \setminus \{i_2\} ) \times \{2\}), \\
J &= (J_1 \times \{1\}) \cup (J_2 \times \{2\}), \\
K &= (K_1 \times \{1\} ) \cup (K_2 \times \{2\}).
\end{align*}

Then $|I| = n$, $|J| = m$, $|K| = t$, where $n = n_1 + n_2 - 2$, $m = m_1 + m_2$ and $t = t_1 + t_2$ where $t = n+2m-2$. We will construct a weight system of type $(n,m)$. The vectors $a(1), a(2)$ can be combined to form a vector $a = (a_i)_{i \in I}$. Similarly we obtain $b,c,\delta$. The $w_a$ are defined as follows. If $a = (a_1 , 1)$ for some $a_1 \in K_1$, then we take
\begin{align*}
(w_a)_{(i,1)} &= (w_{a_1})_{i} \text{ for } i \in I_1 \setminus \{i_1\}, \\
(w_a)_{(i,2)} &= 0 \text{ for } i \in I_2 \setminus \{i_2\}, \\
(w_a)_{(j,1)} &= (w_{a_1})_{j} \text{ for } j \in J_1, \\
(w_a)_{(j,2)} &= 0 \text{ for } j \in J_2.
\end{align*}

Similarly, if $a = (a_2 , 2)$ for some $a_2 \in K_2$, then we take
\begin{align*}
(w_a)_{(i,1)} &= 0 \text{ for } i \in I_1 \setminus \{i_1\}, \\
(w_a)_{(i,2)} &= (w_{a_2})_{i} \text{ for } i \in I_2 \setminus \{i_2\}, \\
(w_a)_{(j,1)} &= 0 \text{ for } j \in J_1, \\
(w_a)_{(j,2)} &= (w_{a_2})_{j} \text{ for } j \in J_2.
\end{align*}

Then it is straightforward to check that this defines a weight system (which need not be based).

Now we consider connected sum for a non-isolated point. So we have to select surfaces on each side on which to perform the connected sum. That is, we suppose there are indices $j_1 \in J_1$, $j_2 \in J_2$ such that
\[
c_{j_1}(1) = c_{j_2}(2).
\]
We assume $(a_i(1),b_i(1),c_i(1),\delta_i(1),w_a(1))$ is based at $j_1$ and $(a_i(2),b_i(2),c_i(2),\delta_i(2),w_a(2))$ is based at $i_2$. Set
\begin{align*}
I &= (I_1 \times \{1\} ) \cup (I_2 \times \{2\} ), \\
J &= (J_1 \times \{1\} ) \cup ((J_2 \setminus \{j_2\} ) \times \{2\}), \\
K & = (K_1 \times \{1\}) \cup (K_2 \times \{2\}).
\end{align*}

The vectors $a,b,c$ are obtained from $a(1),a(2),b(1),b(2),c(1),c(2)$ in the obvious way. For $\delta$, we set $\delta_{(j,1)} = \delta_j(1)$ if $j \in J_1 \setminus \{j_1\}$, $\delta_{(j,2)} = \delta_j(2)$ if $j \in J_2$ and $\delta_{(j_1,1)} = \delta_{j_1}(1) + \delta_{j_2}(2)$. If $a = (a_1 , 1)$ for some $a_1 \in K_1$, then we set
\begin{align*}
(w_a)_{(i,1)} &= (w_{a_1})_{i} \text{ for } i \in I_1, \\
(w_a)_{(i,2)} &= 0 \text{ for } i \in I_2, \\
(w_a)_{(j,1)} &= (w_{a_1})_{j} \text{ for } j \in J_1, \\
(w_a)_{(j,2)} &= 0 \text{ for } j \in J_2 \setminus \{j_2\}.
\end{align*}

If $a = (a_2 , 2)$ for some $a_2 \in K_2$, then we set
\begin{align*}
(w_a)_{(i,1)} &= 0 \text{ for } i \in I_1, \\
(w_a)_{(i,2)} & = (w_{a_2})_i \text{ for } i \in I_2, \\
(w_a)_{(j,1)} &= 0 \text{ for } j \in J_1 \setminus \{j_1\}, \\
(w_a)_{(j_1,1)} &= (w_{a_2})_{j_2}, \\
(w_a)_{(j,2)} &= (w_{a_2})_j \text{ for } j \in J_2 \setminus \{j_2\}.
\end{align*}

Once again, it is straightforward to verify that this gives a weight system.

\begin{example}\label{ex:cp21}
Let $a,b \in \mathbb{Z}_p$ with $a,b \neq 0$ and $a \neq b$. Then we have a weight system of type $(3,0)$ given by
\begin{align*}
(a_1 , b_1 ) &= (a,b), \\
(a_2 , b_2) &= (b-a,-a), \\
(a_3,b_3) &= (a-b,-b), \\
w = (w_0,w_1,w_2) &= ( 0 , a , b ).
\end{align*}

This is the weight system associated to the linear $\mathbb{Z}_p$-action on $\mathbb{CP}^2$ given by $[x,y,z] \mapsto [x , \omega^a y , \omega^b z]$.
\end{example}

\begin{example}\label{ex:cp22}
Let $c \in \mathbb{Z}_p$ with $c \neq 0$. Then we have a weight system of type $(1,1)$ given by
\begin{align*}
(a_1,b_1) &= (c,c), \\
c_1 &= c, \\
\delta_1 &= 1, \\
w &= ( -c ; \tau ).
\end{align*}

This is the weight system associated to the linear $\mathbb{Z}_p$-action on $\mathbb{CP}^2$ given by $[x,y,z] \mapsto [x , \omega^c y , \omega^c z]$.

\end{example}

We will prove that every weight system is given by a connected sum of weights systems of the types given by Examples \ref{ex:cp21} and \ref{ex:cp22}. We will prove this by induction on $t = n+2m-2$. First consider the base case $t=1$.

\begin{proposition}\label{prop:t1}
Any weight system of type $(n,m)$ with $n+2m = 3$ is the weight system of a linear $\mathbb{Z}_p$-action on $\mathbb{CP}^2$.
\end{proposition}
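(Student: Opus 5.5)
The plan is a direct computation. Since $t=1$, there is a single weight vector $w=w_1$, and $n+2m=3$ leaves two types, $(n,m)=(1,1)$ and $(3,0)$. In each case I will solve the axioms (W2)--(W8) explicitly for the data. The unknowns are the invariants $e_i=a_ib_i$ and $\rho_i=a_i^2+b_i^2$ at each point, since these determine $(a_i,b_i)$ up to the equivalence $\sim$, together with $c_j$, $\delta_j$ and the components of $w$. Weight shifting lets me normalise $w$ at one basepoint. The aim is to show that the solutions are exactly Example \ref{ex:cp22} and Example \ref{ex:cp21}, up to weight shifting, reordering of the fixed points, the equivalence $\sim$, and (for the surface) reversing orientation, i.e. $c\mapsto -c$, $\tau\mapsto-\tau$.

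\emph{Type $(1,1)$.} Normalise at the surface so that $w=(x;u\tau)$.
\begin{itemize}
\item (W2) gives $\delta=c^2/(ab)$.
\item (W3) gives $x/(ab)+u/c=0$.
\item (W4) gives $x^2/(ab)=1$.
\item Together, $ab=x^2$, $x=-u\,ab/c$, hence $ab=c^2/u^2$, $\delta=u^2$ and $x=-c/u$. (Here $u\neq 0$, otherwise $x=0$, contradicting (W4).)
\item (W5) reads $(a^2+b^2)/(ab)+\delta=3$, so $\rho_1/e_1=3-u^2$.
\item (W6) for $\pm w$ gives $x^3/(ab)=x\rho_1/(ab)+cu$.
\end{itemize}
Substituting into (W6) yields a polynomial equation in $u^2$ alone. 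Combined with (W7), and possibly (W8), I expect it to force $u^2=1$. Then $\rho_1=2e_1$, so $a=b$, and $ab=c^2$ gives $a=\pm c$. After possibly flipping the orientation of the surface to make $u=1$, we get $\delta=1$ and $x=-c$, which is exactly Example \ref{ex:cp22}.

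\emph{Type $(3,0)$.} Normalise $w=(0,x_2,x_3)$ and write $e_i,\rho_i$ for the point invariants. The axioms give:
\begin{itemize}
\item (W2): $\sum_i 1/e_i=0$.
\item (W3): $x_2/e_2+x_3/e_3=0$.
\item (W4): $x_2^2/e_2+x_3^2/e_3=1$.
\item (W5): $\sum_i \rho_i/e_i=3$.
\end{itemize}
(W3) and (W4) give $x_2\neq x_3$, with $e_2=x_2(x_2-x_3)$ and $e_3=x_3(x_3-x_2)$. Then (W2) gives $e_1=x_2x_3$. This already matches Example \ref{ex:cp21} with $a=x_2$, $b=x_3$. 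The remaining task is to show $\rho_1=x_2^2+x_3^2$, $\rho_2=x_3^2+(x_2-x_3)^2$ and $\rho_3=x_2^2+(x_2-x_3)^2$, which then pin down the pairs up to $\sim$. (W5), (W6) for $\pm w$ (two equations, odd and even in $\epsilon$) and (W7) give linear equations in the $\rho_i$, plus one equation involving $\sum\rho_i^2/e_i$ and $\sum e_i$. I would parametrise $\rho_i=\rho_i^{\mathrm{std}}+s_i$ and show these equations force $s_i=0$, using (W8) if (W7) is not enough.

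\emph{Expected obstacle.} The main difficulty is this final elimination. The quadratic terms $\langle\rho^2\rangle$ allow spurious solutions mod $p$, so one must check that the system from (W5)--(W8) has only the standard solution. I would first try to show that the linear equations span enough to make the deviations $s_i$ satisfy a single homogeneous quadratic relation, and then use (W8), the genuinely new constraint coming from rank-one indices, to kill it. The same issue arises for the equation in $u^2$ in the $(1,1)$ case.
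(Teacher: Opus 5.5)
Your framework matches the paper's: solve the axioms explicitly, then recover each pair $(a_i,b_i)$ from $e_i=a_ib_i$ and $\rho_i=a_i^2+b_i^2$. Your $(1,1)$ case also closes, and more easily than you anticipate. With $e_1=x^2$ and $c=-ux$, (W6) reads $x=(3-u^2)x-u^2x$, i.e.\ $2x(u^2-1)=0$, so $u^2=1$ follows from (W6) alone. Using the known surface component $c^2+2c\delta\tau$ of $\rho$ in this way is a legitimate shortcut. The paper instead determines $\rho$ only up to a multiple of $1$ and then fixes it with (W7).

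The gap is in the $(3,0)$ case. The step you defer as the ``main difficulty'' is the actual content of the proof, and your description of it is inaccurate in several ways.

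\textbf{Counting equations.} (W6) is odd in $\epsilon$, so $\pm w$ give one equation, not two. (W5) and (W6) are therefore just the two linear conditions $\langle\rho\rangle=3$ and $\langle\rho w\rangle=\langle w^3\rangle$ on $\rho\in H_{3,0}=\mathbb{Z}_p^3$.

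\textbf{The missing observation.} (W2) and (W3) say $\langle 1\cdot 1\rangle=\langle 1\cdot w\rangle=0$, so $1$ spans the common kernel, and the solutions are exactly $\rho=\rho^{\mathrm{std}}+s\cdot 1$. Substitute this into (W7):
\begin{itemize}
\item the $s^2$-coefficient is $7\langle 1\rangle=0$;
\item the $s$-coefficient is $-30\langle w^2\rangle+14\langle\rho^{\mathrm{std}}\rangle=-30+42=12$;
\item the constant term vanishes by direct computation (this is the paper's $12t=12(w_1^2+w_2^2)$).
\end{itemize}
So (W7) is simply $12s=0$. There is no quadratic relation, no spurious solution, and no need for (W8). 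This gives $s=0$ for $p\ge 5$. For $p=3$ you cannot divide by $12$, but there $\rho_i=2$ for every $i$ automatically, since all nonzero squares are $1$; your plan should say so.

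\textbf{Wrong targets.} Your target values of $\rho_2,\rho_3$ are interchanged. (W6) forces $\rho_2-\rho_3=x_2^2-x_3^2$. The correct values are $\rho_2=x_2^2+(x_2-x_3)^2$ and $\rho_3=x_3^2+(x_2-x_3)^2$, coming from the pairs $(-x_2,x_3-x_2)$ and $(-x_3,x_2-x_3)$. The values you wrote violate (W6) unless $x_3=-x_2$, so as stated they cannot be proved.
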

\begin{proof}
There are two cases: $(n,m) = (3,0)$ and $(n,m) = (1,1)$.

{\bf Case $(n,m) = (3,0)$.} The data is $(a_0,b_0), (a_1,b_1), (a_2,b_2), w = (w_0, w_1, w_2)$. We shift so that $w_0 = 0$. Then $\rho = ( p_0 , p_1 , p_2)$ where $\rho_i = a_i^2 + b_i^2$, $e = (e_0 , e_1 , e_2)$ where $e_i = a_i b_i$. For convenience, we set $\alpha_i = e_i^{-1}$. If $f = (f_0 , f_1 , f_2)$, then 
\[
\langle f \rangle = \alpha_0 f_0 + \alpha_1 f_1 + \alpha_2 f_2.
\]
From $(W2)$ we get
\[
\alpha_0 + \alpha_1 + \alpha_2 = 0.
\]
From $(W3)$ we get
\[
\alpha_1 w_1 + \alpha_2 w_2 = 0.
\]
From $(W4)$ we get
\[
\alpha_1 w_1^2 + \alpha_2 w_2^2 = 1.
\]

These equations can be written as
\[
\left[ \begin{matrix} 1 & 1 & 1 \\ 0 & w_1 & w_2 \\ 0 & w_1^2 & w_2^2 \end{matrix} \right] \left[ \begin{matrix} \alpha_0 \\ \alpha_1 \\ \alpha_2 \end{matrix} \right] = \left[ \begin{matrix} 0 \\ 0 \\ 1 \end{matrix} \right].
\]

The determinant of the $3 \times 3$ matrix is $w_1 w_2 (w_1 - w_2)$. We claim this is non-zero. First, if $w_1 = 0$, then from $\alpha_1 w_1 + \alpha_2 w_2 = 0$, we get $\alpha_2 w_2 = 0$. But $\alpha_2 \neq 0$, so $w_2 = 0$. Similarly, if $w_2 = 0$ then $w_1 = 0$. But $w_1 = w_2 = 0$ is impossible, since $\alpha_1 w_1^2 + \alpha_2 w_2^2 = 1$. So $w_1,w_2 \neq 0$. If $w_1 = w_2$, then we would get $w_1(\alpha_1 + \alpha_2) = 0$, so $\alpha_1 + \alpha_2 = 0$ and $1 = w_1^2( \alpha_1 + \alpha_2 ) = 0$, a contradiction. So the above linear system has a unique solution which is easily shown to be
\[
(\alpha_0 , \alpha_1 , \alpha_2) = \left( \frac{1}{w_1 w_2} , \frac{1}{w_1(w_1-w_2)} , \frac{1}{w_2(w_2-w_1)} \right).
\]

From $(W5)$, $(W6)$, we get
\[
\langle \rho \rangle = 3, \quad \langle pw \rangle = \langle w^3 \rangle.
\]
These two equations determine $\rho$ up to the orthogonal complement of $\{1,w\}$, which is spanned by $1$. So we can write $\rho = \rho' + t \cdot 1$ for some $t \in \mathbb{Z}_p$, where $\rho' = (0,\rho'_1 , \rho'_2)$ and $\rho'$ satisfies $\langle \rho' \rangle = 3$, $\langle \rho' w \rangle = \langle w^3 \rangle$. A little algebra shows that
\[
\rho'_1 = w_1^2 - 2w_1 w_2, \quad \rho'_2 = w_2^2 - 2w_1 w_2.
\]

Then we use $(W7)$ to solve for $t$. We have
\[
15 \langle w^4 \rangle - 30 \langle w^2( \rho' + t ) \rangle + 7\langle ({\rho'}^2 + 2t\rho' + t^2) \rangle - 4\langle e^2 \rangle = 0.
\]
One finds $\langle w^4 \rangle = w_1^2 + w_1 w_2 + w_2^2$, $\langle w^2 \rho' \rangle = w_1^2 - w_1 w_2 + w_2^2$, $\langle {\rho'}^2 \rangle = w_1^2 + w_2^2 - 7w_1w_2$ and $\langle e^2 \rangle = w_1^2 + w_2^2 - w_1w_2$. Substituting, we find $12t = 12(w_1^2 + w_2^2)$. So if $p \neq 2,3$, we get $t =w_1^2 + w_2^2$ and hence
\begin{align*}
\rho &= ( w_1^2 + w_2^2 , w_1^2 + (w_1-w_2)^2 , w_2^2 + (w_1 - w_2)^2 ), \\
e &= (w_1 w_2 , w_1(w_1-w_2) , w_2(w_2-w_1) ).
\end{align*}
In fact the above formula for $\rho$ is valid even when $p=2,3$ because when $p=2$ we have $\rho = (0,0,0)$ and when $p = 3$, we have $\rho = (-1,-1,-1)$.

The vectors $\rho$ and $e$ completely determine the pairs $(a_0,b_0), (a_1,b_1), (a_2,b_2)$ up to equivalence. Up to equivalence, we find
\[
(a_0,b_0) = (w_1,w_2), \quad (a_1,b_1) = (-w_1 , w_2-w_1), \quad (a_2,b_2) = (-w_2 , w_1 - w_2).
\]
These are the weights of the linear action on $\mathbb{CP}^2$ given by $[x,y,z] \mapsto [x , \omega^{w_1} y , \omega^{w_2}z]$.

{\bf Case $(n,m) = (1,1)$.} The data is $(a,b), (c,\delta), w = (w_0 ; w_1 + w_2 \tau)$. We shift so that $w_1 = 0$. If $f \in H_{1,1}$ is written as $f = (f_0 ; f_1 + f_2 \tau )$, then 
\[
\langle f \rangle = \alpha_0 f_0 + \alpha_1 f_1 + \alpha_2 f_2,
\]
where
\[
(\alpha_0 , \alpha_1 , \alpha_2 ) = \left( \frac{1}{ab} , -\frac{\delta}{c^2} , \frac{1}{c} \right).
\] 

We have
\[
\rho = ( \rho_0 , c^2 + 2c \delta \tau )
\]
where we set $\rho_0 = a^2 + b^2$, and
\[
e = ( ab ; 2c \tau ).
\]
From $(W2)$, we get $\alpha_0 + \alpha_1 = 0$, so $\alpha_1 = -\alpha_0$.

From $(W3)$, we get
\[
\alpha_0 w_0 + \alpha_2 w_2 = 0
\]
and from $(W4)$, we deduce that $w_0 \neq 0$ and
\[
\alpha_0 = \frac{1}{w_0^2}.
\]
Hence $\alpha_1 = -1/w_0^2$ and $\alpha_2 = -1/(w_0w_2)$. So we have
\[
(\alpha_0 , \alpha_1 , \alpha_2) = \left( \frac{1}{w_0^2} , -\frac{1}{w_0^2} , -\frac{1}{w_0w_2} \right).
\]
From $(W5)$, $(W6)$, we get $\langle \rho \rangle = 3$, $\langle \rho w \rangle = \langle w^3 \rangle = w_0$. These two equations determine $\rho$ up to addition of a multiple of $1$, so if we write $\rho = \rho' + t$ where $\rho' = (\rho'_0 ;  \rho'_2 \tau )$ then $\rho'$ is uniquely determined. We find
\[
\rho'_0 = w_0^2, \quad \rho'_2 = -2w_0w_2.
\]

Now we use $(W7)$ to solve for $t$. We have
\[
15 \langle w^4 \rangle - 30 \langle w^2( \rho' + t ) \rangle + 7\langle ({\rho'}^2 + 2t\rho + t^2) \rangle - 4\langle e^2 \rangle = 0.
\]
We have $\langle w^4 \rangle = w_0^2$, $\langle w^2 \rho' \rangle = w_0^2$, $\langle {\rho'}^2 \rangle = w_0^2$, $\langle e^2 \rangle = w_0^2$. This gives $12t = 12w_0^2$. If $p \neq 2,3$ we get $t = w_0^2$ and thus
\[
\rho = (2w_0^2 ;  w_0^2 - 2w_0 w_2 \tau).
\]
In fact, the above formula for $\rho$ is easily seen to hold even when $p=2,3$. When $p=2$, we have $\rho = (0 ; 1 ) = (2w_0^2 , w_0^2 - 2w_0 w_2 \tau)$. When $p=3$, we have $\rho = ( -1 ; 1 + 2c \delta \tau)$. But $-\delta/c^2 = \alpha_1 = -1/w_0^2$, so $\delta = 1$ (as $c^2 = w_0^2 = 1$ for $\mathbb{Z}_3$). Also $-1/(w_0w_2) = \alpha_2 = 1/c$ gives $c = -w_0 w_2$, hence $\rho = (-1 ; 1+2c\delta \tau) = ( 2w_0^2 ; 1 - 2w_0w_2 \tau)$.

From $ab = \alpha_0^{-1} = w_0^2$, $a^2 + b^2 = 2w_0^2$, we deduce that $(a,b) = (w_0,w_0)$ up to equivalence. 

From $c^2 + 2c\delta \tau = w_0^2 - 2w_0 w_2 \tau$ we deduce that $c^2 = w_0^2$ and $c\delta = -w_0 w_2$. From $\alpha_0 = -\alpha_1$, we deduce that $(1/w_0^2) = \delta/c^2 = \delta/w_0^2$. Hence $\delta = 1$ and $c = -w_0 w_2$. Since $c^2 = w_0^2$ this means $w_2 = \pm 1$. We have shown that
\[
(a,b,c,\delta) = (w_0 , w_0 , -w_2 w_0 , 1 )
\]
where $w_2 = \pm 1$. These are the weights of the linear action on $\mathbb{CP}^2$ given by $[x,y,z] \mapsto [x, \omega^{w_0}y , \omega^{w_0}z]$. The choice of sign $w_2 = 1$ or $w_2 = -1$ correspond to the two possible choices of orientation for the surface component of the fixed point set.

\end{proof}

\section{Decomposition of weight systems}\label{sec:decomp}

Let $(a,b,c,\delta,w)$ be a weight system of type $(n,m)$ with point index set $I$, surface index set $J$ and weight index set $K$. We assume $|K| > 1$ and write $K = \{0,1,\dots , t\}$ so that $|K| = t+1$ with $t > 0$. With this convention we have $n+2m = t-1$.

Let $e_0 \in H_{n,m}$ be given by $e_0 = (1,0 , \dots , 0 ; 0 , \dots ,0 )$ or $e_0 = (0, \dots , 0 ; \tau , 0 , \dots , 0)$. Then $e_0^2 = e_0$ in the first case and $e_0^2 = 0$ in the second case. We have $\langle e_0 \rangle = \alpha_0$, where $\alpha_0 = 1/(a_1b_1)$ in the first case and $\alpha_0 = 1/c_1$ in the second case. In either case $\alpha_0 \neq 0$. After shifting weights, we can arrange that $w_a e_0 = 0$ for all $a$. This means that the weight system is based at a point in the case $e_0 = (1, 0 , \dots , 0 ; 0 , \dots , 0)$ or based at a surface in the case $e_0 = (0 , \dots , 0 ; \tau , 0 , \dots , 0)$.

The bilinear form $\langle u , v \rangle = \langle uv \rangle$ is easily seen to be non-degenerate. We claim that the classes $1, w_0, \dots , w_t$ are linearly independent. First of all $\langle w_a , w_b \rangle = \delta_{ab}$ shows that $w_0, \dots , w_t$ are independent. If $1 = \lambda_0 w_0 + \cdots + \lambda_t w_t$ for some $\{ \lambda_i\}$, then taking inner product with $w_a$ gives $\lambda_a = 0$ for each $a$. This implies $1 = 0$, a contradiction. Hence there exists $\lambda \in H_{n,m}$ such that $\langle 1 , \lambda \rangle = 1$, $\langle  \lambda , w_a \rangle = 0$ for all $a$. If $p$ is odd, then replacing $\lambda$ by $\lambda + u \cdot 1$ for suitably chosen $u \in \mathbb{Z}_p$, we can also assume that $\langle \lambda , \lambda \rangle = 0$. Then $1, w_0 , \dots , w_t , \lambda$ is a basis for $H_{n,m}$. 

In the case $p=2$, a weight system $W$ must have $m > 0$, by Lemma \ref{lem:surf}. In this case we assume $W$ is based at a surface and we take $\lambda = e_0$. Then it is easily seen that we have $\langle \lambda , 1 \rangle = 1$, $\langle \lambda , \lambda \rangle = 0$, $\langle \lambda , w_a \rangle = 0$ for all $a$.

Since $w_a e_0 = 0$ for all $a$, one sees that the orthogonal complement of $w_0, \dots , w_t$ is spanned by $1$ and $e_0$. Hence $\lambda = u + ve_0$ for some $u,v$. From $\langle \lambda \rangle = 1$, $\langle \lambda , \lambda \rangle = 0$, one solves for $u,v$. We find
\[
\lambda = \frac{1}{\alpha_0}\left( e_0 - \frac{1}{2} \right)
\]
in the case that the weight system is based at a point and
\[
\lambda = \frac{1}{\alpha_0} e_0
\]
in the case that the weight system is based at a surface. Since $e_0^2 = e_0$ in the point case and $e_0^2 = 0$ in the surface case, we find
\[
\lambda^2 = G^2
\]
where we set $G = -1/(2\alpha_0)$ in the point case, or $G = 0$ in the surface case.

\begin{proposition}\label{prop:products}
Let $W = (a,b,c,\delta,w)$ be a based weight system for $p \ge 5$. Then we have:
\begin{align*}
w_a^2 &= \lambda + F_a w_a + \sum_{b | b \neq a} D_{ab} w_b + G, \\
w_a w_b &= D_{ab}w_a + D_{ba}w_b, \\
\lambda w_a &= Gw_a, \\
\lambda^2 &= G,
\end{align*}

where we set
\[
D_{ab} = \langle w_a^2 w_b \rangle, \quad F_a = \langle w_a^3 \rangle.
\]
\end{proposition}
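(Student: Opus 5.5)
The plan is to expand every product in the basis $1, w_0, \dots, w_t, \lambda$ of $H_{n,m}$ constructed just before the statement, reading off coefficients with the non-degenerate form. The Gram matrix of this basis is
\[
\langle 1,1\rangle = 0, \quad \langle 1,\lambda\rangle = 1, \quad \langle \lambda,\lambda\rangle = 0, \quad \langle w_a, 1\rangle = \langle w_a,\lambda\rangle = 0, \quad \langle w_a,w_b\rangle = \delta_{ab}.
\]
The first and fifth entries come from (W2) and (W4), the fourth from (W3) and the choice of $\lambda$. So every $h \in H_{n,m}$ satisfies
\[
h = \langle h\lambda\rangle \cdot 1 + \langle h\rangle\,\lambda + \sum_b \langle h w_b\rangle\, w_b .
\]
Each identity in Proposition \ref{prop:products} then reduces to computing the three kinds of coefficients $\langle h\lambda\rangle$, $\langle h\rangle$ and $\langle h w_b\rangle$ for $h = w_a^2$, $w_aw_b$, $\lambda w_a$ and $\lambda^2$.

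First I would handle the products involving $\lambda$ directly from its explicit form. In the point-based case $\lambda = \alpha_0^{-1}(e_0 - \tfrac12)$, and in the surface-based case $\lambda = \alpha_0^{-1}e_0$. Since the system is based, $w_a e_0 = 0$ for all $a$. Hence $\lambda w_a = -w_a/(2\alpha_0) = G w_a$ in the point case, and $\lambda w_a = 0 = G w_a$ in the surface case. Likewise $\lambda^2$ is computed from $e_0^2 = e_0$ (point case) or $e_0^2 = 0$ (surface case). This reproduces the constant computed just before the statement, so the last identity should be read with that constant (the displayed $\lambda^2 = G$ appears to be a slip for the $G^2$ obtained earlier).

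Next I would treat $w_a^2$. Its coefficients are:
\[
\langle w_a^2\rangle = 1 \text{ by (W4)}, \quad \langle w_a^2 w_b\rangle = D_{ab}, \quad \langle w_a^3\rangle = F_a, \quad \langle w_a^2\lambda\rangle = G\langle w_a^2\rangle = G,
\]
where the last one uses $\lambda w_a = Gw_a$. This gives the first identity. For $w_aw_b$ with $a \neq b$, the corresponding coefficients are:
\[
\langle w_aw_b\rangle = 0, \quad \langle w_aw_b\lambda\rangle = G\langle w_aw_b\rangle = 0, \quad \langle w_aw_b w_a\rangle = D_{ab}, \quad \langle w_aw_bw_b\rangle = D_{ba}.
\]
What remains is the coefficient on $w_c$ for $c \neq a,b$, namely $\langle w_aw_bw_c\rangle$.

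The main step is therefore to show that $\langle w_aw_bw_c\rangle = 0$ for pairwise distinct $a,b,c$. This is where $p \ge 5$ and (W6) enter. I would expand $\langle w(\epsilon)^3\rangle - \langle \rho\, w(\epsilon)\rangle$ as a polynomial function on $\{\pm1\}^{t+1}$, reducing with $\epsilon_a^2 = 1$. The result is a linear combination of the monomials $\epsilon_a$ and $\epsilon_a\epsilon_b\epsilon_c$ ($a<b<c$). The coefficient of $\epsilon_a\epsilon_b\epsilon_c$ is $6\langle w_aw_bw_c\rangle$, since only the terms with all three indices distinct produce this monomial. By (W6) the function vanishes identically. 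Distinct square-free monomials are linearly independent functions on $\{\pm1\}^{t+1}$ (they are characters of $\mathbb{Z}_2^{t+1}$, and their independence persists over $\mathbb{Z}_p$ for $p$ odd). Hence $6\langle w_aw_bw_c\rangle = 0$, and since $6$ is invertible for $p\ge5$ the triple product vanishes. I expect this to be the only step needing care: one must check that no other term contributes to the cubic monomials.
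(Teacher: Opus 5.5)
Your proposal is correct and follows the paper's own argument. You expand in the basis $1,\lambda,\{w_c\}$, get $\lambda w_a = Gw_a$ from basedness, and kill the $w_c$-components of $w_aw_b$ by extracting the $\epsilon_a\epsilon_b\epsilon_c$-coefficient $6\langle w_aw_bw_c\rangle$ of (W6). Your computation that the coefficient of $1$ in $w_a^2$ is $\langle w_a^2\lambda\rangle = G$ is right: the paper's proof writes this coefficient as $0$, but the stated identity and its later use in Lemma \ref{lem:cubic} need $G$. You are also right that the last identity should read $\lambda^2 = G^2$.
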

\begin{proof}
We already showed that $\lambda^2 = G$. From $e_0 w_a = 0$ and $\lambda = (1/\alpha_0)e_0 + G$ we deduce that
\[
\lambda w_a = Gw_a.
\]

Expanding $w(\epsilon)$ in $(W6)$ and extracting the $\epsilon_a \epsilon_b \epsilon_c$-term, we get
\[
6\langle w_a w_b w_c \rangle = 0 \text{ for all distinct } a,b,c.
\]
So if $p \ge 5$ then $\langle w_a w_b w_c \rangle = 0$ for distinct $a,b,c$. This means that the expansion of $w_a w_b$ in the basis $1,\lambda, \{ w_c\}_{c \in K}$ contains no $w_c$-component for any $c \neq a,b$. Also $\langle w_a w_b \rangle = 0$ by $(W4)$ and $\langle \lambda w_a w_b \rangle = G\langle w_a w_b \rangle = 0$ since $\lambda w_a = Gw_a$. So there is no $1$ or $\lambda$-component either. Hence we must have
\[
w_a w_b = D_{ab}w_a + D_{ba}w_b
\]
where $D_{ab} = \langle w_a^2 w_b \rangle$.

Lastly, consider the expansion of $w_a^2$ in the basis $1,\lambda, \{w_c\}_{c \in K}$. We have $\langle w_a^2 \rangle = 1$ by $(W4)$, $\langle \lambda w_a^2 \rangle = 0$ because $\lambda w_a = Gw_a$ and $\langle w_a \rangle = 0$ by $(W3)$. Also $\langle w_a^2 w_b \rangle = D_{ab}$ for all $b \neq a$ from the definition of $D_{ab}$. Hence
\[
w_a^2 = \lambda + F_a w_a + \sum_{b | b \neq a} D_{ab} w_b
\]
where $F_a = \langle w_a^3 \rangle$.
\end{proof}

\begin{proposition}\label{prop:dabdba}
Let $(a,b,c,\delta,w)$ be a based weight system where $p \neq 2$. Then $D_{ab}D_{ba} = 0$ for all $a \neq b$. Hence either $D_{ab} = 0$ or $D_{ba} = 0$.
\end{proposition}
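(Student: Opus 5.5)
The plan is to evaluate the product formula $w_a w_b = D_{ab}w_a + D_{ba}w_b$ from Proposition \ref{prop:products} coordinatewise in $H_{n,m}$ at the basepoint $e_0$, where all the weight vectors vanish (in the point case) or have zero constant term (in the surface case). The relation then forces $D_{ab}D_{ba}=0$ with no further computation. One caveat: Proposition \ref{prop:products} was stated for $p \ge 5$. For $p=3$ I would rely on whatever version of the relation $w_aw_b = D_{ab}w_a + D_{ba}w_b$ holds under the modified axioms of Section \ref{sec:smallp}. This relation is the only input the argument needs.

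Concretely, fix $a \neq b$. I would rewrite the product relation as the identity
\[
(w_a - D_{ba})(w_b - D_{ab}) = D_{ab}D_{ba}\cdot 1
\]
in $H_{n,m}$, which follows by expanding the left side. Since $H_{n,m}$ is a product of local rings $\mathbb{Z}_p$ and $\mathbb{Z}_p[\tau]/(\tau^2)$, I can read this identity off one factor at a time.

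\emph{Point case.} If the weight system is based at a point, take the factor corresponding to the basepoint. There $w_ae_0 = w_be_0 = 0$, so the basepoint coordinates of $w_a$ and $w_b$ are $0$. The identity restricted to this coordinate gives $(-D_{ba})(-D_{ab}) = D_{ab}D_{ba}$. That is a tautology, so I would instead restrict the original relation $w_aw_b = D_{ab}w_a + D_{ba}w_b$, which reads $0 = 0$ there. I therefore expect to need a little more care, and the step I would actually take is the following.

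\emph{Using the pairing.} Expand $0 = \langle (w_a - D_{ba})(w_b-D_{ab}) \rangle - D_{ab}D_{ba}\langle 1\rangle$ and use (W2)–(W4): $\langle w_aw_b\rangle = 0$, $\langle w_a\rangle=\langle w_b\rangle=0$, $\langle 1 \rangle=0$. This also gives no information. So instead I would use the nilpotent/idempotent structure. Multiply the identity by $e_0$. Since $e_0w_a = e_0w_b = 0$, this gives $D_{ab}D_{ba}e_0 = D_{ab}D_{ba}e_0$, again trivial.

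The real obstacle is therefore to find a coordinate where the identity is nontrivial. My preferred route is to square. Set $u = w_a - D_{ba}$ and $v = w_b - D_{ab}$, so that $uv = D_{ab}D_{ba}$. If $D_{ab}D_{ba} \neq 0$, then $u$ is a unit, and hence $w_a - D_{ba}$ is invertible in $H_{n,m}$. At the basepoint this unit has value $-D_{ba}$; symmetrically, $v$ has basepoint value $-D_{ab}$. I would then pair the identity $uv = D_{ab}D_{ba}$ against $\lambda$ and against $w_a$, using $\lambda w_c = Gw_c$, $\lambda^2 = G$ and $\langle \lambda\rangle = 1$:
\[
\langle \lambda u v\rangle = G\langle w_aw_b\rangle - D_{ab}G\langle w_a\rangle - D_{ba}G\langle w_b\rangle + D_{ab}D_{ba}\langle\lambda\rangle = D_{ab}D_{ba}.
\]
On the other hand, the product formula gives $\langle \lambda w_aw_b\rangle = G\langle w_aw_b\rangle = 0$, while expanding $\langle\lambda uv\rangle$ via the product formula yields $0 - 0 - 0 + D_{ab}D_{ba}$. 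To get a contradiction, I would instead compare $\langle w_a^2 w_b\rangle$ computed two ways. By definition it equals $D_{ab}$. Via $w_a(w_aw_b) = D_{ab}w_a^2 + D_{ba}w_aw_b$ and pairing with $1$, it equals $D_{ab}\cdot 1 + D_{ba}\cdot 0$. Finally, I would compute $\langle w_a^2 w_b^2\rangle$ in two ways: once as $\langle (w_aw_b)^2\rangle = D_{ab}^2 + D_{ba}^2$, and once as $\langle w_a\cdot w_a w_b^2\rangle$ using $w_b(w_aw_b)$. Equating the two expressions, the cross term $2D_{ab}D_{ba}\langle w_aw_b\rangle$ vanishes, and I expect the surviving comparison to be exactly $2D_{ab}D_{ba} = 0$. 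Since $p \neq 2$, this gives $D_{ab}D_{ba}=0$.
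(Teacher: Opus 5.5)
\textbf{There is a genuine gap: the final step does not give what you expect, and the approach cannot work.}

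Compute $\langle w_a^2w_b^2\rangle$ both ways. As $\langle (w_aw_b)^2\rangle$ it equals $D_{ab}^2+D_{ba}^2$. As $\langle w_a\cdot w_b(w_aw_b)\rangle = D_{ab}\langle w_a^2w_b\rangle + D_{ba}\langle w_aw_b^2\rangle$ it again equals $D_{ab}^2+D_{ba}^2$. So the comparison is a tautology, just like your earlier attempts with $e_0$, $\lambda$ and $\langle w_a^2w_b\rangle$.

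This is not bad luck. Your only inputs are the product formula, (W2)--(W4), basedness and the $\lambda$-relations. These hold in an honest commutative algebra with a linear functional, and they are jointly consistent with $D_{ab}D_{ba}\neq 0$. So no rearrangement of them can prove the proposition. For example, over $\mathbb{Z}_7$ take type $(4,0)$ with $(a_ib_i)^{-1}=4,2,2,6$, $w_a=(0,3,5,2)$ and $w_b=(0,5,3,2)$. Then:
\begin{itemize}
\item $\langle 1\rangle=\langle w_a\rangle=\langle w_b\rangle=\langle w_aw_b\rangle=0$ and $\langle w_a^2\rangle=\langle w_b^2\rangle=1$;
\item the system is based at the first point, so the $\lambda$-relations hold automatically;
\item $w_aw_b=w_a+w_b$, so $D_{ab}=D_{ba}=1$.
\end{itemize}

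\textbf{The missing ingredient is $\rho$, that is, axioms (W6) and (W7).} This is how the paper argues.
\begin{itemize}
\item The $\epsilon_a$-coefficient of (W6) gives $\langle\rho w_a\rangle=F_a+3\sum_{c\neq a}D_{ca}$.
\item The $\epsilon_a\epsilon_b$-coefficient of (W7) gives $\langle\rho w_aw_b\rangle=\langle w_a^3w_b+w_aw_b^3\rangle+3\sum_{c\neq a,b}\langle w_aw_bw_c^2\rangle$. By the product formula this equals $D_{ab}F_a+D_{ba}F_b+2D_{ab}D_{ba}+3\sum_{c\neq a,b}(D_{ab}D_{ca}+D_{ba}D_{cb})$.
\item Computing instead $\langle\rho(D_{ab}w_a+D_{ba}w_b)\rangle$ with the (W6) formula gives the same expression, but with $6D_{ab}D_{ba}$ in place of $2D_{ab}D_{ba}$.
\end{itemize}
Subtracting gives $4D_{ab}D_{ba}=0$, and this is where $p\neq 2$ enters.

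One caution. The $\epsilon_a\epsilon_b$-coefficient of (W7) is $60$ times the identity used above, so that step needs $p\ge 7$ and is vacuous for $p=3,5$. Your worry about $p=3$ concerned only the product formula, which the paper secures via (W9); the real dependence on $p$ is in this division by $60$.
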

\begin{proof}
The $\epsilon_a$-terms in $(W6)$ give:
\begin{align*}
\langle \rho w_a \rangle &= \langle w_a^3 + 3w_a \sum_{b|b \neq a} w_b^2 \rangle \\
&= F_a + 3 \sum_{b | b \neq a} D_{ba}.
\end{align*}

The $\epsilon_a \epsilon_b$-terms in $(W7)$ give:
\begin{align*}
\langle \rho w_a w_b \rangle &= 3 \langle w_a w_b \sum_{c| c\neq a,b} w_c^2 \rangle + \langle w_a^3 w_b + w_b^3 w_a \rangle \\
&= 3\langle (D_{ab}w_a + D_{ba}w_b) \sum_{c | c \neq a,b} w_c^2 \rangle + \langle w_a^2( D_{ab}w_a + D_{ba}w_b) + w_b^2( D_{ab}w_a + D_{ba}w_b) \rangle \\
&= 3\sum_{c| c \neq a,b} (D_{ab} D_{ca} + D_{ba} D_{cb} ) + D_{ab}F_a + D_{ba}F_b + 2D_{ab}D_{ba}.
\end{align*}

On the other hand, we have
\begin{align*}
\langle \rho w_a w_b \rangle &= \langle \rho(D_{ab}w_a + D_{ba}w_b) \rangle \\
&= D_{ab}( F_a + 3D_{ba} + 3 \sum_{c | c \neq a,b} D_{ca} ) + D_{ba}( F_b + 3D_{ab} + 3 \sum_{c | c \neq ab} D_{cb} ) \\
&= D_{ab}F_a + D_{ba}F_b + 6 D_{ab}D_{ba} + 3 \sum_{c | c \neq a,b} ( D_{ab} D_{ca} + D_{ba} D_{cb} ).
\end{align*}

Equating the two expressions gives
\[
4 D_{ab} D_{ba} = 0.
\]
Hence $D_{ab} D_{ba} =  0$ if $p \neq 2$.

\end{proof}

\begin{lemma}\label{lem:dabc}
For distinct $a,b,c$, we have
\[
D_{ab} D_{ac} = D_{ab}D_{bc} + D_{ac} D_{cb}.
\]
\end{lemma}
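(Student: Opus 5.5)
Compute $\langle w_a^2 w_b w_c \rangle$ in two ways using associativity of the ring $H_{n,m}$ together with the multiplication table of Proposition \ref{prop:products}. Throughout I use the basic pairings, which all follow from $(W3)$, $(W4)$ and $\lambda w_a = G w_a$:
\[
\langle w_d \rangle = 0, \qquad \langle w_d w_e \rangle = \delta_{de}, \qquad \langle \lambda w_d \rangle = 0, \qquad \langle w_d^2 w_e \rangle = D_{de}.
\]
I also use $\langle w_a w_b w_c\rangle = 0$ for distinct $a,b,c$, which was obtained in the proof of Proposition \ref{prop:products}. We are in the setting $p\ge 5$ of that proposition.

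\textbf{First grouping.} Write $w_a^2 w_b w_c = w_a^2 (w_b w_c)$. By Proposition \ref{prop:products},
\[
w_b w_c = D_{bc} w_b + D_{cb} w_c .
\]
Pairing with $w_a^2$ gives
\[
\langle w_a^2 w_b w_c \rangle = D_{bc}\langle w_a^2 w_b\rangle + D_{cb}\langle w_a^2 w_c\rangle = D_{bc}D_{ab} + D_{cb}D_{ac}.
\]

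\textbf{Second grouping.} Write $w_a^2 w_b w_c = (w_a w_b)(w_a w_c)$ and expand:
\begin{align*}
(w_a w_b)(w_a w_c) &= (D_{ab}w_a + D_{ba}w_b)(D_{ac}w_a + D_{ca}w_c) \\
&= D_{ab}D_{ac} w_a^2 + D_{ab}D_{ca} w_a w_c + D_{ba}D_{ac} w_a w_b + D_{ba}D_{ca} w_b w_c .
\end{align*}
Applying $\langle\;\rangle$ term by term: $\langle w_a^2\rangle = 1$, and the pairings of the products of distinct $w$'s vanish by $(W4)$. Hence
\[
\langle w_a^2 w_b w_c \rangle = D_{ab}D_{ac}.
\]

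Equating the two expressions gives the claimed identity
\[
D_{ab}D_{ac} = D_{ab}D_{bc} + D_{ac}D_{cb}.
\]

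No serious obstacle is expected. The only care needed is to confirm that every reduction uses only $(W3)$, $(W4)$ and the product formulas of Proposition \ref{prop:products}, so the lemma holds in the same range of $p$. The one non-trivial input is the vanishing $\langle w_a w_b w_c\rangle=0$ for distinct indices, which relies on $p\ge 5$.
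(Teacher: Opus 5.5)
Your argument is correct and is essentially the paper's proof. The paper compares the $w_a$-coefficients of $w_a(w_bw_c)$ and $(w_aw_b)w_c$. Since $\langle h\,w_a\rangle$ is exactly the $w_a$-coordinate of $h$ in the basis $1,\lambda,w_0,\dots,w_t$, your two evaluations of $\langle w_a^2w_bw_c\rangle$ are the same two computations phrased through the pairing rather than through coefficient comparison.
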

\begin{proof}
We expand the product $w_a w_b w_c$ in two ways. First
\begin{align*}
w_a w_b w_c &= w_a( w_b w_c) \\
&= w_a( D_{bc}w_b + D_{cb}w_c ) \\
&= D_{bc}( D_{ab}w_a + D_{ba}w_b) + D_{cb}( D_{ac}w_a + D_{ca}w_c) \\
&= (D_{ab}D_{bc} + D_{ac}D_{cb})w_a + D_{ba}D_{bc}w_b + D_{ca}D_{cb}w_c.
\end{align*}

On the other hand
\begin{align*}
w_a w_b w_c &= (w_a w_b)w_c \\
&= (D_{ab}w_a + D_{ba}w_b)w_c \\
&= D_{ab}(D_{ac}w_a + D_{ca}w_c) + D_{ba}( D_{bc}w_b + D_{cb}w_c) \\
&= D_{ab}D_{ac}w_a + D_{ba}D_{bc}w_b + (D_{ca}D_{ab} + D_{cb}D_{ba})w_c.
\end{align*}

Equating coefficients of $w_a$ gives the result.

\end{proof}

\begin{lemma}\label{lem:cubic}
Every $w_a$ satisfies a cubic equation
\[
w_a^3 - F_a w_a^2 - G_a w_a = 0
\]
where
\[
G_a = 2G + \sum_{b | b \neq a} D_{ab}^2.
\]
\end{lemma}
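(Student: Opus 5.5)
The cubic relation should follow from a single multiplication: multiply the expansion of $w_a^2$ in Proposition \ref{prop:products} by $w_a$, simplify each term with the other product rules in Proposition \ref{prop:products}, and then remove the cross terms using Proposition \ref{prop:dabdba}. We work in the setting where Proposition \ref{prop:products} holds (a based weight system with $p \ge 5$); in particular $p$ is odd, so Proposition \ref{prop:dabdba} also applies.

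First, write
\[
w_a^3 = w_a \cdot w_a^2 = \lambda w_a + F_a w_a^2 + \sum_{b \mid b \neq a} D_{ab}\, w_a w_b + G w_a .
\]
Next, substitute $\lambda w_a = G w_a$ and $w_a w_b = D_{ab} w_a + D_{ba} w_b$. This gives
\[
w_a^3 = F_a w_a^2 + \Big(2G + \sum_{b \mid b\neq a} D_{ab}^2\Big) w_a + \sum_{b \mid b \neq a} D_{ab} D_{ba}\, w_b .
\]
Finally, Proposition \ref{prop:dabdba} gives $D_{ab}D_{ba} = 0$ for every $b \neq a$, so the last sum vanishes. What remains is $w_a^3 = F_a w_a^2 + G_a w_a$ with $G_a = 2G + \sum_{b \mid b \neq a} D_{ab}^2$, which is the stated cubic.

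There is no real obstacle here. The only point needing care is that the term $F_a w_a^2$ is kept as $w_a^2$ and not expanded again, so that the result is a genuine polynomial relation in $w_a$ alone. Once that is done, the only nontrivial input is the vanishing $D_{ab}D_{ba}=0$.
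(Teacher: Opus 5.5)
Your proof is correct and is the same as the paper's argument. You multiply the expansion of $w_a^2$ from Proposition~\ref{prop:products} by $w_a$, substitute $\lambda w_a = G w_a$ and $w_a w_b = D_{ab}w_a + D_{ba}w_b$, and remove the cross terms using $D_{ab}D_{ba}=0$ from Proposition~\ref{prop:dabdba}. Keeping the constant term $+G$ in that expansion, as you did, is what produces the $2G$ in $G_a$.
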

\begin{proof}
Start with $w_a^2 = \lambda + F_a w_a + \sum_{b | b \neq a} D_{ab} w_b + G$. Multiply both sides by $w_a$ to get
\begin{align*}
w_a^3 &= 2Gw_a + F_a w_a^2 + \sum_{b | b \neq a } D_{ab} w_a w_b \\
&= 2G w_a + F_a w_a^2 + \sum_{b | b \neq a} ( D_{ab}^2 w_a + D_{ab}D_{ba} w_b).
\end{align*}
But since $D_{ab}D_{ba} = 0$ for all $b \neq a$ by Proposition \ref{prop:dabdba}, the result follows.
\end{proof}

Define a partial relation $>$ on the set $K$ by setting $a > b$ if $D_{ab} \neq 0$. Since $D_{ab}D_{ba} = 0$ for all $a \neq b$, we see that $a > b$ and $b > a$ can not both occur. If $a > b$, then $D_{ab} \neq 0$, but $D_{ab}D_{ba} = 0$ and so we must have $D_{ba} = 0$.

\begin{lemma}
If $a>b$ and $b>c$ then $a>c$.
\end{lemma}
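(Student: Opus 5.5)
The natural tool is Lemma \ref{lem:dabc}, applied with suitably chosen indices. Since $a>b$ and $b>c$ cannot hold with $a=c$ (that would need both $D_{ab}\neq 0$ and $D_{ba}\neq 0$, contradicting Proposition \ref{prop:dabdba}), the indices $a,b,c$ are distinct. The hypotheses say $D_{ab}\neq 0$ and $D_{bc}\neq 0$, and by Proposition \ref{prop:dabdba} also $D_{ba}=0$ and $D_{cb}=0$. We must show $D_{ac}\neq 0$.

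Apply Lemma \ref{lem:dabc} to the triple $(a,b,c)$:
\[
D_{ab}D_{ac} = D_{ab}D_{bc} + D_{ac}D_{cb}.
\]
Since $D_{cb}=0$, the last term vanishes, leaving $D_{ab}D_{ac} = D_{ab}D_{bc}$. We are working in the field $\mathbb{Z}_p$ and $D_{ab}\neq 0$, so we may cancel to get $D_{ac} = D_{bc} \neq 0$. Hence $a>c$.

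There is no real obstacle; the only step needing care is checking that the right instance of Lemma \ref{lem:dabc} is used, namely the one whose stray term involves $D_{cb}$, which vanishes by antisymmetry of $>$. As a byproduct we get the identity $D_{ac}=D_{bc}$ whenever $a>b>c$, which will likely be useful later.
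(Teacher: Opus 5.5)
Your proof is correct and is essentially the paper's argument. The paper applies Lemma \ref{lem:dabc} to the same triple but argues by contradiction (assuming $D_{ac}=0$ to force $D_{ab}D_{bc}=0$), whereas you cancel $D_{ab}$ directly; this additionally yields $D_{ac}=D_{bc}$ and explicitly rules out the case $a=c$, which the paper leaves implicit.
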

\begin{proof}
Suppose $a>b$ and $b>c$ but that $a \ngtr c$. So $D_{ab}, D_{bc} \neq 0$ and $D_{ba} = D_{cb} = D_{ac} = 0$. But from Lemma \ref{lem:dabc}, we have $D_{ab}D_{ac} = D_{ab}D_{bc} + D_{cb} D_{ac}$, which reduces to $0 = D_{ab}D_{bc}$, contradicting $D_{ab}, D_{bc} \neq 0$.
\end{proof}

\begin{lemma}\label{lem:noloop}
For any $r \ge 2$ there does not exist a sequence $a_1, \dots , a_r$ with $a_1 > a_2 > \cdots > a_r > a_1$.
\end{lemma}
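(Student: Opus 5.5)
Assume for contradiction that a cycle $a_1 > a_2 > \cdots > a_r > a_1$ exists with $r \ge 2$; we reduce it to a shorter cycle.

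If $r = 2$, then $a_1 > a_2$ and $a_2 > a_1$, so $D_{a_1a_2} \neq 0$ and $D_{a_2a_1} \neq 0$. This contradicts Proposition \ref{prop:dabdba}, which gives $D_{a_1a_2}D_{a_2a_1} = 0$ (as already noted after the definition of $>$). Hence no $2$-cycle exists.

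If $r \ge 3$, the indices $a_1, a_2, a_3$ are distinct: if two coincided we would have $a_i > a_i$, and $D_{aa}$ is not defined since $>$ is only defined for $a \neq b$. We need this distinctness to apply the preceding transitivity lemma. From $a_1 > a_2$ and $a_2 > a_3$, that lemma gives $a_1 > a_3$. Then $a_1 > a_3 > a_4 > \cdots > a_r > a_1$ is a cycle of length $r-1$. By induction on $r$, this reduces every cycle to a $2$-cycle, which was shown impossible.

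One point needs care: the sequence should consist of distinct elements. If a cycle repeats an index, say $a_i = a_j$ with $i < j$, then $a_i > a_{i+1} > \cdots > a_{j-1} > a_i$ is a shorter cycle. Its length is at least $2$, since a length-$1$ cycle would require $a_i > a_i$, which is excluded. So it suffices to treat cycles of distinct elements, and then every application of the transitivity lemma is to three distinct indices. There is no substantive obstacle beyond this bookkeeping; the real content lies in Proposition \ref{prop:dabdba} and Lemma \ref{lem:dabc}, which are already established.
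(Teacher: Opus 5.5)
Your proof is correct and follows the paper's argument: the base case $r=2$ is ruled out by $D_{ab}D_{ba}=0$, and the inductive step uses transitivity to collapse $a_1>a_2>a_3$ to $a_1>a_3$, giving a cycle of length $r-1$. Your extra bookkeeping (passing to a shorter cycle when an index repeats, so transitivity is only applied to distinct indices) is a sound refinement that the paper leaves implicit; strictly speaking $D_{aa}=\langle w_a^3\rangle=F_a$ is defined, and the real point is that $>$ is only considered between distinct indices.
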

\begin{proof}
In the case $r=2$ this would give $a_1 > a_2$ and $a_2 > a_1$, which is impossible. Now let $r > 2$ and assume the result holds for $r-1$. If $a_1 > a_2 > \cdots > a_r > a_1$, then by transitivity, $a_1 > a_2 > a_3$ implies $a_1 > a_3$ and so $a_1 > a_3 > \cdots > a_r > a_1$, which contradicts the inductive hypothesis. So the result follows by induction.
\end{proof}

\begin{lemma}\label{lem:min}
There exists an index $a \in K$ such that $a \ngtr b$ for all $b \neq a$.
\end{lemma}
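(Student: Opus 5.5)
The plan is to derive the existence of a minimal element directly from Lemma \ref{lem:noloop}, using the finiteness of $K$. Suppose for contradiction that every index $a \in K$ has some $b \neq a$ with $a > b$. Starting from an arbitrary $a_1 \in K$, I choose $a_2 \neq a_1$ with $a_1 > a_2$. Then I choose $a_3 \neq a_2$ with $a_2 > a_3$, and so on. This produces an infinite sequence $a_1 > a_2 > a_3 > \cdots$ in the finite set $K$.

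Since $K$ is finite, some index repeats: there are $i < j$ with $a_i = a_j$. Consecutive terms are distinct by construction, so $j \ge i+2$. The segment $a_i > a_{i+1} > \cdots > a_{j-1} > a_j = a_i$ is then a cycle $a_i > \cdots > a_{j-1} > a_i$ of length $r = j - i \ge 2$. Lemma \ref{lem:noloop} forbids such a cycle, so the assumption is false and some $a$ satisfies $a \ngtr b$ for all $b \neq a$.

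Equivalently, one can say that $>$ is a strict partial order on the finite set $K$. Irreflexivity is built into the definition, since $>$ is only defined for distinct indices. Transitivity is the preceding lemma, and antisymmetry follows from Proposition \ref{prop:dabdba}. Any strict partial order on a finite nonempty set has a minimal element.

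The only point needing care is the bookkeeping that the repeated segment really has length at least $2$, which is what makes Lemma \ref{lem:noloop} applicable. Nonemptiness of $K$ is automatic, since $|K| = t+1 \ge 2$. There is no substantive obstacle here.
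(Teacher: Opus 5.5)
Your proof is correct and follows essentially the same route as the paper: assume every index has a successor, build a descending chain longer than $|K|$, and use the forced repetition to contradict Lemma \ref{lem:noloop}. Your explicit check that the repeated segment gives a cycle of length $r \ge 2$ is a small bit of care the paper leaves implicit.
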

\begin{proof}
Assume that for every $a \in K$ there exists $b \in K$ with $a > b$. Choose any $a_1 \in K$. Choose $a_2 \in K$ with $a_1 > a_2$. Choose $a_3 \in K$ with $a_2 > a_3$. Continuing in this fashion, we get a sequence $a_1 > a_2 > \cdots > a_r$ where $r$ can be arbitrarily large. But if $r > |K|$ then the sequence $\{ a_1 , a_2 , \dots , a_r \}$ must have an element that appears at least twice. This contradicts Lemma \ref{lem:noloop}.
\end{proof}

By Lemma \ref{lem:min}, there exists an $a \in K$ such that $a \ngtr b$ for all $b \neq a$. After possibly re-indexing, we can assume that $a=0$. Thus $0 \ngtr b$ for all $b \neq 0$. That is, $D_{0b} = 0$ for all $b \neq 0$. Therefore, $w_0 w_b = D_{b0}w_b$ for all $b \neq 0$. Since $D_{0b} = 0$ for all $b \neq 0$, we get
\[
w_0^2 = \lambda + F_0 w_0 + G.
\]
Hence
\begin{align*}
D_{b0}^2 w_b &= w_0^2 w_b \\
&= ( \lambda + F_0 w_0 + G)w_b \\
&= F_0 (w_0 w_b) + 2G w_b \\
&= (F_0 D_{b0} + 2G)w_b.
\end{align*}

Therefore, $D_{b0}$ is a solution of the quadratic equation
\[
x^2 - F_0 x - 2G = 0.
\]
Let $\lambda_1, \lambda_2$ be the two roots of this equation. So for each $b \neq 0$, we have that $D_{b0} = \lambda_1$ or $\lambda_2$ and thus $w_0 w_b = \lambda_1 w_b$ or $w_0 w_b = \lambda_2 w_b$.

\begin{definition}
A block decomposition of $(a,b,c,\delta,w)$ consists of:
\begin{itemize}
\item[(1)]{A decomposition $K = K_1 \cup K_2$ into disjoint non-empty subsets $K_1,K_2$.}
\item[(2)]{For each $a \in K_1$ a constant $t_a \in \mathbb{Z}_p$ such that $w_a w_b = t_a w_b$ for all $b \in K_2$.} 
\end{itemize}

\end{definition}

\begin{lemma}\label{lem:bd1}
Let $(a,b,c,\delta,w)$ be a based weight system. If $|K| > 1$ then a block decomposition exists. 
\end{lemma}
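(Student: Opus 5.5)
The plan is to build the block decomposition from the minimal index $0$ given by Lemma \ref{lem:min}. Recall what the preceding computation gives. After re-indexing, $D_{0b}=0$ for all $b\neq 0$. We then have $w_0^2=\lambda+F_0w_0+G$, and for every $b\neq 0$ we have $w_0w_b=D_{b0}w_b$, where $D_{b0}\in\{\lambda_1,\lambda_2\}$ are the roots of $x^2-F_0x-2G=0$.

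First try the naive choice $K_1=\{0\}$, $K_2=K\setminus\{0\}$, which is nonempty since $|K|>1$. Then $w_0w_b=t_0w_b$ for all $b\in K_2$ requires a single constant $t_0$. So this works exactly when all the $D_{b0}$, $b\neq 0$, coincide, for instance when $\lambda_1=\lambda_2$. In that case we are done.

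In general the $D_{b0}$ split $K\setminus\{0\}$ into the sets $B_1=\{b: D_{b0}=\lambda_1\}$ and $B_2=\{b:D_{b0}=\lambda_2\}$, with $\lambda_1\neq\lambda_2$ and both sets nonempty. Here I would take $K_2=B_2$ and $K_1=\{0\}\cup B_1$, with $t_0=\lambda_2$. For $a\in B_1$ and $b\in B_2$ I must show $w_aw_b=t_aw_b$. Since $w_aw_b=D_{ab}w_a+D_{ba}w_b$ and $w_a,w_b$ are independent, this amounts to showing $D_{ab}=0$ and that $D_{ba}$ depends only on $a$.

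The tool is associativity, exactly as in Lemma \ref{lem:dabc}, applied to the triple $(0,a,b)$. Using $D_{0a}=D_{0b}=0$, the $w_0w_aw_b$ computation, i.e.\ Lemma \ref{lem:dabc} with the index $0$ placed suitably, gives relations such as
\[
D_{ab}D_{a0}=D_{ab}D_{b0}+D_{a0}D_{0b}=D_{ab}D_{b0}.
\]
Hence $D_{ab}(D_{a0}-D_{b0})=0$, and since $\lambda_1\neq\lambda_2$ this gives $D_{ab}=0$. So $w_aw_b=D_{ba}w_b$.

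Next I must see that $D_{ba}$ is independent of $b\in B_2$. The natural candidate is $t_a=D_{ba}$ for a fixed $b$, so the claim is that $D_{b'a}=D_{ba}$ for $b,b'\in B_2$. There are two ways to get this. One is to use the quadratic trick again: compute $w_a^2w_b$ via Proposition \ref{prop:products}, which shows $D_{ba}$ is a root of a quadratic determined by $a$. A cleaner alternative may be to refine the choice. Take $K_2$ to be a $>$-minimal part of $B_2$, or iterate by induction on $|K|$: restrict attention to the subsystem $B_2$ and the element $w_a$, and use Lemma \ref{lem:dabc} on triples $(a,b,b')$ with $D_{ab}=D_{ab'}=0$. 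That yields
\[
0=D_{ab}D_{ab'}=D_{ab}D_{bb'}+D_{ab'}D_{b'b}
\]
trivially, so I would instead use the $w_b$ coefficient identity $D_{ba}D_{bb'}=\dots$ coming from the same associativity expansion.

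The main obstacle is this uniformity of $t_a$ across $K_2$. If direct identities fail, the fallback is to choose $K_2$ more cleverly. Let $K_2$ be an equivalence class of indices $b$ sharing the same multiplication operator $w_a\mapsto w_aw_b$ restricted to the span of the $w$'s, or more simply a maximal set on which all $D_{\cdot 0}$ and $D_{\cdot a}$ agree. Then I would argue, using the quadratic constraint from the cubic relation in Lemma \ref{lem:cubic}, that each $w_a$ acts on $\mathrm{span}\{w_b:b\in K_2\}$ by a scalar. Here the cubic relation $w_a^3-F_aw_a^2-G_aw_a=0$ forces the eigenvalues of multiplication by $w_a$ on the $w_b$ to lie in a two-element set, and the compatibility relations are those of Lemma \ref{lem:dabc}.
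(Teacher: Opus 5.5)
Your setup matches the paper's: $K_1=\{0\}\cup B_1$, $K_2=B_2$, $t_0=\lambda_2$. Your derivation of $D_{ab}=0$ for $a\in B_1$, $b\in B_2$ from Lemma \ref{lem:dabc} applied to $(a,b,0)$ is also correct. But the proof is not finished.

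The step you flag as the main obstacle, that $D_{ba}$ is independent of $b\in B_2$, is never established, and your fallbacks do not establish it either. The quadratic trick via $w_a^2w_b$ would at best show that $D_{ba}$ is a root of some quadratic. The coefficients of that quadratic involve the terms $D_{ac}w_cw_b$ and so may depend on $b$. Even if they did not, a quadratic allows two values, so this gives no uniformity. The ``equivalence class'' and eigenvalue refinements of $K_2$ just restate the required claim (that each $w_a$, $a\in K_1$, acts by a scalar on the $w_b$, $b\in K_2$) without proving it. As written, the argument has a genuine gap at exactly this point.

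The missing observation is that $D_{ba}$ is not merely uniform in $b$ but zero. Apply Lemma \ref{lem:dabc} to the triple $(b,a,0)$ instead of $(a,b,0)$. This gives $D_{ba}D_{b0}=D_{ba}D_{a0}+D_{b0}D_{0a}=D_{ba}D_{a0}$, since $D_{0a}=0$. Hence $D_{ba}(\lambda_2-\lambda_1)=0$, and since $\lambda_1\neq\lambda_2$, $D_{ba}=0$. Combined with $D_{ab}=0$, this gives $w_aw_b=0$ for all $a\in B_1$, $b\in B_2$, so $t_a=0$ works for every $a\in B_1$.

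This is exactly the paper's proof. There it is phrased directly via associativity: $\lambda_1w_aw_b=(w_0w_a)w_b=w_0w_aw_b=w_a(w_0w_b)=\lambda_2w_aw_b$, which forces $w_aw_b=0$ because $\lambda_1\neq\lambda_2$.
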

\begin{proof}
Choose $w_0$ as above. Let $\lambda_1, \lambda_2$ be the roots of $x^2 - F_0 x - 2G$. Then for each $a \neq 0$, $w_0 w_a = \lambda_1 w_a$ or $\lambda_2 w_a$. If $\lambda_1 = \lambda_2$, then we can take $K_1 = \{0\}$, $t_0 = \lambda_1$, $K_2 = K \setminus \{0 \}$. If $\lambda_1 \neq \lambda_2$ then we have a decomposition $K = \{0 \} \cup K'_1 \cup K'_2$ where for $i=1,2$, we set $K'_i = \{ a \in K \setminus \{0 \} \; | \; w_0 w_a = \lambda_i w_a \}$. Since $|K| > 1$, we have $|K'_1| > 0$ or $|K'_2| > 0$. Without loss of generality, assume $|K'_2| > 0$. Then we will take $K_1 = \{0\} \cup K'_1$ and $K_2 = K'_2$. Let $a \in K'_1$, $b \in K_2$. Then $w_0 w_a w_b = (w_0 w_a)w_b = \lambda_1 w_a w_b$. Also $w_0 w_a w_b = w_a( w_0w_b) = \lambda_2 w_a w_b$, so $(\lambda_1 - \lambda_2)w_a w_b = 0$. Hence $w_a w_b = 0$, since $\lambda_1 \neq \lambda_2$. Finally, if $b \in K_2$, then $w_0 w_b = \lambda_2 w_b$. So we have a block decomposition with $t_0 = \lambda_2$, $t_a = 0$ for $a \in K'_1$.
\end{proof}

\begin{lemma}\label{lem:bd2}
If $K_1,K_2$, $\{ t_a \}_{a \in K_1}$ is a block decomposition of $(a,b,c,\delta,w)$ and $|K_2| > 1$ then there exists a block decomposition $K'_1, K'_2$, $\{ t'_a \}_{a \in K'_1}$ of $(a,b,c,\delta,w)$ with $K'_2$ a proper subset of $K_2$.
\end{lemma}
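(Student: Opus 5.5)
Given a block decomposition $K = K_1 \cup K_2$, $\{t_a\}_{a \in K_1}$ with $|K_2| > 1$, I will run the argument of Lemma \ref{lem:bd1} again, but restricted to the indices in $K_2$. The relation $>$ restricted to $K_2$ still has no loops (Lemma \ref{lem:noloop}). So, exactly as in Lemma \ref{lem:min}, there is an index $c \in K_2$ with $D_{cb} = 0$ for all $b \in K_2 \setminus \{c\}$. For $b \in K_2$, $b \neq c$, Proposition \ref{prop:products} then gives $w_c w_b = D_{bc} w_b$.

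Next I need a quadratic equation satisfied by these $D_{bc}$. Here the indices $a \in K_1$ enter the expansion
\[
w_c^2 = \lambda + F_c w_c + \sum_{a \in K_1} D_{ca} w_a + G.
\]
For $b \in K_2 \setminus \{c\}$, multiply by $w_b$. The block condition gives $w_a w_b = t_a w_b$, so
\[
D_{bc}^2 w_b = w_c^2 w_b = \Bigl(F_c D_{bc} + 2G + \sum_{a \in K_1} D_{ca} t_a\Bigr) w_b.
\]
Since $w_b \neq 0$ (because $\langle w_b^2 \rangle = 1$), each $D_{bc}$ is a root of
\[
x^2 - F_c x - \Bigl(2G + \sum_{a\in K_1} D_{ca}t_a\Bigr).
\]
Call the two roots $\mu_1, \mu_2$.

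Now I split $K_2 \setminus \{c\}$, which is non-empty because $|K_2| > 1$, into $L_i = \{ b : w_c w_b = \mu_i w_b \}$ for $i = 1, 2$.
\begin{itemize}
\item If $\mu_1 = \mu_2$, take $K_1' = K_1 \cup \{c\}$, $K_2' = K_2 \setminus \{c\}$, keep the old $t_a$ for $a \in K_1$, and set $t'_c = \mu_1$.
\item Otherwise, assume without loss of generality that $L_2 \neq \emptyset$. Take $K_1' = K_1 \cup \{c\} \cup L_1$ and $K_2' = L_2$, with $t'_c = \mu_2$ and $t'_a = 0$ for $a \in L_1$. The required identity $w_a w_b = 0$ for $a \in L_1$, $b \in L_2$ follows as before: computing $w_c w_a w_b$ in two ways gives $(\mu_1 - \mu_2) w_a w_b = 0$.
\end{itemize}
For $a \in K_1$ the old relations $w_a w_b = t_a w_b$ still hold for $b \in K_2' \subset K_2$. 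In both cases $K_2'$ is a proper, non-empty subset of $K_2$.

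The main obstacle is deriving the quadratic. The cross terms $D_{ca} w_a w_b$ with $a \in K_1$ could spoil it; the block relation is exactly what turns each of them into a scalar multiple of $w_b$, so that step has to be checked carefully. The rest is a direct transcription of Lemma \ref{lem:bd1}.
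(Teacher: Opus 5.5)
Your proposal is correct and follows the paper's proof essentially step for step: pick a $>$-minimal $c\in K_2$, use the block relation to turn the $K_1$ cross terms into scalar multiples of $w_b$ so that each $D_{bc}$ is a root of a quadratic, and then split $K_2\setminus\{c\}$ according to the roots. Your constant term $-\bigl(2G+\sum_{a\in K_1}D_{ca}t_a\bigr)$ has the correct sign; the paper's displayed quadratic writes $+$ there, which is a harmless slip. Your remark that $w_b\neq 0$ is exactly what justifies passing from $(\cdots)w_b=0$ to the scalar equation.
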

\begin{proof}
Recall that we defined a relation $>$ on $K$ by $a > b$ if $D_{ab} \neq 0$. Consider the restriction of this relation to $K_2$. Applying the proof of Lemma \ref{lem:min} to $(K_2 , > )$, we find an element $c \in K_2$ such that $c \ngtr b$ for all $b \in K_2$, $b \neq c$. That is, $D_{cb} = 0$ for all $b \in K_2 \setminus \{c\}$. Then
\[
w_c^2 = \lambda + F_c w_c + \sum_{a \in K_1} D_{ca}w_a + G.
\]
If $b \in K_2 \setminus \{c\}$, then $D_{cb} = 0$, so $w_c w_b = D_{bc}w_b$. Then
\begin{align*}
D_{bc}^2 w_b &= w_c^2 w_b \\
&= (\lambda + F_c w_c + \sum_{a \in K_1} D_{ca}w_a + G)w_b \\
&= 2Gw_b + F_c D_{bc}w_b + \sum_{a \in K_1} D_{ca} t_a w_b.
\end{align*}
Hence $D_{bc}$ is a root of
\[
x^2 -F_c x + (\sum_{a \in K_1} D_{ca}t_a + 2G).
\]
Thus, letting $\lambda_1,\lambda_2$ denote the roots of this quadratic, we have that $w_c w_b = \lambda_1 w_b$ or $\lambda_2 w_b$ for every $b \in K_2 \setminus \{c\}$.

If $\lambda_1 = \lambda_2$, we will take $K'_1 = K_1 \cup \{c\}$, $K_2' = K_2 \setminus \{c\}$. This is a block decomposition with $t'_a = t_a$ for $a \in K_1$ and $t'_c = \lambda_1$.

If $\lambda_2 \neq \lambda_2$, let $J_i = \{ b \in K_2 \setminus \{c\} \; | \; w_c w_b = \lambda_i w_b\}$ for $i=1,2$. Then $K_2 = \{c\} \cup J_1 \cup J_2$. Since $|K_2| > 1$ we can assume without loss of generality that $|J_2| > 0$. We will set $K'_1 = K_1 \cup \{c\} \cup J_1$ and $K'_2 = J_2$. If $b_1 \in J_1, b_2 \in J_2$, then $w_c w_{b_1} w_{b_2} = \lambda_1 w_{b_1}w_{b_2} = \lambda_2 w_{b_1} w_{b_2}$, so $w_{b_1}w_{b_2} = 0$. So we have a block decomposition with $t'_a = t_a$ for $a \in K_1$, $t'_c = \lambda_2$, $t'_b = 0$ for $b \in J_1$.
\end{proof}

\begin{proposition}\label{prop:w0}
Let $(a,b,c,\delta,w)$ be a weight system. Then after shifting weights, there exists an index $0 \in K$ such that $w_0 w_a = 0$ for all $a \neq 0$.
\end{proposition}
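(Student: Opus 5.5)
The plan is to iterate the block decompositions of Lemmas \ref{lem:bd1} and \ref{lem:bd2} until the second block is a single index, and then to shift the remaining weight vectors so that their products with that index vanish.

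First I shift the weights so that the weight system is based, at an isolated point or at a surface. This is allowed because weight shifting $w_a \mapsto w_a + t_a\cdot 1$ preserves the axioms (W1)--(W8) by the weight shifting lemma. The structure results of this section (Proposition \ref{prop:products}, Proposition \ref{prop:dabdba}, and the lemmas on the relation $>$) then apply, since we are in the range $p \ge 5$. If $|K| = 1$ there is nothing to prove, because the condition $w_0 w_a = 0$ for $a \neq 0$ is vacuous.

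Assume $|K| > 1$. By Lemma \ref{lem:bd1} there is a block decomposition $K = K_1 \cup K_2$ with constants $\{t_a\}_{a \in K_1}$. While $|K_2| > 1$, Lemma \ref{lem:bd2} produces a new block decomposition whose second block is a proper subset of the previous one. Since $K$ is finite, after finitely many steps we reach a block decomposition with $K_2 = \{k\}$ a single index and $K_1 = K \setminus \{k\}$. By the definition of a block decomposition, there are constants $t_a \in \mathbb{Z}_p$ for $a \neq k$ such that
\[
w_a w_k = t_a w_k \quad \text{for all } a \neq k .
\]

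Next I apply a second weight shift, replacing $w_a$ by $w_a' = w_a - t_a \cdot 1$ for each $a \neq k$ and leaving $w_k$ unchanged. This is again a weight system by the weight shifting lemma, although it need no longer be based. The statement only asks for the existence of such a shift, so losing basedness is harmless. With the new vectors, $w_a' w_k = w_a w_k - t_a w_k = 0$ for every $a \neq k$. Relabelling $k$ as $0$ gives the required index with $w_0 w_a = 0$ for all $a \neq 0$.

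The only point needing care is that the iteration is legitimate. Lemma \ref{lem:bd2} is stated for an arbitrary block decomposition of the same based weight system. We keep the same basing throughout the iteration and perform the final shift only at the very end, so its hypotheses hold at every stage.
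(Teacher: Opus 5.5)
Your proposal is correct and follows essentially the same argument as the paper. You base the system, apply Lemma~\ref{lem:bd1}, iterate Lemma~\ref{lem:bd2} until $|K_2| = 1$, and then shift each $w_a$ by $-t_a$ to get $w_a w_0 = 0$.
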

\begin{proof}
If $|K|=1$ there is nothing to prove, so assume $|K| > 1$. After shifting weights, we can assume the system is based. Then Lemma \ref{lem:bd1} implies a block decomposition $K_1,K_2$, $\{t_a\}_{a \in K_1}$ exists. Repeatedly applying Lemma \ref{lem:bd2}, we can assume the block decomposition has $|K_2| = 1$. So $K_2$ consists of a single element which we denote by $0$. Since this is a block decomposition, we have $w_a w_0 = t_a w_0$ for all $a \neq 0$. That is, $(w_a - t_a)w_0 = 0$. So after replacing each weight $w_a$ by $w_a - t_a$, we have $w_0 w_a = 0$ for all $a \neq 0$.
\end{proof}

Let $(a,b,c,\delta , w)$ be a weight system of type $(n,m)$ with index sets $I,J,K$ and assume $|K| = t+1$ where $t>0$. By Proposition \ref{prop:w0} we may assume, after shifting weights and re-indexing that $K = \{0, 1 , \dots , t\}$ and $w_0w_a = 0$ for all $a \neq 0$. Let $W_0 : H_{n,m} \to H_{n,m}$ be the endomorphism $W_0(x) = w_0 x$. Then $W_0(w_a) = 0$ for all $a \neq 0$. In the case of a based weight system, Lemma \ref{lem:cubic} shows that every $w_a$ satisfies a monic cubic equation. But if $w \in H_{n,m}$ satisfies a monic cubic equation, then so does any shift $w + t \cdot 1$ where $t \in \mathbb{Z}_p$. Applying this to $w_0$, we have
\[
w_0^3 = Aw_0^2 + Bw_0 + C
\]
for some $A,B,C \in \mathbb{Z}_p$. Hence also 
\begin{equation}\label{equ:W0cubic}
W_0^3 = AW_0^2 + BW_0 + C.
\end{equation}
We observe that $1,w_0,w_0^2 , \{ w_a\}_{a\neq 0}$ is a basis for $H_{n,m}$. Writing $H_{n,m} = V_1 \oplus V_2$, where $V_1$ is the span of $1,w_0,w_0^2$ and $V_2$ is the span of $\{ w_a \}_{a \neq 0}$. Then $W_0$ sends $V_i$ to itself for $i=1,2$ and $W_0|_{V_2} = 0$. Equation (\ref{equ:W0cubic}) applied to $w_a$ for any $a \neq 0$ gives $C=0$. The matrix of $W_0|_{V_1}$ with respect to $1,w_0,w_0^2$ is the companion matrix of the cubic $c(x) = x^3 - Ax^2 - Bx$. Every eigenvalue of $W_0$ is a root of $c(x)$.

Recall that $H_{n,m} = \bigoplus_{i \in I} \mathbb{Z}_p \oplus \bigoplus_{j \in J} \mathbb{Z}[\tau]/(\tau^2)$. For $i \in I = \{1,\dots , n\}$, let 
\[
p_i = (0 ,\dots , 0 , 1 , 0 , \dots , 0 ; 0 , \dots , 0),
\]
where the $1$ appears in the $i$-th position. Similarly for $j \in J = \{1, \dots , m\}$, let 
\[
q_j = (0 , \dots , 0 ; 0 , \dots , 0 , 1 , 0 , \dots , 0).
\]
Let $P_i \subseteq H_{n,m}$ denote the subspace spanned by $p_i$ and let $S_j \subseteq H_{n,m}$ denote the subspace spanned by $\{ q_j , \tau q_j \}$. Then $P_i,S_j$ are $W_0$-invariant subspaces of $H_{n,m}$. In particular, $W_0$ must acts on each $P_i$ as multiplication by some eigenvalue of $W_0$, so as multiplication by some root of $c(x)$. In the case of $S_j$, we have that $W_0$ commutes with multiplication by $\tau q_j$ and hence the action of $W_0$ on $S_j$ must be of the form
\[
W_0(q_j) = uq_j + v \tau q_j, \quad W_0( \tau q_j ) = u \tau q_j
\]
for some $u,v \in \mathbb{Z}_p$. In particular, $u$ must be an eigenvalue of $W_0$, hence a root of $c(x)$.

\begin{lemma}\label{lem:roots}
The roots of the cubic $c(x) = x^3 - Ax^2 - Bx$ all lie in $\mathbb{Z}_p$.
\end{lemma}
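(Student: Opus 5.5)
The plan is to compare two computations of the characteristic polynomial of the multiplication operator $W_0$ on $H_{n,m}$. One computation uses the decomposition $H_{n,m} = V_1 \oplus V_2$ established above. The other uses the explicit product structure of $H_{n,m}$ as a ring. The second computation will show that the characteristic polynomial splits into linear factors over $\mathbb{Z}_p$. Since $c(x)$ divides it, $c(x)$ must split as well.

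First step: compute $\det(x - W_0)$ from the decomposition. By the discussion preceding the lemma, $1, w_0, w_0^2, \{w_a\}_{a \neq 0}$ is a basis of $H_{n,m}$, and $W_0$ preserves both $V_1 = \mathrm{span}(1,w_0,w_0^2)$ and $V_2 = \mathrm{span}\{w_a\}_{a\neq 0}$. On $V_2$ we have $W_0|_{V_2} = 0$. On $V_1$, using $w_0^3 = Aw_0^2 + Bw_0$ (recall $C=0$), the matrix of $W_0$ is the companion matrix of $c(x)$. Hence
\[
\det(x - W_0) = c(x)\, x^{t}.
\]

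Second step: compute the same polynomial from the ring structure. Use the basis $\{p_i\}_{i\in I} \cup \{q_j, \tau q_j\}_{j \in J}$. Write $w_0 = ((w_0)_1,\dots,(w_0)_n ; v_1+u_1\tau, \dots, v_m + u_m\tau)$. Then $W_0 p_i = (w_0)_i p_i$. Also $W_0 q_j = v_j q_j + u_j \tau q_j$ and $W_0(\tau q_j) = v_j \tau q_j$. So the matrix of $W_0$ is block triangular with diagonal entries in $\mathbb{Z}_p$, and
\[
\det(x - W_0) = \prod_{i\in I}\bigl(x - (w_0)_i\bigr)\prod_{j \in J}(x - v_j)^2 .
\]

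Conclusion: $\mathbb{Z}_p[x]$ is a UFD, and $c(x)$ divides a product of monic linear factors with coefficients in $\mathbb{Z}_p$. Therefore $c(x) = x(x^2 - Ax - B)$ is itself a product of such factors, and every root of $c(x)$ is one of the components $(w_0)_i$ or $v_j$, in particular an element of $\mathbb{Z}_p$.

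The only point requiring care is that $1, w_0, w_0^2, \{w_a\}_{a\neq0}$ is genuinely a basis. This is where the first step could fail, since otherwise $V_1$ might have dimension less than $3$. The basis property is asserted just before the lemma, and I would take it as given. If it were doubted, a fallback argument works directly. Every eigenvalue of $W_0$ lies in $\mathbb{Z}_p$ by the second step. The minimal polynomial of $W_0$ divides $c(x)$, so each irreducible factor of that minimal polynomial is linear. Combining this with the first step's identification of the restriction to $V_1$ again yields the claim.
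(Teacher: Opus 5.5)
Your argument is correct, but it takes a different route from the paper.

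\textbf{The paper's route.} The paper argues by contradiction and uses only the relation $c(W_0)=0$. Since $u_1+u_2=A\in\mathbb{Z}_p$, either both nonzero roots lie in $\mathbb{Z}_p$ or neither does. In the latter case the only eigenvalue of $W_0$ lying in $\mathbb{Z}_p$ is $0$. The components $(w_0)_i$ and $v_j$ are eigenvalues in $\mathbb{Z}_p$, so they all vanish. Hence $W_0^2=0$, so $\langle w_0^2\rangle=0$, contradicting (W4).

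\textbf{Your route.} You identify $c(x)$ as the characteristic polynomial of $W_0|_{V_1}$ and compute $\det(x-W_0)$ in two bases. This rests on the basis claim for $1,w_0,w_0^2,\{w_a\}_{a\neq 0}$, which the paper asserts without proof. The claim is true:
\begin{itemize}
\item Suppose $w_0^2=\alpha+\beta w_0+\sum_{a\neq 0}\gamma_a w_a$. Pairing with $w_b$ ($b\neq 0$) and using $w_0w_b=0$ gives $\gamma_b=0$.
\item Then $1=\langle w_0^2\rangle=\alpha\langle 1\rangle=0$, a contradiction.
\item Combine this with the independence of $1,w_0,\dots,w_t$ and $\dim H_{n,m}=n+2m=t+3$. (The ``$n+2m=t-1$'' in the text is a misprint; your degree count $t+3$ is the right one.)
\end{itemize}
So (W4) enters your proof through the basis claim rather than directly.

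\textbf{What each buys.} Your route gives a stronger conclusion. With multiplicity, the diagonal entries $(w_0)_i$, $v_j$, $v_j$ of $W_0$ are exactly the roots $0^{t+1},u_1,u_2$ of $x^t c(x)$. This already constrains the possible shapes of $w_0$ analysed in Lemma~\ref{lem:w0cases}. The paper's route is lighter: it needs only that $w_0$ satisfies the cubic, not the companion-matrix description.

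\textbf{Your fallback.} As written, it is not independent of the basis claim, because it again invokes the identification of $W_0|_{V_1}$ with the companion matrix. A genuinely independent fallback is essentially the paper's argument. The minimal polynomial of $W_0$ divides $c(x)$, and by your second step it splits over $\mathbb{Z}_p$. If $x^2-Ax-B$ had no root in $\mathbb{Z}_p$, then $0$ would be the only eigenvalue. The block forms on the $P_i$ and $S_j$ then force $W_0^2=0$, hence $w_0^2=0$, contradicting (W4).
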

\begin{proof}
Let $0,u_1,u_2$ be the roots of $x^3 - Ax^2 - Bx$, where $u_1,u_2$ possibly lie in an extension of $\mathbb{Z}_p$. We need to show $u_1,u_2 \in \mathbb{Z}_p$. Since $u_1 + u_2 = A \in \mathbb{Z}_p$ either $u_1$ and $u_2$ both lie in $\mathbb{Z}_p$, or neither do.

Suppose that $u_1$ and $u_2$ do not lie in $\mathbb{Z}_p$. Then the only eigenvalue of $W_0$ that lies in $\mathbb{Z}_p$ is $0$. Hence $W_0|_{P_i} = 0$ for each $i$ and $W_0|_{S_j}$ has the form $W_0(q_j) = v \tau q_j$, $W_0( \tau q_j) = 0$. So $W_0^2 = 0$ and therefore $w_0^2 = W_0^2(1) = 0$. But this would give $\langle w_0^2 \rangle = 0$ which contradicts $(W4)$. So $u_1,u_2$ lie in $\mathbb{Z}_p$.
\end{proof}

Let $0,u_1,u_2$ denote the roots of $c(x)$. By Lemma \ref{lem:roots}, $u_1,u_2 \in \mathbb{Z}_p$. Then for each $i \in I$, $W_0|_{P_i}$ is multiplication by $0,u_1$ or $u_2$. For each $j \in J$, the matrix of $W_0|_{S_j}$ with respect to $q_j , \tau q_j$ has the form
\[
\left[ \begin{matrix} 0 & 0 \\ v & 0 \end{matrix} \right], \quad \left[ \begin{matrix} u_1 & 0 \\ v & u_1 \end{matrix} \right] \text{ or } \left[ \begin{matrix} u_2 & 0 \\ v & u_2 \end{matrix} \right], 
\]
for some $v \in \mathbb{Z}_p$.

\begin{lemma}\label{lem:w0cases}
After re-indexing $I,J$, $w_0$ has one of the following three forms:

{\bf Case 1.}
\[
w_0 = (u_1 , u_2 , 0 , \dots , 0 ; 0 , \dots , 0)
\]
for some $u_1,u_2 \neq 0$. Furthermore $u_1 \neq u_2$.

{\bf Case 2.}
\[
w_0 = (0 , \dots , 0 ; u+v\tau , 0 , \dots , 0)
\]
for some $u,v \neq 0$.

{\bf Case 3.}
\[
w_0 = (u , 0 , \dots , 0 ; v\tau , 0 , \dots , 0)
\]
for some $u,v \neq 0$.

\end{lemma}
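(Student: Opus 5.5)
The plan is to read off the shape of $w_0$ from the rank and eigenvalue structure of the multiplication operator $W_0$, using only the decomposition $H_{n,m} = V_1 \oplus V_2$ and the description of $W_0$ on each $P_i$ and $S_j$ already obtained above.

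\textbf{Step 1: $W_0$ has rank exactly $2$.} We have $W_0|_{V_2} = 0$. On $V_1$, with basis $1, w_0, w_0^2$, the operator $W_0$ sends
\[
1 \mapsto w_0, \qquad w_0 \mapsto w_0^2, \qquad w_0^2 \mapsto A w_0^2 + B w_0,
\]
using $C=0$. So its image is the span of $w_0, w_0^2$, which is $2$-dimensional because $1, w_0, w_0^2, \{w_a\}_{a \neq 0}$ is a basis.

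\textbf{Step 2: each nonzero eigenvalue has geometric multiplicity at most $1$.} Take $u \neq 0$. Any $u$-eigenvector of $W_0$ lies in $V_1$, since $W_0 = 0$ on $V_2$. On $V_1$, $W_0$ is a companion matrix, hence cyclic, so each eigenvalue has a $1$-dimensional eigenspace.

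\textbf{Step 3: compute rank contributions block by block.} By the discussion preceding Lemma \ref{lem:roots}, $W_0$ is block diagonal with respect to $H_{n,m} = \bigoplus_i P_i \oplus \bigoplus_j S_j$. Since $w_0 = W_0(1) = \sum_i W_0(p_i) + \sum_j W_0(q_j)$, the component of $w_0$ at $x_i$ is the scalar by which $W_0$ acts on $P_i$, and the component at $\Sigma_j$ is $u + v\tau$, where $W_0|_{S_j}$ has matrix $\left[\begin{matrix} u & 0 \\ v & u \end{matrix}\right]$. The rank contributions are therefore:
\begin{itemize}
\item[(a)] a point with nonzero component contributes $1$;
\item[(b)] a surface with $u \neq 0$ contributes $2$;
\item[(c)] a surface with $u=0$, $v \neq 0$ contributes $1$;
\item[(d)] all other components contribute $0$.
\end{itemize}

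\textbf{Step 4: enumerate the ways to reach rank $2$.} The total rank must equal $2$, which leaves four possibilities.
\begin{itemize}
\item[(i)] Two points with nonzero components $u_1, u_2$. If $u_1 = u_2$, then $p_1$ and $p_2$ would both be eigenvectors for the same nonzero eigenvalue, contradicting Step 2. So $u_1 \neq u_2$, which is Case 1.
\item[(ii)] One surface with $u \neq 0$. If $v = 0$, then $q_j$ and $\tau q_j$ would both be $u$-eigenvectors, again contradicting Step 2. So $v \neq 0$, which is Case 2.
\item[(iii)] One point with $u \neq 0$ and one surface of type (c). This is Case 3.
\item[(iv)] Two surfaces of type (c). Then $W_0^2 = 0$ on every block, so $w_0^2 = W_0^2(1) = 0$, contradicting $\langle w_0^2 \rangle = 1$ from (W4), exactly as in the proof of Lemma \ref{lem:roots}.
\end{itemize}
After re-indexing $I$ and $J$, this gives exactly the three listed forms.

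The step needing the most care is Step 2, the multiplicity bound. It relies on the eigenspaces for nonzero eigenvalues lying entirely inside $V_1$, together with the cyclicity of a companion matrix. Once this is in place, the rest is bookkeeping.
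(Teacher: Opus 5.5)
Your proof is correct and follows the paper's strategy: bound the rank of $W_0$ using $W_0|_{V_2}=0$ and the companion-matrix structure on $V_1$, read off the components of $w_0$ from the blocks $P_i$, $S_j$, enumerate the four ways to reach rank $2$, and rule out the two-nilpotent-surfaces case via $\langle w_0^2\rangle=1$. There are two local differences. You obtain rank exactly $2$ at once from the basis $1,w_0,w_0^2,\{w_a\}_{a\neq 0}$, whereas the paper argues via a nonzero root of $c(x)$ and excludes a $2$-dimensional kernel on $V_1$. You also exclude $u_1=u_2$ and $v=0$ by cyclicity of the companion matrix, whereas the paper notes these would give $w_0^2=uw_0$ and hence $\langle w_0^2\rangle=u\langle w_0\rangle=0$, contradicting (W4).
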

\begin{proof}
Recall that $W_0|_{V_2} = 0$. Also, $W_0|_{V_1}$ has characteristic polynomial $c(x)$. Since $0$ is a root of $c(x)$, the kernel of $W_0|_{V_1}$ is at least $1$-dimensional. So $dim( Ker(W_0 ) ) \ge t+1$.

We claim that $c(x)$ has a non-zero root. Assume this is not the case. Then every eigenvalue of $W_0$ is zero. Then $W_0|_{P_i} = 0$ for each $i$ and $(W_0)^2|_{S_j} = 0$ for each $j$. This would imply $W_0^2 = 0$ and hence $w_0^2 = W_0^2(1) = 0$, which contradicts $(W4)$. In particular, $dim( Ker(W_0|_{V_1})) = 1$ or $2$. 

If $dim(Ker(W_0)|_{V_1}) = 2$, then since $W_0$ has a non-zero eigenvalue in $\mathbb{Z}_p$ we must have that $w_0 = (u , 0 , \dots , 0 ; 0 , \dots , 0)$ for some $u \neq 0$. In this case $w_0^2 = uw_0$ and hence $\langle w_0^2 \rangle = u \langle w_0 \rangle = 0$, contradicting $(W4)$. So $dim( Ker(W_0|_{V_2}) ) = 1$ and hence $dim( Ker(W_0) ) = t+1$. It follows that $w_0$ must belong to one of the following four cases (after re-indexing $I,J$):

\begin{itemize}
\item[(1)]{$w_0 = (u_1 , u_2 , 0 , \dots , 0 ; 0 , \dots , 0)$ for some $u_1,u_2 \neq 0$.}
\item[(2)]{$w_0 = (0 , \dots , 0 ; u+v\tau , 0 , \dots , 0)$ for some $u,v$, $u \neq 0$.}
\item[(3)]{$w_0 = (u , 0 , \dots , 0 ; v\tau , 0 , \dots , 0)$ for some $u,v \neq 0$.}
\item[(4)]{$w_0 = (0, \dots , 0 ; v_1 \tau , v_2 \tau , 0 , \dots , 0)$ for some $v_1, v_2 \neq 0$.}
\end{itemize}

In case (4) we have $w_0^2 = 0$ which contradicts $(W4)$, so this case is ruled out. In case (1) we must have $u_1 \neq u_2$ for if $u_1 = u_2$, then $w_0^2 = u_1 w_0$ and so $\langle w_0^2 \rangle = u_1 \langle w_0 \rangle = 0$, which contradicts $(W4)$. Similarly in case (2) we must have $v \neq 0$ otherwise $w_0^2 = u w_0$.

\end{proof}

\begin{theorem}\label{thm:wtdec}
Let $W = (a,b,c,\delta,w)$ be a weight system over $\mathbb{Z}_p$, $p \ge 7$ of type $(n,m)$ with $n+2m = t-1$ and $t > 0$. Then after shifting weights, $W$ is a connected sum of weight systems $W_1, W_2$ of types $(3,0), (n-1,m)$, or $(1,1), (n+1,m-1)$, or $(1,1), (n-1,m)$.
\end{theorem}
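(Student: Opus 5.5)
Using Proposition \ref{prop:w0}, we shift weights and re-index so that $K=\{0,1,\dots,t\}$ and $w_0w_a=0$ for all $a\neq 0$. Lemma \ref{lem:w0cases} then puts $w_0$ in one of three forms, and these match the three types in the statement. In Case 1, $w_0=(u_1,u_2,0,\dots;0,\dots)$, and the support $\{1,2\}$ of $w_0$ together with one further point should form a $(3,0)$ summand. In Case 2, $w_0$ is supported on the surface $j=1$, with $w_0|_{S_1}=u+v\tau$, and this should split off a $(1,1)$ summand, since an isolated point of $\mathbb{CP}^2$ is absorbed into the surface connected sum. This accounts for the type pair $(1,1),(n+1,m-1)$: the remaining system gains a point. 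In Case 3, $w_0=(u;v\tau)$ is supported on one point and one surface, and this is the $(1,1),(n-1,m)$ connected sum taken at an isolated point.

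In each case the plan has three steps. \emph{Step 1.} Because $w_0w_a=0$, every $w_a$ with $a\neq 0$ vanishes on the support of $w_0$. For the surface factor, ``vanishes'' has to be read appropriately: $w_a$ can be nonzero only along $\tau$ there, which after the connected sum becomes the $u$-part of the glued surface. \emph{Step 2.} Define $W_2$ by restricting $1,\rho,e$ and the $w_a$ ($a\neq 0$) to the complement of the support of $w_0$, adjoining a new basepoint (or surface) carrying the data determined by the $w_0$-factor. Then check (W1)--(W8) for $W_2$. The bilinear form splits orthogonally because products $w_0w_a$ vanish. The pairing identities (W2)--(W5) for $W_2$ follow by subtracting the corresponding identities for the small factor. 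The cubic and quartic identities (W6)--(W8) split because $w(\epsilon)^k=(\epsilon_0w_0)^k+w'(\epsilon')^k$. \emph{Step 3.} Define $W_1$ from $w_0$ together with $\rho,e$ restricted to its support plus the glued component. Show that $W_1$ is a weight system of type $(3,0)$ or $(1,1)$; Proposition \ref{prop:t1} then identifies it with a linear $\mathbb{CP}^2$ action, and the matching conditions $e(1)_{i_1}=-e(2)_{i_2}$, $\rho(1)_{i_1}=\rho(2)_{i_2}$ (or $c_{j_1}=c_{j_2}$) will hold by construction.

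The main obstacle is Step 2 and Step 3: producing the missing fixed-point datum for the glued component. For example, in Case 1 we need a third point of $W_1$ with $(a,b)$ such that $\langle 1\rangle_{W_1}=0$. To get it, I would apply the linear algebra of Proposition \ref{prop:t1} to the two coordinates of $w_0$. Conditions (W3) and (W4) for $w_0$ give $\alpha_1u_1+\alpha_2u_2=0$ and $\alpha_1u_1^2+\alpha_2u_2^2=1$, so $\alpha_1,\alpha_2$ are those of $\mathbb{CP}^2$ with weights $(u_1,u_2)$. Setting $\alpha_0=1/(u_1u_2)$ then defines $W_1$. The remaining checks are $\rho$ and $e$ on $\{1,2\}$. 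I would get them by applying the $\epsilon_0$-, $\epsilon_0^2$- and $\epsilon_0\epsilon_a$-coefficients of (W6)--(W8), restricted via $w_0$, and repeating the computation of Proposition \ref{prop:t1}. This shows $\rho|_{\{1,2\}}$ and $e|_{\{1,2\}}$ take the $\mathbb{CP}^2$ values; (W8) pins down the constant $t$ and needs $p\ge 7$. The glued point of $W_2$ gets the negated $e$ and the same $\rho$ as the basepoint of $W_1$, and (W2) and (W5) for $W_2$ follow by subtraction. Cases 2 and 3 go the same way, with the $(1,1)$ model: $\delta$ is forced to be $1$, and the $\tau$-coefficient $v$ plays the role of $w_2=\pm1$ up to scaling.
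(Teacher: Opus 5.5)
Your outline follows the paper's proof: normalise with Proposition \ref{prop:w0}, split along the three cases of Lemma \ref{lem:w0cases}, get the $\alpha$'s of the small factor from (W3)--(W4), split (W6) by the sign of $\epsilon_0$, and use (W8) to fix $\rho$ on the support of $w_0$. However, your claim that (W7) and (W8) split like (W6) is false, and the logical order you build on it fails. In (W6) the $w_0$-part carries a factor $\epsilon_0$, so taking $\epsilon_0=\pm1$ separates the two factors. In (W7) and (W8) it enters only through $\epsilon_0^2=\epsilon_0^4=1$, and the global terms $\langle\rho^2\rangle$, $\langle e^2\rangle$ are shared. So all you get is that the (W7)-expressions of $W_1$ and $W_2$ add up to zero; for the same reason there is no ``$\epsilon_0^2$-coefficient'' to extract. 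Hence you cannot first prove that $W_1$ is a weight system and then apply Proposition \ref{prop:t1}, which itself needs (W5) and (W7) for $W_1$. You must do it the paper's way. Take (W8) with $a=0$, $\epsilon_0=1$ (so $w=3w_0+w'$), subtract $4\times$(W7), use (W6), and cancel a constant whose only prime factors are $2,3,5$ (this is where $p\ge 7$ enters). This gives $5\langle w_0^4\rangle-\langle w_0^2\rho\rangle=3\langle w_0^3\rangle^2$. Together with (W3), (W4), (W6), this identifies $W_1$ explicitly with a linear $\mathbb{CP}^2$ system, and only then do (W5), (W7), (W8) for $W_2$ follow by subtraction.

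You also have the gluing in Cases 2 and 3 the wrong way round. Case 2 is the sum at an isolated point. There $W_1$ is a \emph{new} point with weights $(u,u)$, at which it is based, plus all of $S_1$, and $W_2$ receives a new point $(u,-u)$; (W3), (W4), (W6) then force $c_1=uv$, $\delta_1=1$, $v=\pm1$. Case 3 is the sum along a surface. A point-sum with a $(1,1)$ summand would leave $W_2$ of type $(n+1,m-1)$, and $W_1$ is not based at its point anyway, since $w_0=u\neq 0$ there. Instead $W_1$ is point $1$ plus a copy of $S_1$ with $\delta=1$. Meanwhile $W_2$ keeps $S_1$ with self-intersection $\delta_1-1$ and the $\tau$-parts $t_a\tau$ of the $w_a$, which is what your Step 1 remark about the glued surface describes. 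Here $\delta=1$ is a choice imposed by (W2) for $W_1$, not a property of $\delta_1$. Moreover, point $1$ keeps its original weights $(a_1,b_1)$, and (W3), (W4), (W6) give only $a_1b_1=u^2$, $c_1=-uv$ and $a_1^2+b_1^2=u^2(1+v^2)$. The (W8) identity above supplies $a_1^2+b_1^2=2u^2$, whence $(a_1,b_1)\sim(u,u)$ and $v=\pm1$. So running the (W8) argument in every case, as you propose, is genuinely needed in Case 3. The paper's write-up there simply declares $(a(1)_0,b(1)_0)=(u,u)$ without this step.
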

\begin{proof}
After weight shifting one of the three cases in Lemma \ref{lem:w0cases} must hold. We consider each case separately.

{\bf Case 1.} We have $w_0 = (u_1 , u_2 , 0 , \dots , 0 ; 0 , \dots , 0 )$ for some $u_1,u_2 \neq 0$, $u_1 \neq u_2$. Of course this implies $n \ge 2$. We will decompose $(a,b,c,\delta,w)$ into a connected sum of weight systems $(a(1),b(1),c(1),\delta(1),w(1))$, $(a(2),b(2),c(2),\delta(2),w(2))$ of type $(3,0)$ and $(n-1,m)$ respectively.

For the weight system of type $(3,0)$ we will take $I(1) = \{ 0,1,2 \}$, $K(1) = \{0\}$, $(a(1)_0,b(1)_0) = (u_1,u_2)$, $(a(1)_1,b(1)_1) = (a_1,b_1)$, $(a(1)_2 , b(1)_2) = (a_2 , b_2)$, $w_0(1) = (0,u_1,u_2)$.

For the weight system of type $(n-1,m)$ we will take $I(2) = \{ 0 , 3, 4 , \dots , n\}$, $J(2) = J$, $K(2) = \{ 1 , 2 , \dots , m\}$, $(a(2)_0,b(2)_0) = (u_1, -u_2)$, $( a(2)_i , b(2)_i ) = (a_i,b_i)$ for $3 \le i \le n$, $c(2)_j = c_j$, $\delta(2)_j = \delta_j$ for all $j \in J$. To define $w(2)_a$ for $a \in K(2)$, first note that $w_0 w_a = 0$, which implies $w_a$ has the form
\[
w_a = ( 0 , 0 , (w_a)_3 , (w_a)_4 , \dots , (w_a)_n ; (w'_a)_1 , \dots , (w'_a)_m )
\]
for some $(w_a)_i \in \mathbb{Z}_p$, $(w'_a)_j \in \mathbb{Z}_p[\tau]/(\tau^2)$. We then set
\[
w(2)_a = ( 0 , (w_a)_3 , (w_a)_4 , \dots , (w_a)_n ; (w'_a)_1 , \dots , (w'_a)_m ).
\]

We need to check that $W_1 = \{ (a(1),b(1),c(1),\delta(1),w(1))\}$, $W_2$ = $\{(a(2)$,$b(2)$, $c(2)$,$\delta(2)$,$w(2))\}$ are weight systems. Clearly $(W1)$ holds. Let us denote by $\langle \; \; \rangle_1 : H_{3,0} \to \mathbb{Z}_p$ and $\langle \; \; \rangle_2 : H_{n-1,m} \to \mathbb{Z}_p$ the maps determined by the corresponding weight systems. We also write $e(1),\rho(1) \in H_{3,0}$, $e(2),\rho(2) \in H_{n-1,m}$ for the Euler and Pontryagin classes of the two weight systems. First of all, note that
\[
\langle w_0^k(1) \rangle_1 = \langle w_0^k \rangle \text{ for all } k \ge 1.
\]
In particular, setting $k=1,2$, we see that $W_1$ satisfies $(W3), (W4)$. Similar reasoning shows that $W_2$ satisfies $(W3), (W4)$. Write $\alpha_i = 1/(a_i b_i)$, for $i=0,2,3$. Then for $f = (f_0,f_1,f_2) \in H_{3,0}$, we have
\[
\langle f \rangle_1 = \alpha_0 f_0 + \alpha_1 f_1 + \alpha_2 f_2.
\]
In particular, since $(W3)$, $(W4)$ hold for $W_1$, we get
\begin{align*}
\alpha_1 u_1 + \alpha_2 u_2 &= 0, \\
\alpha_1 u_1^2 + \alpha_2 u_2^2 &= 1.
\end{align*}
Since $u_1 \neq u_2$, this system of equations for $\alpha_1,\alpha_2$ has a unique solution. Together with $\alpha_0 = 1/(u_1 u_2)$, this gives
\[
(\alpha_0 , \alpha_1 , \alpha_2) = \left( \frac{1}{u_1u_2} , \frac{1}{u_1(u_1-u_2)} , \frac{1}{u_2(u_2-u_1)} \right).
\]
Then it follows that $\langle 1 \rangle_1 = \alpha_0 + \alpha_1 + \alpha_2 = 0$, hence $W_1$ satisfies $(W2)$. Noting that $0 = \langle 1 \rangle = \langle 1 \rangle_1 + \langle 1 \rangle_2$, it follows that $W_2$ also satisfies $(W2)$.

Write $w(\epsilon,1) = \epsilon_0 w_0(1)$, $w(\epsilon,2) = \sum_{a \neq 0} \epsilon_a w_a(2)$. Since $w_0 w_a = 0$ for $a \neq 0$, it follows easily that 
\[
\langle w(\epsilon)^k \rangle = \langle w(\epsilon,1)^k \rangle_1 + \langle w(\epsilon,2)^k \rangle_2 \text{ for any } k \ge 1.
\]
Similar reasoning gives that
\[
\langle w(\epsilon)^k \rho \rangle = \langle w(\epsilon,1)^k \rho(1) \rangle_1 + \langle w(\epsilon,2)^k \rho(2) \rangle_2 \text{ for any } k \ge 1.
\]

Hence $(W6)$ for the weight system $W$ gives
\[
\left( \langle w(\epsilon,1)^3 \rangle_1 - \langle w(\epsilon,1) \rho(1) \rangle_1 \right) + \left( \langle w(\epsilon,2)^3 \rangle_2 - \langle w(\epsilon,2) \rho(2) \rangle_2 \right) = 0 \text{ for all } \epsilon.
\]
By considering $\epsilon_0 = 1$ and $\epsilon_0 = -1$, we see that
\[
\langle w(1)^3_0 \rangle_1 - \langle w(1)_0 \rho(1) \rangle_1 = 0
\]
and
\[
\langle w(\epsilon,2)^3 \rangle_2 - \langle w(\epsilon,2) \rho(2) \rangle_2 = 0 \text{ for all } (\epsilon_1 , \dots , \epsilon_t).
\]

So $W_1,W_2$ both satisfy $(W6)$. The same argument does not apply to $(W7), (W8)$. The reason for this is that $(W6)$ only involves odd powers of $w(\epsilon)$ whereas $(W7), (W8)$ involve even powers.

If we write $\rho(1) = ( \rho_0 , \rho_1 , \rho_2)$, then $(W6)$ for $W_1$ gives
\[
\frac{\rho_1}{(u_1-u_2)} + \frac{\rho_2}{(u_2-u_1)} = \frac{u_1^2}{u_1-u_2} + \frac{u_2^2}{u_2-u_1} = u_1 + u_2,
\]
which simplifies to
\begin{equation}\label{equ:p1p2}
\rho_1 - \rho_2 = u_1^2 - u_2^2.
\end{equation}

Consider $(W8)$ for $W$ with $a = 0$. Since $w_0 w_a = 0$ for $a \neq 0$, we have $w_0 w(\epsilon) = w_0^2$. If $w = 2w_0 + w(\epsilon)$, then $w^2 = 8w_0^2 + w(\epsilon)^2$, $w^3 = 26w_0^2 + w(\epsilon)^3$, $w^4 = 80w_0^4 + w(\epsilon)^4$. Substituting into $(W8)$ and using $(W7)$ to simplify, we arrive at
\[
1200 \langle w_0^4 \rangle - 240 \langle w_0^2 \rho \rangle = 5( 13\langle w_0^3 \rangle - \langle w_0 \rho \rangle )^2.
\]
But since $(W6)$ holds for $W_2$, we see that $13 \langle w_0^3 \rangle - \langle w_0 \rho \rangle = 12 \langle w_0^3 \rangle$, so the above simplifies to
\[
1200 \langle w_0^4 \rangle - 240 \langle w_0^2 \rho \rangle = 720 \langle w_0^3 \rangle^2.
\]
Since $p \ge 7$ we can cancel a common factor of $240$ to get
\begin{equation}\label{equ:w04}
5 \langle w_0^4 \rangle - \langle w_0^2 \rho \rangle = 3 \langle w_0^3 \rangle^2.
\end{equation}
We have $\langle w_0^3 \rangle = u_1+u_2$, $\langle w_0^2 \rho \rangle = (u_1\rho_1 - u_2\rho_2)/(u_1-u_2) = \rho_2 + u_1^2 + u_1 u_2$ and $\langle w_0^4 \rangle = u_1^2 + u_1 u_2 + u_2^2$. Substituting into Equation (\ref{equ:w04}) gives $\rho_2 = u_2^2 + (u_1 - u_2)^2$ and hence $\rho_1 = u_1^2 + (u_1 - u_2)^2$. We also have $\rho_0 = a_0^2 + b_0^2 = u_1^2 + u_2^2$, giving
\[
\rho(1) = (\rho_0 , \rho_1 , \rho_2) = \left( u_1^2 + u_2^2 , u_1^2 + (u_1-u_2)^2 , u_2^2 + (u_1 - u_2)^2 \right).
\]
Then a direct calculation shows that $\langle \rho(1) \rangle = 3$, so $W_1$ satisfies $(W5)$. 

Then $(W5)$ applied to $W$ gives
\[
3t + 3 = \langle \rho \rangle = \langle \rho(1) \rangle_1 + \langle \rho(2) \rangle_2 = 3 + \langle \rho(2) \rangle_2,
\]
hence $(W5)$ also holds for $W_2$.

We have shown that:

\begin{align*}
\rho(1) &= ( u_1^2 + u_2^2 , u_1^2 + (u_1 - u_2)^2 , u_2^2 + (u_1 - u_2)^2 ), \\
e(1) &= ( u_1 u_2 , u_1(u_1-u_2) , u_2(u_2-u_1) ), \\
w(1)_0 &= ( 0 , u_1 , u_2 ).
\end{align*}

It follows that (up to equivalence) we have
\begin{align*}
(a_0,b_0) &= (u_1,u_2),\\
(a_1,b_1) &= (-u_1 , u_2-u_1), \\
(a_2,b_2) &= (-u_2 , u_1-u_2).
\end{align*}

This shows that $W_1$ is the weight system of the linear $\mathbb{CP}^2$-action given by $[x,y,z] \mapsto [x, \omega^{u_1}y , \omega^{u_2}z]$. In particular, $W_1$ is a weight system, so $(W7)$ and $(W8)$ hold for $W_1$. Now since $(W7), (W8)$ hold for both $W$ and $W_1$, it follows easily that $(W7), (W8)$ also hold for $W_2$. So $W_2$ is also a weight system and $W$ is easily seen to be the connected sum of $W_1$ and $W_2$.

{\bf Case 2.} We have $w_0 = (0  , \dots , 0 ; u+v\tau , 0 , \dots , 0 )$ for some $u,v \neq 0$. Of course this implies $m > 0$. We will decompose $W$ into a connected sum of weight systems $W_1 = \{ (a(1),b(1),c(1),\delta(1),w(1))\}$, $W_2 = \{(a(2),b(2),c(2),\delta(2),w(2))\}$ of type $(1,1)$ and $(n+1,m-1)$ respectively.

For $W_1$ we will take $I(1) = \{ 0 \}$, $J(1) = \{1\}$, $K(1) = \{0\}$, $(a(1)_0,b(1)_0) = (u,u)$, $c(1)_1 = c_1$, $\delta(1)_1 = \delta_1$, $w(1)_0 = ( 0 ; u + v\tau)$. For $W_2$ we will take $I(2) = \{ 0 , 1 , \dots , n\}$, $J(2) = \{2 , \dots , m\}$ ($J(2)$ is empty if $m=1$), $K(2) = \{1, \dots , t\}$, $(a(2)_0 , b(2)_0) = (u,-u)$, $(a(2)_i , b(2)_i) = (a_i,b_i)$ for $1 \le i \le n$, $c(2)_j = c_j$ and $\delta(2)_j = \delta_j$ for $j \in J(2)$. The weights $w(2)_a$ will be defines as follows. If $a \neq 0$, then $w_0 w_a = 0$, which means $w_a$ has the form
\[
w_a = ( (w_a)_1 , \dots , (w_a)_n ; 0 , (w'_a)_2 , \dots , (w'_a)_m ).
\]
Then we take
\[
w(2)_a = ( 0 , (w_a)_1 , \dots , (w_a)_n ; (w'_a)_2 , \dots , (w'_a)_m ).
\]

We check that $W_1,W_2$ are weight systems. Clearly $(W1)$ holds for $W_1,W_2$. As in case 1 we use notation such as $\langle \; \rangle_i , e(i), \rho(i)$. Arguing as in Case 1, we see that $(W3), (W4), (W6)$ hold for both $W_1,W_2$. Define $\alpha_0 = 1/u^2$, $\alpha_1 = -\delta_1/c_1^2$, $\alpha_2 = 1/c_1$. Then for $f = (f_0 ; f_1 + f_2 \tau) \in H_{1,1}$, we have
\[
\langle f \rangle_1 = \alpha_0 f_0 + \alpha_1 f_1 + \alpha_2 f_2.
\]
Since $(W3)$, $(W4)$ hold for $W_1$, we get
\begin{align*}
u \alpha_1 + v \alpha_2 &= 0, \\
u^2 \alpha_1 + 2uv \alpha_2 &= 1.
\end{align*}

Since $u,v \neq 0$, we can uniquely solve for $\alpha_1, \alpha_2$, giving
\[
(\alpha_0 , \alpha_1 , \alpha_2 ) = \left( \frac{1}{u^2} , -\frac{1}{u^2} , \frac{1}{uv} \right).
\]
Thus, $\langle 1 \rangle_1 = \alpha_0 + \alpha_1 = 0$. So $(W2)$ holds for $W_1$, which implies that it also holds for $W_2$.

Since $\alpha_2 = 1/(uv) = 1/c_1$, this gives $c_1 = uv$. Since $\alpha_1 = -1/u^2 = -\delta_1/c_1^2$, this gives $\delta_1 = c_1^2/u^2 = v^2$.

We have $\rho(1) = ( 2u^2 , c_1^2 + 2c_1 \delta_1 \tau)$. Then from $(W6)$ for $W_1$, we get
\[
uv = c_1\delta_1 = uv \delta_1
\]
hence $\delta_1 = 1$ as $u,v \neq 0$. So then $v^2 = \delta_1 = 1$, so that $v = \pm 1$. We have shown that $W_1$ is of the form
\[
(a(1)_0,b(1)_0) = (u,u),  c(1)_1 = vu, \delta(1)_1 = 1, w(1)_0 = ( 0 , u + v\tau ), \text{ where } v = \pm 1.
\]
This is the weight system corresponding to a linear $\mathbb{CP}^2$-action of the form $[x,y,z] \mapsto [x , \omega^{u}y , \omega^{u}z]$. In particular $W_1$ is a weight system, so it satisfies $(W1)-(W8)$. Then since $W$ and $W_1$ both satisfy $(W1)-(W8)$, it follows that $W_2$ satisfies $(W5),(W7)$ and $(W8)$. We have that $W$ is the connected sum of $W_1$ and $W_2$.

{\bf Case 3.} We have $w_0 = (u , 0 , \dots , 0 ; v\tau , 0 , \dots , 0)$ for some $u,v \neq 0$ (hence $n,m > 0$). We will decompose $W$ into a connected sum of weight systems $W_1 = \{ (a(1),b(1),c(1),\delta(1),w(1))\}$, $W_2 = \{(a(2),b(2),c(2),\delta(2),w(2))\}$ of type $(1,1)$ and $(n-1,m)$ respectively.

For $W_1$ we will take $I(1) = \{ 0 \}$, $J(1) = \{0\}$, $K(1) = \{0\}$, $(a(1)_0,b(1)_0) = (u,u)$, $c(1)_0 = c_1$, $\delta(1)_0 = 1$, $w(1)_0 = ( u ; v \tau )$. For $W_2$ we will take $I(2) = \{2 , \dots , n\}$, $J(2) = \{ 1, \dots , m\}$, $K(2) = \{1, \dots , t\}$, $(a(2)_i , b(2)_i ) = (a_i,b_i)$ for all $i \in I(2)$, $c(2)_j = c_j$ for $j \in J(2)$, $\delta(2)_j = \delta_j$ for $j \in J(2) \setminus \{1\}$ and $\delta(2)_1 = \delta_1 - 1$. The weights $w(2)_a$ will be defined as follows. If $a \neq 0$ then $w_0 w_a = 0$, so $w_a$ has the form
\[
w_a = ( 0 , (w_a)_2 , \dots , (w_a)_n ; t_a \tau , (w'_a)_2 , \dots , (w'_a)_m )
\]
for some $(w_a)_i , t_a \in \mathbb{Z}_p$, $(w'_a)_j \in \mathbb{Z}_p[\tau]/(\tau^2)$. Then we take
\[
w(2)_a = ( (w_a)_2 , \dots , (w_a)_n ; t_a \tau , (w'_a)_2 , \dots , (w'_a)_m ).
\]

We check that $W_1,W_2$ are weight systems. Clearly $(W1)$ holds for $W_1,W_2$. Arguing as in Case 1,2, we see that $(W3), (W4), (W6)$ hold for both $W_1,W_2$. Define $\alpha_0 = 1/u^2$, $\alpha_1 = -1/c_1^2$, $\alpha_2 = 1/c_1$. Then for $f = (f_0 ; f_1 + f_2 \tau) \in H_{1,1}$, we have
\[
\langle f \rangle_1 = \alpha_0 f_0 + \alpha_1 f_1 + \alpha_2 f_2.
\]
From $(W3)$ for $W_1$, we get
\[
c_1 = -uv,
\]
hence
\[
(\alpha_0 , \alpha_1 , \alpha_2 ) = \left( \frac{1}{u^2} , -\frac{1}{u^2 v^2} , -\frac{1}{uv} \right).
\]

From $(W6)$ for $W_1$, we get $u = -c_1v$. Hence $u = -c_1 v = uv^2$ and hence $v^2 = 1$. So $v = \pm 1$. We have shown that $W_1$ is of the form
\[
(a(1)_0,b(1)_0) = (u,u),  c(1)_0 = -vu, \delta(1)_0 = 1, w(1)_0 = ( u ; v\tau ), \text{ where } v = \pm 1.
\]
This is the weight system corresponding to a linear $\mathbb{CP}^2$-action of the form $[x,y,z] \mapsto [x , \omega^{u}y , \omega^{u}z]$. In particular $W_1$ is a weight system, so it satisfies $(W1)-(W8)$. Then since $W$ and $W_1$ are both weight systems it follows that $W_2$ satisfies $(W5),(W7),(W8)$. We have that $W$ is the connected sum of $W_1$ and $W_2$.

\end{proof}

\begin{corollary}
For $p \ge 7$, every weight system is the weight system associated to an equivariant connected sum of linear $\mathbb{Z}_p$-actions on $\mathbb{CP}^2$, where the summands are connected according to a tree.
\end{corollary}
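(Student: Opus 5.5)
The plan is to argue by strong induction on $t = n+2m-2$, the number of weight vectors. For $t = 1$ (that is, $n+2m = 3$), Proposition \ref{prop:t1} already shows the weight system is that of a single linear $\mathbb{Z}_p$-action on $\mathbb{CP}^2$, as in Example \ref{ex:cp21} or \ref{ex:cp22}. Such a system corresponds to a one-vertex tree.

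For the inductive step, let $W$ be a weight system with $t \ge 2$ weight vectors. By Theorem \ref{thm:wtdec}, after shifting weights, $W$ is a connected sum of two weight systems $W_1$ and $W_2$. Here $W_1$ has type $(3,0)$ or $(1,1)$, so it has exactly one weight vector, and $W_2$ has $t-1$ weight vectors. The case count checks out: $(n-1,m)$ and $(n+1,m-1)$ both give $t' = t-1$. The degenerate situation where $W_2$ has no weight vectors cannot occur when $t \ge 2$, since the type is forced by $n+2m \ge 3$.

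We then apply the induction hypothesis to $W_2$, which is permitted because Theorem \ref{thm:wtdec} verified (W1)--(W8) for $W_2$. This realises $W_2$, up to weight shifting, as the weight system of an equivariant connected sum $X_2$ of linear $\mathbb{CP}^2$-actions glued along a tree. By Proposition \ref{prop:t1}, $W_1$ is realised by a linear action $X_1$ on $\mathbb{CP}^2$. The combinatorial connected sum of weight systems should then match the geometric equivariant connected sum $X_1 \# X_2$, carried out at an isolated fixed point or along a fixed sphere. This needs several checks.

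\emph{Isolated fixed points.} The gluing condition $e(1)_{i_1} = -e(2)_{i_2}$, $\rho(1)_{i_1} = \rho(2)_{i_2}$ says the normal weights are $(u_1,u_2)$ and $(u_1,-u_2)$. This is exactly the condition for an orientation-reversing equivariant identification of the tangent representations, so an equivariant connected sum at those points exists.

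\emph{Fixed surfaces.} The gluing condition is $c_{j_1} = c_{j_2}$. The two spheres are joined by an equivariant tube, their self-intersections add (matching the rule for $\delta$), and the $\tau$-components of the weights add.

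\emph{Line bundle weights.} The weights of the line bundles from each summand extend by zero over the other summand. This holds because a line bundle pulled back from one side, with the lift normalised to weight zero at the gluing point, is trivial equivariantly near the other side. This matches the definition of $w_a$ in the connected sum.

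Finally, adding $X_1$ along one vertex of the tree for $X_2$ again gives a tree. The weight shifts are harmless, since shifts are exactly the ambiguity in choosing the lifts $\widehat e_a$.

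The main obstacle is the third step: confirming that the abstract connected-sum rules agree with the actual fixed-point data and equivariant line bundle weights of $X_1 \# X_2$. This includes the basepoint normalisations and the orientation conventions for $c_j$ and the sign of $\tau$. I would handle it with the Mayer--Vietoris sequence for equivariant cohomology of the connected sum, restricted to fixed point sets via the localisation description of $H$ from Section \ref{sec:loc}. From there one reads off that restriction to fixed components is the concatenation given in the definition of connected sum.
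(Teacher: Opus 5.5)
Your proposal is correct and is essentially the paper's proof. It uses induction on the number $t$ of weight vectors, with Proposition \ref{prop:t1} as the base case and Theorem \ref{thm:wtdec} splitting off a single linear $\mathbb{CP}^2$ summand $W_1$ from a weight system $W_2$ with $t-1$ weight vectors. Your extra check that the abstract connected-sum rules (basepoint normalisation, the gluing conditions, additivity of $\delta$ and of the line bundle weights) match the geometric equivariant connected sum is something the paper takes for granted, so it is added care rather than a different route.
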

\begin{proof}
Let $W = (a,b,c,\delta,w)$ be a weight system of type $(n,m)$ and set $t = n+2m-2$. We prove the result by induction on $t$. If $t=1$, then the result follows from Proposition \ref{prop:t1}. Now assume $t > 1$. Then according to Theorem \ref{thm:wtdec}, after weight shifting, $W$ is the connected sum of weight systems $W_1,W_2$ of types $(n_1,m_1), (n_2,m_2)$, where $t_1 = n_1 + 2m_1 - 2 = 1$, $t_2 = n_2 + 2m_2 - 2 = t-1$. So $W_1$ is the weight system of a linear action on $\mathbb{CP}^2$ by Proposition \ref{prop:t1} (or alternatively this follow from the proof of Theorem \ref{thm:wtdec}) and by the inductive hypothesis $W_2$ is the weight system associated to an equivariant connected sum of linear $\mathbb{Z}_p$-actions on $\mathbb{CP}^2$, where the summands are connected according to a tree. It follows that the same is true of the weight system $W$.
\end{proof}

Given a $\mathbb{Z}_p$-action on a definite $4$-manifold $X$ with $H_1(X ; \mathbb{Z})$, we obtain a weight system $W = (a,b,c,\delta,w)$. The mod $p$ values of the self-intersections $[\Sigma_j]^2$ are given by the $\delta_j$ and the mod $p$ values of $\langle e_a , [\Sigma]_j \rangle$ are given by $(u_a)_j$, where $w_a = ( \{ (w_a)_i \} ; \{ (v_a)_j + (u_a)_j \tau \} )$. However, with a little more work, we can actually recover the integer values of $[\Sigma_j]^2$ and $\langle e_a , [\Sigma_j] \rangle$.

\begin{proposition}\label{prop:uaj}
Let $W = (a,b,c,\delta,w)$ be a weight system over $\mathbb{Z}_p$ with $p \ge 7$ of type $(n,m)$ with index sets $I,J,K$. Write the weights as $w_a = ( \{ (w_a)_i \} ; \{ (v_a)_j + (u_a)_j \tau \} )$. Then for each $a \in K$, there is at most one $j \in J$ for which $(u_a)_j \neq 0$. Furthermore, in this case $(u_a)_j = \pm 1$.
\end{proposition}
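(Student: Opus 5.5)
Because the statement is about an abstract weight system, the natural route is induction on $t$ via the decomposition Corollary, rather than arguing directly from the identities (W1)--(W8). Two preliminary observations. First, $(u_a)_j$ is unchanged by weight shifting, since adding a multiple of $1$ only changes the constant term $(v_a)_j$ and not the $\tau$-coefficient. Second, it is unchanged by reindexing. So we may pass freely to the normal form produced by Theorem \ref{thm:wtdec}.

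For the base case $t=1$ we use Proposition \ref{prop:t1}. For type $(3,0)$ there are no surfaces, so there is nothing to prove. For type $(1,1)$ the proof shows $w = (w_0 ; w_1 + w_2\tau)$ with $w_2 = \pm 1$, which is exactly the claim.

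For the inductive step, suppose Theorem \ref{thm:wtdec} writes $W$ as a connected sum of $W_1$ (with the single weight $w_0$) and $W_2$ (with weights $w_a$, $a \neq 0$). We then read off the surface components of each weight from the explicit formulas in the three cases.
\begin{itemize}
\item[(Case 1.)] The vector $w_0$ has no surface components at all. For $a \neq 0$, the $J$-components of $w_a$ equal those of $w(2)_a$, so the inductive hypothesis for $W_2$ applies directly.
\item[(Case 2.)] We have $w_0 = (0,\dots ; u+v\tau, 0,\dots)$ with $v = \pm 1$ (shown in that proof), so $w_0$ has exactly one nonzero $\tau$-coefficient, equal to $\pm1$. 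For $a \neq 0$, the component at $j=1$ is $0$, and the remaining surface components are those of $w(2)_a$ on $J(2) = J\setminus\{1\}$. Induction then gives the claim.
\item[(Case 3.)] We have $w_0 = (u,0,\dots ; v\tau,0,\dots)$ with $v = \pm1$, which is fine. For $a\neq 0$ the $J$-components of $w_a$ are exactly those of $w(2)_a$, including the $t_a\tau$ at $j=1$, which is kept in $W_2$ with the same surface index. Induction again applies.
\end{itemize}
In each case the surface index sets of $W$ and $W_2$ correspond bijectively, possibly with one surface removed, and the corresponding components agree. Hence the property "at most one nonzero $(u_a)_j$, and it equals $\pm1$" transfers from $W_2$ and $W_1$ to $W$.

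The main point needing care is the bookkeeping: the weight shifts and rebasing used in Proposition \ref{prop:w0} and Theorem \ref{thm:wtdec} must not alter the $\tau$-coefficients. They do not, since shifts add constants only, so that each $(u_a)_j$ is literally one of the corresponding coefficients in $W_1$ or $W_2$. One must also check that the inductive hypothesis is applied to $W_2$ as a genuine weight system, which Theorem \ref{thm:wtdec} guarantees.
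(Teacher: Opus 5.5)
Your proof is correct and follows the paper's argument. Both proofs use the invariance of the $\tau$-coefficients under weight shifting, then induct on $t$, with Proposition \ref{prop:t1} as the base case and the connected-sum decomposition of Theorem \ref{thm:wtdec} as the inductive step. Your case-by-case check of where the surface components of $w_0$ and of $w_a$, $a \neq 0$, sit in $W_1$ and $W_2$ simply spells out what the paper leaves as ``easily seen''.
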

\begin{proof}
First note that the statement of the proposition is invariant under weight shifting. We prove the result by induction on $t = n+2m-2$. When $t=1$ the result is true by Proposition \ref{prop:t1}. Now suppose $t > 1$. Then by Theorem \ref{thm:wtdec}, after weight shifting, $W$ is a connected sum of weight systems $W_1,W_2$. If $W_1,W_2$ have types $(n_1,m_1), (n_2,m_2)$ and we set $t_i = n_i + 2m_i - 2$, then $t = t_1 + t_2$ and hence $t_1,t_2 < t$. If $W_1,W_2$ satisfy the statement of the proposition then it is easily seen that $W$ does as well. So the result follows by induction.
\end{proof}

\begin{theorem}\label{thm:wtdec2}
Let $p \ge 7$. Let $X$ be a closed, smooth, orientable, positive definite $4$-manifold with $H_1(X ; \mathbb{Z}) = 0$. Let $G = \mathbb{Z}_p = \langle g \rangle$ for $p$ a prime act smoothly on $X$. If $p=2$ assume that $H^2(X ; \mathbb{Z})$ has no cyclotomic summands. Then there is a $4$-manifold $X'$ which is an equivariant connected sum of linear $\mathbb{Z}_p$-actions on $\mathbb{CP}^2$ with the following properties:
\begin{itemize}
\item[(1)]{There is an isometry $\phi : H^2(X' ; \mathbb{Z}) \to H^2(X ; \mathbb{Z})$ which respects the $\mathbb{Z}_p$-actions.}
\item[(2)]{The fixed point sets $\{x_1 , \dots , x_n , \Sigma_1 , \dots , \Sigma_m \}$, $\{x'_1 , \dots , x'_n , \Sigma'_1 , \dots , \Sigma'_m\}$ of $X$ and $X'$ have the same number of points and surfaces and the same normal weights: $(a_i , b_i) = (a'_i , b'_i)$ for all $i$, $c_j = c'_j$ for all $j$. Moreover the fixed surfaces represent the same cohomology classes under $\phi$, that is, $[\Sigma_j] = \phi( [\Sigma'_j])$.}
\item[(3)]{The weights of equivariant line bundles on $X$ and $X'$ agree. More precisely, if we set $e'_i = \phi^{-1}(e_i)$ for $i = 1, \dots , t$, then (for suitably chosen lifts $\widehat{e}_a, \widehat{e}'_a$) we have $(w'_a)_i = (w_a)_i$, $(v'_a)_j = (v_a)_j$ for all $a,i,j$.}
\end{itemize}

\end{theorem}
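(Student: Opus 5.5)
The plan is to pass from the action on $X$ to its weight system, use the classification of weight systems to get a matching connected sum $X'$, and then promote the mod $p$ agreement to agreement of integral cohomology classes. First, by Proposition \ref{prop:decomp} we get $H^2(X;\mathbb{Z}) \cong T^t \oplus R^r$. By the discussion of Section \ref{sec:definite}, the data $(a_i,b_i)$, $c_j$, $\delta_j = [\Sigma_j]^2 \bmod p$ and $w_a = x^{-1}\widehat{e}_a$ form a weight system $W$ of type $(n,m)$ with $n+2m = t+2$. The equivariant Seiberg--Witten input (W6)--(W8) has already been verified there, and the localisation identities give (W2)--(W5). By the Corollary to Theorem \ref{thm:wtdec}, after weight shifting (that is, after changing the lifts $\widehat{e}_a$), $W$ equals the weight system of an equivariant connected sum $X_0$ of linear actions on $\mathbb{CP}^2$, glued along a tree.

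The trivial summands of $X_0$ contribute exactly $t$ fixed classes. To obtain $X'$ we must also match the $R^r$ part. For this I would take the equivariant connected sum of $X_0$ with $p$ copies of $r$ copies of $\mathbb{CP}^2$, permuted cyclically and attached along a free orbit. This adds regular summands $R^r$ without changing the fixed point data or the weights, since the classes $f_j$ have weight zero on both sides by the construction of $F_j$. Parts (2) on normal weights and (3) on weights of $\widehat{e}_a$ versus $\widehat{e}'_a$ then hold by construction. The cohomology classes $e'_a$ are the standard generators of the $\mathbb{CP}^2$ summands, so the isometry $\phi$ is defined by sending $e'_a \mapsto e_a$ and $f'_{j,k} \mapsto f_{j,k}$. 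This is $G$-equivariant, which gives part (1).

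The remaining and main obstacle is the integral statement $[\Sigma_j] = \phi([\Sigma'_j])$. The weight system only records $\langle e_a, [\Sigma_j]\rangle$ and $[\Sigma_j]^2$ mod $p$. Since $\Sigma_j$ is fixed, $[\Sigma_j]$ lies in $H^2(X)^G$. Moreover, each $f_j$ pairs with $[\Sigma_j]$ to give a multiple of $p$; by a transfer argument I expect this pairing to actually vanish, since $f_{j,k}$ are permuted and $\Sigma_j$ is fixed, forcing equal pairings $\langle f_{j,k},\Sigma\rangle$. I would first try to show that the $f$-component of $[\Sigma_j]$ is zero. Failing that, I would argue directly that the integers $\langle e_a,[\Sigma_j]\rangle$ are determined.

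By Proposition \ref{prop:uaj}, for each $a$ the residues $(u_a)_j$ are $\pm1$ for at most one $j$ and $0$ otherwise. To lift this to integers I would apply the equivariant $G$-signature / localisation relations at the level of integers. Concretely, the adjunction-type bound for embedded spheres in definite manifolds (via Seiberg--Witten, or the identity $[\Sigma_j]^2 = \sum_a \langle e_a,\Sigma_j\rangle^2 + (\text{$f$-part})$) should force the integer $\langle e_a,[\Sigma_j]\rangle$ to be the small representative of its residue, i.e. $0$ or $\pm 1$. This rests on showing $|[\Sigma_j]^2| < p$, which I expect to be the delicate point. Together with the orientation choice of $\Sigma_j$, this identifies $[\Sigma_j]$ with $\sum_a (u_a)_j e_a = \phi([\Sigma'_j])$.
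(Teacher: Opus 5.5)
\textbf{There is a genuine gap in the integral step $[\Sigma_j]=\phi([\Sigma'_j])$.} The construction of $X'$, the choice $\phi(e'_a)=e_a$, $\phi(f'_{j,k})=f_{j,k}$, and parts (1), (3) and the normal-weight half of (2) are fine and agree with the paper. For the integral step, both routes you propose fail.

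The transfer argument cannot give vanishing of the $f$-component. $G$-invariance of $[\Sigma_j]$ only shows that $\langle f_{b,k},[\Sigma_j]\rangle$ is independent of $k$, i.e.\ $\langle f_b,[\Sigma_j]\rangle\in p\mathbb{Z}$, which you already knew.

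The bound $[\Sigma_j]^2<p$ is false in general. Glue $k$ copies of $\mathbb{CP}^2$, each with the action $[x,y,z]\mapsto[x,\omega^c y,\omega^c z]$, along their fixed lines. This gives a $\mathbb{Z}_p$-action on $k\mathbb{CP}^2$ that satisfies all the hypotheses and is itself of the form $X'$. Its unique fixed sphere represents $e_1+\cdots+e_k$ (for a suitable orthonormal basis) and has square $k$, which may be $\ge p$. So no local or adjunction-type estimate can recover the integers $\langle e_a,[\Sigma_j]\rangle$ from their residues mod $p$.

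\textbf{The missing idea is a global integral comparison of $X$ with $X'$ via the $G$-signature theorem.} Averaging $\mathrm{sign}(g^k,X)$ over $k$ gives
\[
\sigma^{inv}(X)=\frac{\sigma(X)}{p}+\big(\text{terms depending only on the }(a_i,b_i)\big)+\frac{p^2-1}{3p}\sum_j[\Sigma_j]^2 ,
\]
and likewise for $X'$. For both manifolds $\sigma=t+pr$ and $\sigma^{inv}=t+r$, and the $(a_i,b_i)$ agree. Hence you get the \emph{integer} identity
\[
\sum_j[\Sigma_j]^2=\sum_j[\Sigma'_j]^2=\sum_j|K_j|,
\]
where $K_j=\{a:(u_a)_j\neq 0\}$; these sets are disjoint by Proposition~\ref{prop:uaj}.

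You already have the complementary lower bound. Write $[\Sigma_j]=\sum_a\sigma_{a,j}e_a+\sum_b c_{b,j}f_b$ with $\sigma_{a,j}\equiv(\widehat u_a)_j\in\{0,\pm1\}\pmod p$. Then
\[
[\Sigma_j]^2=\sum_a\sigma_{a,j}^2+p\sum_b c_{b,j}^2\ge|K_j|,
\]
with equality iff $\sigma_{a,j}=(\widehat u_a)_j$ for all $a$ and all $c_{b,j}=0$. The summed identity forces equality for every $j$. This simultaneously kills the $f$-component and gives $[\Sigma_j]=\sum_a(\widehat u_a)_je_a=\phi([\Sigma'_j])$, with no bound on any individual $[\Sigma_j]^2$ needed.
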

\begin{proof}
Let the action of $\mathbb{Z}_p$ on $H^2(X ; \mathbb{Z})$ consist of $t$ trivial summands and $r$ regular summands. Let $W = (a,b,c,\delta,w)$ be the corresponding weight system. For each $j$, let $K_j = \{ a \in K \; | (u_a)_j \neq 0 \}$. By Proposition \ref{prop:uaj} the sets $K_j$ are disjoint. By the same proposition $(u_a)_j \in \{0,1,-1\}$ for all $a,j$. Let $(\widehat{u}_a)_j \in \mathbb{Z}$ be the unique lift of $(u_a)_j$ to $\mathbb{Z}$ which is valued in $\{0,1,-1\}$. Since $\langle e_a , [\Sigma_j] \rangle = (\widehat{u}_a)_j \; ({\rm mod} \; p)$, we see that
\[
[\Sigma_j] = \sum_{a=1}^t \sigma_{a,j} e_a + \sum_{b = 1}^{r} c_{b,j} f_j
\]
for some $\sigma_{a,j}, c_{b,j} \in \mathbb{Z}$ and moreover $\sigma_{a,j} = (\widehat{u}_a)_j \; ({\rm mod} \; p)$. It follows that
\begin{equation}\label{equ:sigj2}
[\Sigma_j]^2 \ge | K_j| \text{ with equality iff } \sigma_{a,j} = (\widehat{u}_a)_j, c_{b,j} = 0.
\end{equation}

By Theorem \ref{thm:wtdec}, there is a $\mathbb{Z}_p$-action on $t \mathbb{CP}^2$ which is an equivariant connected sum of linear actions on $\mathbb{CP}^2$ and whose weight system is equal to that of $X$. Choose a point $p \in t \mathbb{CP}^2$ and attach a copy of $\mathbb{CP}^2$ at $p, gp, g^2 p , \dots , g^{p-1}p$. Then we can extend the action of $\mathbb{Z}_p$ to $t \mathbb{CP}^2 \# r \mathbb{CP}^2$ such that the $r$ additional copies of $\mathbb{CP}^2$ are cyclically permuted. Repeating this process $r$ times, we get a $\mathbb{Z}_p$-action on $X' = (t+pr)\mathbb{CP}^2$ which has the same weight system as $X$ and such that the action of $\mathbb{Z}_p$ on $H^2(X' ; \mathbb{Z})$ has $t$ trivial summands and $r$ regular summands. Let $\Sigma_1, \dots , \Sigma_m$ be the fixed surfaces in $X$ and let $\Sigma'_1, \dots , \Sigma'_m$ be the fixed surfaces in $X'$. Since both actions have the same weight systems, the collection of sets $\{ K_j \}$ are the same (after possibly re-indexing). It follows that
\[
[\Sigma'_j] = \sum_{a=1}^{t} (\widehat{u}_a)_j e_a
\]
and therefore $[\Sigma'_j]^2 = |K_j|$. Now we apply the $G$-signature theorem to both $X$ and $X'$. Applied to $X$, we obtain
\[
\sigma^{inv}(X) = \frac{\sigma(X)}{p} - 4 \sum_{i=1}^{n} s( u_j , p ) + \frac{p^2-1}{3p} \sum_{j=1}^{m} [\Sigma_j]^2,
\]
where $\sigma^{inv}(X)$ denote the signature of $H^2(X ; \mathbb{Z})^{\mathbb{Z}_p}$, $s(q , p)$ is the Dedekind sum and $u_j = a_j b'_j$, where $b'_j$ is a mutiplicative inverse of $b_j$ in $\mathbb{Z}_p$. Since $X$ is positive definite, we have $\sigma(X) = b_2(X) = t + pr$ and $\sigma^{inv}(X) = t+r$. So the $G$-signature theorem simplifies to
\[
\frac{(p-1)}{p} t = - 4 \sum_{i=1}^{n} s( u_j , p ) + \frac{p^2-1}{3p} \sum_{j=1}^{m} [\Sigma_j]^2.
\]

Similarly, the $G$-signature theorem applied to $X'$ gives
\[
\frac{(p-1)}{p} t = - 4 \sum_{i=1}^{n} s( u_j , p ) + \frac{p^2-1}{3p} \sum_{j=1}^{m} [\Sigma'_j]^2.
\]
It follows that
\[
\sum_{j=1}^m [\Sigma_j]^2 = \sum_{j=1}^m [\Sigma'_j]^2 = \sum_{j=1}^{m} |K_j|.
\]
Combined with (\ref{equ:sigj2}), this can only happen if $[\Sigma_j] = \sum_a (\widehat{u}_a)_j e_a$.

\end{proof}

\section{Small primes}\label{sec:smallp}

In this section we consider the necessary modifications to the proof of Theorems \ref{thm:wtdec} and \ref{thm:wtdec2} that will allow us to extend these results to the cases $p = 2$, $3$ and $5$.

\subsection{Case $p=5$} Consider a $\mathbb{Z}_5$-action on a positive definite $4$-manifold $X$ with $H_1(X ; \mathbb{Z}) = 0$. The results of Section \ref{sec:definite} carry over to this except for Equations (\ref{equ:dirx43}) and (\ref{equ:dirx44}). Thus the fixed point data $(a,b,c,\delta,w)$ satisfies all the conditions of weight system except possibly $(W7)$ and $(W8)$. We will show that $(W7)$ and $(W8)$ still hold for $p=5$.

\begin{lemma}
Let $W = (a,b,c,\delta,w)$ satisfy $(W1)$-$(W6)$ for $p=5$. Then $W$ also satisfies $(W7),(W8)$.
\end{lemma}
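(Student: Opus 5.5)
Since $(W7)$ and $(W8)$ were only used downstream, the natural plan is to rerun the decomposition argument of Section~\ref{sec:decomp} using only $(W1)$--$(W6)$ when $p=5$. Once $W$ is known to be a connected sum of linear $\mathbb{CP}^2$ weight systems, $(W7)$ and $(W8)$ follow automatically. The reason is that for an honest equivariant connected sum of linear actions these identities are consequences of the index theorem computations of Section~\ref{sec:index}, and they are polynomial identities in the data that hold for every $p$ for which the coefficients make sense. When $p=5$ the coefficients $15,30,7,4,60,120,28,16$ are all defined; the identity $(W7)$ reads $-30\langle w^2\rho\rangle+7\langle\rho^2\rangle-4\langle e^2\rangle = 0$ mod $5$, and $(W8)$ reduces to $28\langle\rho^2\rangle-16\langle e^2\rangle=0$ mod $5$. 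So one route is to verify these reduced identities directly for the basic systems of Examples~\ref{ex:cp21} and \ref{ex:cp22}, and then check that they are additive under the two connected sum operations.

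I would proceed by induction on $t$. Propositions~\ref{prop:products} and \ref{prop:dabdba}, Lemmas~\ref{lem:dabc}, \ref{lem:cubic}, \ref{lem:bd1}, \ref{lem:bd2}, \ref{lem:roots} and \ref{lem:w0cases}, and Proposition~\ref{prop:w0} use only $(W2)$--$(W4)$ and $(W6)$, together with $p\ge 5$ (needed to divide by $6$ and $4$). They therefore apply verbatim. This gives $w_0$ with $w_0w_a=0$ for $a\neq 0$, in one of the three forms of Lemma~\ref{lem:w0cases}.

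Cases 2 and 3 of Theorem~\ref{thm:wtdec} also use only $(W2)$--$(W4)$ and $(W6)$ to pin $W_1$ down as a linear $(1,1)$ system. Case 1 is different: there $(W8)$ was used to determine $\rho_1,\rho_2$ (only $\rho_1-\rho_2$ comes from $(W6)$). This is the main obstacle. For $p=5$ I would instead use the $(W1)$ data directly. Since $e(1)$ is already determined, we know $a_1b_1=u_1(u_1-u_2)$ and $a_2b_2=u_2(u_2-u_1)$ up to sign. Combining this with $\rho_1-\rho_2=u_1^2-u_2^2$, I would enumerate the possible $\rho_i=a_i^2+b_i^2$ in $\mathbb{Z}_5$ given the product $a_ib_i$. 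Since $\mathbb{Z}_5^*$ has only four elements, a finite case check should force $\rho_2=u_2^2+(u_1-u_2)^2$; any spurious solutions would be excluded by $(W5)$ for the full system together with the $\epsilon$-sign consistency. If that fails, the fallback is to verify $(W7)$ and $(W8)$ by brute force over all $(3,0)$ data mod $5$.

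With $W_1$ identified as linear, the argument concludes as follows. By induction $W_2$ is a connected sum of linear systems, hence so is $W$. Since $\langle\cdot\rangle$, $\rho$, $e$ and the weights split additively across the sum (as checked in Case 1), $(W7)$ and $(W8)$ for $W$ reduce to those for the summands, and these hold for linear $\mathbb{CP}^2$ actions by the Seiberg--Witten/index computation for those genuine actions.
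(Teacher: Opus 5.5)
Your plan depends on rerunning Section~\ref{sec:decomp} using only (W1)--(W6), and it breaks at Proposition~\ref{prop:dabdba}. The identity $D_{ab}D_{ba}=0$ does not follow from (W6). Its proof evaluates $\langle \rho w_a w_b\rangle$ through the $\epsilon_a\epsilon_b$-coefficient of (W7), which is exactly what you are trying to prove. Worse, that coefficient is
\[
60\Bigl(\langle w_a^3w_b+w_aw_b^3\rangle+3\sum_{c\neq a,b}\langle w_aw_bw_c^2\rangle-\langle\rho w_aw_b\rangle\Bigr),
\]
and $60\equiv 0 \pmod 5$, so at $p=5$ it carries no information even if (W7) were granted. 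Lemma~\ref{lem:cubic}, the asymmetry and transitivity of $>$, Lemma~\ref{lem:min}, the block decompositions, Proposition~\ref{prop:w0} and Lemmas~\ref{lem:roots}, \ref{lem:w0cases} all rest on $D_{ab}D_{ba}=0$, so your induction never gets started. Your last step is also unjustified as written. Equations (\ref{equ:dirx43}) and (\ref{equ:dirx44}) come from the $x^4$-terms of the Lefschetz expansion, whose denominators $720$, $1440$, $2880$ are divisible by $5$, so they are only established for $p\ge 7$. The index theorem for genuine linear $\mathbb{Z}_5$-actions on $\mathbb{CP}^2$ therefore does not give you (W7) or (W8); you would have to check them by hand.

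The missing idea is that no structure theory is needed. You already wrote down the reductions mod $5$, but $-30\equiv 0$ as well. So (W7) and (W8) are the same $\epsilon$-independent identity $2\langle\rho^2\rangle+\langle e^2\rangle=0$; for (W8), note $28\langle\rho^2\rangle-16\langle e^2\rangle\equiv-(2\langle\rho^2\rangle+\langle e^2\rangle)$. By (W1) and Fermat, $a_i^4=b_i^4=c_j^4=1$ in $\mathbb{Z}_5$. Hence $\rho^2=(\{2+2a_i^2b_i^2\};\{1-c_j^3\delta_j\tau\})$ and $e^2=(\{a_i^2b_i^2\};\{0\})$. Substituting into (\ref{equ:pairing}) and using $c_j^2=c_j^{-2}$ gives $2\langle\rho^2\rangle+\langle e^2\rangle=-\sum_i (a_ib_i)^{-1}+\sum_j\delta_jc_j^{-2}=-\langle 1\rangle$, which vanishes by (W2). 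This three-line computation is the paper's proof. It uses only (W1), (W2) and the definitions of $\rho$ and $e$, and never needs (W3)--(W6).
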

\begin{proof}
For $p=5$, $(W7)$ and $(W8)$ both reduce to
\[
2\langle \rho^2 \rangle + \langle e^2 \rangle = 0.
\]

We have $\rho = (\{a_i^2 + b_i^2\} ; \{ c_j^2 + 2c_j \delta_j \tau \} )$. Then since $a_j,b_j,c_j \neq 0$, we have $a_j^4 = b_j^4 = c_j^4 = 1$, so
\[
\rho^2 = ( \{ 2 + 2a_i^2b_i^2 \} ; \{ 1 - c_j^3 \delta_j \tau \} ).
\]
Also $e^2 = ( \{ a_i^2 b_i^2 \} ; \{ 0 \} )$. So
\[
2 \langle \rho^2 \rangle + \langle e^2 \rangle = -\sum_i \frac{1}{a_ib_i} + \sum_j \delta_j c_j^2 = -\langle 1 \rangle = 0.
\]
Hence $(W7)$ and $(W8)$ hold.
\end{proof}

So for $p=5$, we still obtain a weight system. The proofs of Theorem \ref{thm:wtdec}, \ref{thm:wtdec2} carry over to the case $p=5$ except that in Case 1 of the proof of Theorem \ref{thm:wtdec} we needed to cancel a factor of $240$ in order to deduce that $\rho(1)$ was given by
\begin{equation}\label{equ:rho1}
\rho(1) = (\rho_0 , \rho_1 , \rho_2 ) = ( u_1^2 + u_2^2 , u_1^2 + (u_1-u_2)^2 , u_2^2 + (u_1 - u_2)^2 ).
\end{equation}

We will give a proof that Equation (\ref{equ:rho1}) still holds when $p=5$. We already know that $\rho(1) = u_1^2 + u_2^2$ and from $(W6)$ we deduce that $\rho_1 - \rho_2 = u_1^2 - u_2^2$. From the definition of $\rho(1)$, we have
\begin{align*}
a_1^2 + b_1^2 &= \rho_1, \\
a_2^2 + b_2^2 &= \rho_2.
\end{align*}

Since $e(1) = (u_1u_2 , u_1(u_1-u_2) , u_2(u_2-u_1))$, we also have
\begin{align*}
a_1 b_1 &= u_1(u_2-u_2), \\
a_2 b_2 &= u_2(u_1-u_2).
\end{align*}

Let us set $x_1 = a_1/u_1, x_2  = a_2/u_2, y_1 = b_1/u_1, y_2 = b_2/u_2$, $\gamma_1 = \rho_1/u_1$, $\gamma_2 = \rho_2/u_1$ and $\lambda = u_2/u_1$. Then we have
\begin{align*}
x_1^2 + y_1^2 &= \gamma_1, \\
x_2^2 + y_2^2 &= \gamma_2, \\
x_1y_1&= 1-\lambda, \\
x_2y_2 &= \lambda(\lambda-1), \\
\gamma_1 - \gamma_2 &= 1-\lambda^2.
\end{align*}

Since $u_1,u_2 \neq 0$ and $u_1 \neq u_2$, we have $\lambda \neq 0,1$ and so $\lambda \in \{-1,2,-2\}$. We claim that $\gamma_1 = 1 + (1-\lambda)^2$, $\gamma_2 = \lambda^2 + (1-\lambda)^2$ and therefore $\rho_1 = u_1^2 + (u_1 - u_2)^2$, $\rho_2 = u_2^2 + (u_1 - u_2)^2$. To prove the claim we consider separately the three possible values for $\lambda$.

If $\lambda = -1$, then $\gamma_1 = \gamma_2$ and $x_1 y_1 = 2$. So $(x_1,y_1) \in \{ (1,2) , (2,1) , (-1,-2) , (-2,-1) \}$. In all cases we get that $x_1^2 + y_1^2 = 0$ and thus $\gamma_1 = 0 = 1 + (1-\lambda)^2$, $\gamma_2 = 0 = \lambda^2 + (1-\lambda)^2$.

If $\lambda = 2$, then $\gamma_1 - \gamma_2 = 2$ and $x_2 y_2 = 2$. Similar to the previous case, all possible solutions to $x_2 y_2 = 2$ satisfy $x_2^2 + y_2^2 = 0$. Thus $\gamma_2 = 0 = \lambda^2 + (1-\lambda)^2$ and $\gamma_1 = 2 = 1+(1-\lambda)^2$.

If $\lambda = -2$, then $\gamma_1 - \gamma_2 = 2$ and $x_1 y_1 = 3$. So $(x_1,y_1) \in \{ (1,-2) , (-2,1) , (2,-1) , (-1,2) \}$. In all possible cases, we get $x_1^2 + y_1^2 = 0$. So $\gamma_1 = 0 = 1 + (1-\lambda)^2$ and $\gamma_2 = -2 = \lambda^2 + (1-\lambda)^2$.

The rest of the proof of Theorem \ref{thm:wtdec} and also the proof of Theorem \ref{thm:wtdec2} work exactly the same for $p=5$ and so we deduce that Theorem \ref{thm:wtdec2} also holds for $p=5$.

\subsection{Case $p=3$}

Consider a $\mathbb{Z}_3$-action on a positive definite $4$-manifold $X$ with $H_1(X ; \mathbb{Z}) = 0$. The results of Section \ref{sec:definite} carry over to this except for Equations (\ref{equ:dirx33}), (\ref{equ:dirx43}) and (\ref{equ:dirx44}). Thus the fixed point data $(a,b,c,\delta,w)$ satisfies all the conditions of weight system except possibly $(W6), (W7)$ and $(W8)$. We show these continue to hold when $p=3$.

\begin{lemma}
Let $W = (a,b,c,\delta,w)$ satisfy $(W1)$-$(W5)$ for $p=3$. Then $W$ also satisfies $(W6), (W7)$ and $(W8)$.
\end{lemma}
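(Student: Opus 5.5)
The plan is a direct computation in characteristic $3$, using two simplifications: every $x\in\mathbb{Z}_3$ satisfies $x^3=x$, and every nonzero element squares to $1$. By (W1) this gives $a_i^2=b_i^2=c_j^2=1$, hence $a_i^2b_i^2=1$. I would first write out $\rho$ and $e$ explicitly. From (\ref{equ:p1}) and (\ref{equ:euler}),
\[
\rho=(\{-1\};\{1-c_j\delta_j\tau\}),\qquad e^2=(\{1\};\{0\}),
\]
using $a_i^2+b_i^2=2=-1$ and $2=-1$. I would then reduce the numerical coefficients in (W7), (W8) mod $3$: $15,30,60,120\equiv 0$, while $7,28\equiv 1$ and $4,16\equiv 1$. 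Thus (W7) and the left side of (W8) both become $\langle\rho^2\rangle-\langle e^2\rangle$, and the right side of (W8) becomes $5(\langle w^3\rangle-\langle\rho w\rangle)^2$.

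\textbf{Step 1: (W6).} For a general $w=(\{w_i\};\{v_j+u_j\tau\})$, the characteristic-$3$ identities give $w^3=(\{w_i\};\{v_j\})$: the $\tau$-coefficient $3v_j^2u_j$ vanishes. Also
\[
\rho w=(\{-w_i\};\{v_j+(u_j-c_j\delta_jv_j)\tau\}).
\]
Plugging into (\ref{equ:pairing}) with $c_j^{-2}=1$, I expect
\[
\langle w^3\rangle-\langle\rho w\rangle=2\sum_i\frac{w_i}{a_ib_i}-2\sum_j\delta_jv_j-\sum_j\frac{u_j}{c_j}=-\langle w\rangle .
\]
So the (W6) expression is $-\langle w\rangle$, which is linear in $w$. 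Applied to $w=w(\epsilon)$, it vanishes by (W3). The sign bookkeeping in this step is the only real point of care.

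\textbf{Step 2: (W7).} Compute $\rho^2=(\{1\};\{1-2c_j\delta_j\tau\})=(\{1\};\{1+c_j\delta_j\tau\})$. Then $\rho^2-e^2=(\{0\};\{1+c_j\delta_j\tau\})$, so
\[
\langle\rho^2-e^2\rangle=-\sum_j\frac{\delta_j}{c_j^2}+\sum_j\frac{c_j\delta_j}{c_j}=0 .
\]
This proves (W7) for every $\epsilon$, since the reduced expression does not involve $w$ at all.

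\textbf{Step 3: (W8).} With $w=2w_a+w(\epsilon)$, Step 1 gives $\langle w^3\rangle-\langle\rho w\rangle=-\langle w\rangle=-2\langle w_a\rangle-\langle w(\epsilon)\rangle$. This is $0$ by (W3), so the right side of (W8) is $0$. The left side reduces to $\langle\rho^2\rangle-\langle e^2\rangle=0$ by Step 2, so (W8) holds. Axioms (W2) and (W5) are not needed here. The only anticipated obstacle is keeping the mod-$3$ signs straight in the $\tau$-components; everything else is mechanical.
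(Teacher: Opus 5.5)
Your proof is correct and is essentially the paper's argument. It uses the same mod-$3$ reductions ($a_i^2=b_i^2=c_j^2=1$, $z^3=z$), the same formulas for $\rho$, $\rho^2$, $e^2$, and the same identification of the (W6) expression with $-\langle w\rangle$ and of (W7) with $\langle\rho^2\rangle-\langle e^2\rangle=0$. The only difference is in (W8): you note that $\langle w^3\rangle-\langle\rho w\rangle=-\langle w\rangle$ holds for every $w\in H_{n,m}$ and apply it directly to $2w_a+w(\epsilon)$, which is slightly cleaner than the paper's route via $(2w_a+w(\epsilon))^3=-w_a^3+w(\epsilon)^3$ and extracting the $\epsilon_a$-terms of (W6).
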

\begin{proof}
For $p=3$, $(W6), (W7), (W8)$ reduce to:
\begin{align*}
\langle w(\epsilon)^3 \rangle - \langle \rho w(\epsilon) \rangle &= 0, \\
\langle \rho^2 \rangle - \langle e^2 \rangle &= 0, \\
\langle \rho^2 \rangle - \langle e^2 \rangle &= -( \langle (2w_a + w(\epsilon))^3 \rangle - \langle \rho (2w_a + w(\epsilon)) \rangle )^2,
\end{align*}

Write $w(\epsilon) = ( \{ w(\epsilon)_a \} ; \{ v(\epsilon)_j + u(\epsilon)_j \tau \})$. Since $c_j \in \mathbb{Z}_3^*$ we have $c_j^2 = 1$ for all $j$. From $(W3)$ we get
\begin{equation}\label{equ:we0}
0 = \langle w(\epsilon) \rangle = \sum_i \frac{w(\epsilon)_i }{a_ib_i} - \sum_j \delta_j v(\epsilon)_j + \sum_j u(\epsilon)_j c_j.
\end{equation}
Since $z^3 = z$ for any $z \in \mathbb{Z}_3$, we see that
\[
w(\epsilon)^3 = ( \{ w(\epsilon)_i \} ; \{ v(\epsilon)_j \})
\]
and thus
\[
\langle w(\epsilon)^3 \rangle = \sum_i \frac{w(\epsilon)_i}{a_ib_i} - \sum_j \delta_j v(\epsilon)_j.
\]
Since $a_i,b_i \neq 0$, we have $a_i^2 = b_i^2 = 1$. So $\rho$ has the form
\[
\rho = ( \{ -1 \} ; \{ 1 - \delta_j c_j \tau \} ).
\]
Hence
\begin{align*}
\langle \rho w(\epsilon) \rangle &= -\sum_i \frac{w(\epsilon)_i}{a_ib_i} - \sum_j \delta_j v(\epsilon)_j + \sum_j ( u(\epsilon)_j - \delta_j c_j v(\epsilon)_j )c_j \\
&= -\sum_i \frac{w(\epsilon)_i}{a_ib_i} + \sum_j \delta_j v(\epsilon)_j + \sum_j u(\epsilon)_j c_j.
\end{align*}

Hence
\[
\langle w(\epsilon)^3 \rangle - \langle \rho w(\epsilon) \rangle = -\sum_i \frac{w(\epsilon)_i}{a_ib_i} + \sum_j \delta_j v(\epsilon)_j - \sum_j u(\epsilon)_j c_j = 0
\]
where the last equality is (\ref{equ:we0}). Hence $(W6)$ holds.

Next, we have
\[
\rho^2 = (\{ 1\} ; \{ 1 + \delta_j c_j \tau \}), \quad e^2 = ( \{1\} ; 0 )
\]
and so
\[
\langle \rho^2 \rangle = \sum_i \frac{1}{a_ib_i} -\sum_j \delta_j + \sum_j \delta_j c_j^2 = \sum_i \frac{1}{a_ib_i} = \langle e^2 \rangle,
\]
which proves $(W7)$.

Having proven $(W7)$, $(W8)$ reduces to:
\[
\langle (2w_a + w(\epsilon)^3 \rangle - \langle \rho ( 2w_a + w(\epsilon) ) \rangle = 0.
\]
But in $\mathbb{Z}_3$ we have $(2w_a + w(\epsilon))^3 = 8w_a^3 + w(\epsilon)^3 = -w_a^3 + w(\epsilon)^3$, so
\begin{align*}
& \langle (2w_a + w(\epsilon)^3 ) \rangle - \langle \rho( 2w_a + w(\epsilon) ) \rangle \\
& \quad = -\langle w_a^3 \rangle + \langle w(\epsilon)^3 \rangle + \langle \rho w_a \rangle - \langle \rho w(\epsilon) \rangle \\
& \quad = -\langle w_a^3 \rangle + \langle \rho w_a \rangle
\end{align*}
where the last equality follows from $(W6)$. Expanding out the $\epsilon_a$-terms in $(W6)$ gives
\[
\langle w_a^3 \rangle + 3 \langle w_a \sum_{b | b \neq a } w_b^2 \rangle - \langle \rho w_a \rangle = 0,
\]
which in $\mathbb{Z}_3$ reduces to
\[
\langle w_a^3 \rangle - \langle \rho w_a \rangle = 0.
\]
Hence $(W8)$ holds.

\end{proof}

So for $p=3$, we still obtain a weight system. The results of Section \ref{sec:decomp} carry over to the case $p=3$ except at two points:
\begin{itemize}
\item[(1)]{We used $(W6)$ to deduce that $6 \langle w_a w_b w_c \rangle = 0$ for distinct $a,b,c$. When $p \neq 2,3$, this implies that $\langle w_a w_b w_c \rangle = 0$. But for $p=3$ we need a separate argument to show that $\langle w_a w_b w_c \rangle = 0$.}
\item[(2)]{In the proof of Theorem \ref{thm:wtdec} Case 1, we cancelled a factor of $240$ in order to deduce that $\rho(1) = (u_1^2 + u_2^2 , u_1^2 + (u_1- u_2)^2 , u_2^2 + (u_1 - u_2)^2)$. But since $3$ divides $240$, we need a separate argument for this step.}
\end{itemize}

Point (2) is by far the easiest to deal with. We have that $\rho(1) = ( u_1^2 + u_2^2 , a_1^2 + b_1^2 , a_2^2 + b_2^2 )$. But since $u_1,u_2,a_1,a_2,b_1,b_2 \in \mathbb{Z}_3$ are all non-zero we have that $u_1^2 = u_2^2 = a_1^2 = a_2^2 = b_1^2 = b_2^2 = 1$ and thus $\rho(1) = (2,2,2)$. On the other hand, since $u_1,u_2 \neq 0$ and $u_1 \neq u_2$ we also have that $(u_1^2 + u_2^2 , u_1^2 + (u_1-u_2)^2 , u_2^2 + (u_1 - u_2)^2 ) = (2,2,2)$. So $\rho(1) = (u_1^2 + u_2^2 , u_1^2 + (u_1- u_2)^2 , u_2^2 + (u_1 - u_2)^2)$.

Now we deal with point (1).

\begin{lemma}
For any $\mathbb{Z}_3$-action on a positive definite $4$-manifold $X$ with $H_1(X ; \mathbb{Z}) = 0$ we have that $\langle w_a w_b w_c \rangle =0$ for distinct $a,b,c$.
\end{lemma}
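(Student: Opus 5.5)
The plan is to go back to the equivariant index identities of Sections \ref{sec:index}--\ref{sec:loc} and extract more information than their mod $p$ reductions provide. For $p=3$ the obstruction is exactly that the $\epsilon_a\epsilon_b\epsilon_c$-coefficient of $(W6)$ is $6\langle w_aw_bw_c\rangle$, and $6\equiv 0$. I would therefore not reduce mod $3$ immediately. Instead I would keep the $G$-spin$^c$ Lefschetz identity of Proposition \ref{prop:lef} inside $\mathbb{Z}[\omega]$, with $\omega=e^{2\pi i/3}$, and expand in the uniformiser $\pi = 1-\omega$. Since $3 = -\omega^2\pi^2$, a relation of the form $6T \equiv 0 \pmod{\pi^{k}}$ gives $T \equiv 0 \pmod{\pi^{k-2}}$. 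Hence keeping two extra orders of $\pi$ should recover $T=\langle w_aw_bw_c\rangle$ modulo $\pi$, which is its value in $\mathbb{Z}_3$.

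The key steps, in order, are as follows.

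First, for each $\epsilon\in\{\pm1\}^t$ take the equivariant spin$^c$-structure $\mathfrak{s}(\epsilon)$ of Section \ref{sec:definite}. Equivariant Seiberg--Witten theory (\cite[Theorem 9.1]{bar}, as in the proof of (\ref{equ:dirx33})) gives $d_u=0$ for all $u$, so $\chi_{Spin}(g^2)=0$ exactly in $\mathbb{Z}[\omega]$.

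Second, multiply the fixed point formula by $(\omega^{a}-\omega^{-a})(\omega^b-\omega^{-b})$-type factors, exactly as in Section \ref{sec:index}, to obtain an honest identity in $\mathbb{Z}[\omega]$. Then reduce it modulo $\pi^{k}$ for $k$ around $5$, using $\mathbb{Z}[\omega]/(\pi^k)$ rather than $\mathbb{Z}_3[x]/(x^{2})$. This ring has characteristic $9$ or $27$, so the integers $6$ and $24$ are no longer zero, only divisible by $\pi^2$.

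Third, sum the resulting identity over all $\epsilon$ with sign $\epsilon_a\epsilon_b\epsilon_c$. All terms not involving each of $w_a,w_b,w_c$ to odd order cancel. What survives should be $6\langle w_aw_bw_c\rangle$, now computed through the localisation pairing lifted to $\mathbb{Z}[\omega]/(\pi^k)$, plus correction terms. Each correction term carries an extra power of $\pi$ coming from the expansion of $\omega^{w_i}$ and of the $\psi$-functions.

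The main obstacle is the third step. One must check that these correction terms vanish modulo $\pi^{3}$ after the $\epsilon$-sum, so that one can divide by $6$ and conclude $\langle w_aw_bw_c\rangle\equiv 0 \pmod{\pi}$. This is delicate, because the weights $w_i,v_j$ are only defined in $\mathbb{Z}_3$ and their integer lifts matter in $\mathbb{Z}[\omega]/(\pi^k)$.

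If this becomes unmanageable, the fallback is cohomological. In the Borel spectral sequence, multiplication by $x$ is an isomorphism on $E_2^{p,q}$ for $p\ge1$, and $e_ae_b=0$ in $H^4(X;\mathbb{Z})$ for $a\neq b$. Hence one can write $\widehat e_a\widehat e_b = x\beta_{ab}$ with $\beta_{ab}\in H^2_G(X;\mathbb{Z})$, so that $\langle w_aw_bw_c\rangle = \langle \bar\beta_{ab}, e_c\rangle \bmod 3$. I would then try to show that $\bar\beta_{ab}$ lies in the span of the classes $f_j$, which is orthogonal to $e_c$. The tool would be the integral Steenrod cube relation $P^1y=y^3$ applied to $y=\widehat e_a+\widehat e_b$ together with the equivariant Wu formula.
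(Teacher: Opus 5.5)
Your main route is the paper's: use $d_u=0$ for $\mathfrak{s}(\epsilon)$ to get an exact identity in $\mathbb{Z}[\omega]$, reduce modulo a higher power of $3$, isolate the $\epsilon_a\epsilon_b\epsilon_c$-coefficient, and cancel a factor of $3$. There is a genuine gap, though. The proposal stops exactly where the proof has to happen, and the one quantitative claim you make about that step is off by one power of $\pi$.

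Clear denominators as in (\ref{equ:spin3-3}), so the identity reads $\sum_i 2q_i\omega^{w(\epsilon)_i}+\sum_j \widehat{u}(\epsilon)_j\widehat{c}_j(\omega-\omega^2)\omega^{v(\epsilon)_j}+\sum_j\widehat{\delta}_j\omega^{v(\epsilon)_j}=0$, with integer lifts of the weights chosen linear in $\epsilon$. We have $\omega^{\epsilon y}=\tfrac12(\omega^y+\omega^{-y})+\tfrac{\epsilon}{2}(\omega^y-\omega^{-y})$ and $\omega^y-\omega^{-y}\in\{0,\pm(\omega-\omega^2)\}$. Hence every contribution to the $\epsilon_a\epsilon_b\epsilon_c$-coefficient carries three factors of $\omega-\omega^2$; in the middle sum, one of them is the explicit factor. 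So that coefficient is automatically $0$ modulo $\pi^3$, and your criterion ``corrections vanish mod $\pi^3$, then divide by $6$'' yields no information.

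The content sits in the $\pi^3$-coefficient, so the identity must be used modulo $\pi^4=(9)$. Doing this, and using that $\tfrac12(\omega^y+\omega^{-y})\in\{1,-\tfrac12\}$ is $\equiv 1$ mod $3$, one finds the $\epsilon_a\epsilon_b\epsilon_c$-coefficient is $\equiv\tfrac38(\omega-\omega^2)\,T \pmod{\pi^4}$, where $T$ is an integer lift of $\langle w_aw_bw_c\rangle$. Since $\tfrac38(\omega-\omega^2)$ has valuation exactly $3$, this gives the lemma. So the main term is not $6\langle w_aw_bw_c\rangle$ plus small corrections. It is $6\langle w_aw_bw_c\rangle$ times an element of valuation one, with no further corrections modulo $\pi^4$.

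The paper carries out the same computation concretely. It reduces mod $9$ and uses $2\omega^c=(3c^2-2)(\omega+\omega^2)+c^3(\omega-\omega^2)$ in $\mathbb{Z}_9[\omega]$, which holds for every integer $c$ because $c^3+3c^2-2$ is $3$-periodic mod $9$. The $(\omega-\omega^2)$-component then has $\epsilon_a\epsilon_b\epsilon_c$-part $12\sum_i q_i(\cdots)-6\sum_j\widehat{c}_j(\cdots)+6\sum_j\widehat{\delta}_j(\cdots)\equiv 0$ mod $9$. Dividing by $3$ gives exactly $\langle w_aw_bw_c\rangle=0$ in $\mathbb{Z}_3$.

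This also removes your worry about lifts. $\omega^c$ depends only on $c$ mod $3$, so any integer lift linear in $\epsilon$ may be used. The only data that must be kept mod $9$ are the genuinely integral numbers $\langle c(\mathfrak{s}),[\Sigma_j]\rangle$ and $[\Sigma_j]^2$.

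Your cohomological fallback would not work as sketched. Mod $3$ one has $(\widehat{e}_a+\widehat{e}_b)^3\equiv\widehat{e}_a^3+\widehat{e}_b^3$, so $P^1y=y^3$ together with the Wu formula gives only a relation additive in $y$, essentially (W6) for $p=3$. Its $\epsilon_a\epsilon_b\epsilon_c$-part is the vacuous $6\langle w_aw_bw_c\rangle=0$, so it cannot see the cross term. It also discards the Seiberg--Witten input $\chi_{Spin}(g^2)=0$, which is where the information actually comes from.
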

\begin{proof}
As in Section \ref{sec:definite}, we consider an equivariant spin$^c$-structure $\mathfrak{s}$ which has $c(\mathfrak{s}) = \epsilon_1 \widehat{e}_1 + \cdots + \epsilon_t \widehat{e}_t + \widehat{f}$. Then the index $Spin \in R[G]$ of the corresponding Dirac operator is trivial, giving

\begin{align}
0 &= \sum_i \frac{ \omega^{w_i} }{(\omega^{a_i} - \omega^{-a_i})(\omega^{b_i} - \omega^{-b_i})} \label{equ:spin3} \\
&+ \sum_j \left( \frac{1}{2}\langle c(\mathfrak{s}) , [\Sigma_j] \rangle \frac{ \omega^{v_j} }{(\omega^{c_j} - \omega^{-c_j})} -  \frac{1}{2}[\Sigma_j]^2 \frac{(\omega^{c_j} + \omega^{-c_j}) \omega^{v_j}}{(\omega^{c_j} - \omega^{-c_j})^2} \right). \nonumber
\end{align}

Here $\omega = e^{2\pi i/3}$. Set $\widehat{\delta}_j = [\Sigma_j]^2$, then $\widehat{\delta}_j$ is a lift of $\delta_j$ to $\mathbb{Z}$ and set $(\widehat{u}_a)_j = \langle e_a , [\Sigma_j] \rangle$, then $(\widehat{u}_a)_j$ is a lift of $(u_a)_j$ to $\mathbb{Z}$. We also set $\widehat{u}(\epsilon)_j = \sum_a \epsilon_a (\widehat{u}_a)_j$.

Using $1/(\omega - \omega^{-1}) = (\omega^2-\omega)/3$, Equation (\ref{equ:spin3}) simplifies to
\begin{equation}\label{equ:spin3-2}
\sum_i 2q_i \omega^{w(\epsilon)_i} + \sum_j \widehat{u}(\epsilon)_j \omega^{v(\epsilon)_j} (\omega^{c_j} - \omega^{-c_j}) + \sum_j \widehat{\delta}_j \omega^{v(\epsilon)_j} = 0
\end{equation}
where $q_i = 1$ if $a_i = b_i$, $q_i = -1$ if $a_i \neq b_i$, $w(\epsilon)_i = \sum_a \epsilon_a (w_a)_i$, $v(\epsilon)_j = \sum_a \epsilon_a (v_a)_j$. Let $\widehat{c}_j \in \mathbb{Z}$ be the unique lift of $c_j$ to $\mathbb{Z}$ taking values in $\{ 1, -1\}$. Then $\omega^{c_j} - \omega^{-c_j} = \widehat{c}_j(\omega - \omega^2)$, so Equation (\ref{equ:spin3-2}) simplifies to
\begin{equation}\label{equ:spin3-3}
\sum_i 2q_i \omega^{w(\epsilon)_i} + \sum_j \widehat{u}(\epsilon)_j \omega^{v(\epsilon)_j} \widehat{c}_j (\omega - \omega^{2}) + \sum_j \widehat{\delta}_j \omega^{v(\epsilon)_j} = 0.
\end{equation}
This is an equation in $\mathbb{Z}[\omega]$. We consider the reduction of Equation (\ref{equ:spin3-3}) to $\mathbb{Z}_9[\omega]$. Let $f : \mathbb{Z} \to \mathbb{Z}_9$ be given by $f(c) = c^3 + 3c^2 - 2$. It is easily checked that $f(0) = -2$, $f(1) = 2$, $f(2) = 0$ and $f(c+3) = f(c)$. It then follows that for any $c \in \mathbb{Z}$, we have
\[
2\omega^c = f(c) \omega + f(-c)\omega^2 \text{ in } \mathbb{Z}_9[\omega].
\]
Consider the basis of $\mathbb{Z}_9[\omega]$ over $\mathbb{Z}_9$ given by $e = \omega + \omega^2$, $f = \omega - \omega^2$. Then we find using the above that
\[
2\omega^c = (3c^2 - 2)e + c^3 f \text{ in } \mathbb{Z}_9[\omega].
\]
Substituting into Equation (\ref{equ:spin3-3}) and noting that $ef = -f$, $f^2 = 3e$, we obtain:
\begin{align*}
& 2 \sum_i q_i \left( (3 w(\epsilon)_i^2 - 2)e + w(\epsilon)_i^3 f \right) + \sum_j \widehat{u}(\epsilon)_j \widehat{c}_j \left( (2-3v(\epsilon)_j^2)f + 3 v(\epsilon)_j^3 e \right) \\
& + \sum_j \widehat{\delta}_j \left( (3 v(\epsilon)_j^2 - 2)e + v(\epsilon)_j^3 f \right) = 0.
\end{align*}
Equating the coefficients of $f$ to zero, we have
\[
2 \sum_i q_i w(\epsilon)_i^3 + \sum_j \widehat{u}(\epsilon)_j \widehat{c}_j (2-3v(\epsilon)_j^2) + \sum_j \widehat{\delta}_j v(\epsilon)_j^3 = 0 \text{ in } \mathbb{Z}_9.
\]
Let $a,b,c$ be distinct. Expanding and collecting $\epsilon_a \epsilon_b \epsilon_c$ terms, we get
\begin{align*}
0&= 12 \sum_i q_i (w_a)_i (w_b)_i (w_c)_i  \\
& - 6 \sum_j \widehat{c}_j \left( (\widehat{u}_a)_j (v_b)_j (v_c)_j + (\widehat{u}_b)_j (v_a)_j (v_c)_j + (\widehat{u}_c)_j (v_a)_j (v_b)_j \right) \\
& + 6 \sum_j \widehat{\delta_j} (v_a)_j (v_b)_j (v_c)_j \text{ in } \mathbb{Z}_9.
\end{align*}

Hence after cancelling factors of $3$, we get the following equation in $\mathbb{Z}_3$:
\begin{align*}
0 &= \sum_i q_i (w_a)_i (w_b)_i (w_c)_i  \\
& + \sum_j c_j \left( (u_a)_j (v_b)_j (v_c)_j + (u_b)_j (v_a)_j (v_c)_j + (u_c)_j (v_a)_j (v_b)_j \right) \\
& - \sum_j \delta_j (v_a)_j (v_b)_j (v_c)_j \text{ in } \mathbb{Z}_3.
\end{align*}

Noting that $q_i = 1/(a_i b_i)$, $c_j = 1/c_j$ and $1/c_j^2 = 1$ in $\mathbb{Z}_3$, the above equation says that $\langle w_a w_b w_c \rangle = 0$.

\end{proof}

In the case $p=3$ we amend the definition of a weight system to include an addditional axiom:
\begin{itemize}
\item[(W9)]{$\langle w_a w_b w_c \rangle = 0$ for all distinct $a,b,c$.}
\end{itemize}

When $p \neq 2,3$, $(W9)$ follows from $(W6)$ and so does not need to be included in the definition. With this amended definition of a weight system and taking into account points (1) and (2) above, we see that the proof of Theorem \ref{thm:wtdec} carries over to the case $p=3$. When decomposing $W$ into a connected sum of weight systems $W_1$, $W_2$, we now also need to check that $W_1,W_2$ satisfy $(W9)$, but this is trivial.

\subsection{Case $p=2$}

Consider a $G = \mathbb{Z}_2 = \langle g \rangle$-action on a positive definite $4$-manifold $X$ with $H_1(X ; \mathbb{Z}) = 0$. The proof of Proposition \ref{prop:decomp} does not carry over to the case $p=2$ because $H^2(X ; \mathbb{Z})$ could also contain cyclotomic summands, that is, elements $e \in H^2(X ; \mathbb{Z})$ with $e^2 = 1$ and $ge = -e$. If $H^2(X ; \mathbb{Z})$ contains cyclotomic summands then there are no $G$-invariant spin$^c$-structures and our methods break down. However, if we assume that $H^2(X ; \mathbb{Z})$ contains no cyclotomic summands then the results of Section \ref{sec:definite} and \ref{sec:decomp} do carry over, except at a few specific points that we will address below.

For the rest of this section we assume that $H^2(X ; \mathbb{Z})$ contains no cyclotomic summands. In this case the localisation theorm implies that the fixed point set of $g$ consists of isolated points and embedded spheres, just as we had for odd $p$. The fixed point data $(a,b,c,\delta,w)$ satisfies conditions $(W1)$-$(W5)$. Axioms $(W6)$-$(W8)$ will not be needed, but we will need two additional axioms:
\begin{itemize}
\item[(W9)]{$\langle w_a w_b w_c \rangle = 0$ for all $a,b,c$ distinct.}
\item[(W10)]{$\sum_a (u_a)_j = \delta_j$ for all $j$, where $w_a = ( \{ (w_a)_i \} ; \{ (v_a)_j + (u_a)_j \tau \} )$.}
\end{itemize}

We will say that $W = (a,b,c,\delta,w)$ is a {\em weight system} for $p=2$ if it satisfied $(W1)$-$(W5)$ as well as $(W9)$ and $(W10)$.

Suppose that $W = (a,b,c,\delta,w)$ comes from a $\mathbb{Z}_2$-action on a positive definite $4$-manifold $X$ with $H_1(X ; \mathbb{Z}) = 0$ and $H^2(X ; \mathbb{Z})$ having no cyclotomic summands. Then clearly $W$ satisfies $(W1)$-$(W5)$. We will show that $(W9)$ and $(W10)$ also hold.

\begin{lemma}\label{lem:delta2}
For $p=2$, we have $\sum_a (u_a)_j = \delta_j$.
\end{lemma}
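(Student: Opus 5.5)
The plan is a direct lattice computation in $H^2(X;\mathbb{Z})$, using the fact that each fixed sphere represents a $G$-invariant class. No index theory or localisation should be needed beyond identifying $(u_a)_j$ and $\delta_j$ as mod $2$ reductions of intersection numbers.

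First I would record the identifications that hold for $p=2$ exactly as in Section \ref{sec:definite}. By the discussion in the introduction, the absence of cyclotomic summands gives an orthonormal basis $e_1,\dots,e_t$, $f_{b,0},f_{b,1}$ ($1\le b\le r$) of $H^2(X;\mathbb{Z})$ with $ge_a=e_a$ and $gf_{b,0}=f_{b,1}$, $gf_{b,1}=f_{b,0}$. As noted after Definition \ref{def:wtsys}, the equality $\alpha|_{\Sigma_j}=v_jx+u_j\tau$ in Section \ref{sec:loc} identifies two quantities mod $2$:
\[
(u_a)_j=\langle e_a,[\Sigma_j]\rangle, \qquad \delta_j=[\Sigma_j]^2 .
\]
So the claim is equivalent to the congruence $\sum_a\langle e_a,[\Sigma_j]\rangle\equiv[\Sigma_j]^2 \pmod 2$.

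The key geometric input is that $g$ fixes $\Sigma_j$ pointwise. Hence $g$ restricts to the identity on $\Sigma_j$, in particular preserving its orientation, so $g_*[\Sigma_j]=[\Sigma_j]$ and the Poincar\'e dual lies in $H^2(X;\mathbb{Z})^G$. The invariant lattice has basis $e_1,\dots,e_t$, $f_1,\dots,f_r$ with $f_b=f_{b,0}+f_{b,1}$, so I can write
\[
[\Sigma_j]=\sum_a\sigma_{a,j}e_a+\sum_b c_{b,j}f_b
\]
with integer coefficients. Here $\sigma_{a,j}=\langle e_a,[\Sigma_j]\rangle$, because $\langle e_a,f_b\rangle=0$.

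Squaring, and using $\langle f_b,f_{b'}\rangle=2\delta_{bb'}$ and $\langle e_a,f_b\rangle=0$, gives
\[
[\Sigma_j]^2=\sum_a\sigma_{a,j}^2+2\sum_b c_{b,j}^2\equiv\sum_a\sigma_{a,j}^2\equiv\sum_a\sigma_{a,j}\pmod 2,
\]
since $\sigma^2\equiv\sigma$ mod $2$. Reducing mod $2$ yields $\delta_j=\sum_a(u_a)_j$, which is the claim.

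The only step needing care is the invariance of $[\Sigma_j]$, together with checking that the identification of $(u_a)_j$ with $\langle e_a,[\Sigma_j]\rangle$ remains valid for $p=2$. This identification came from the localisation description of equivariant classes restricted to $\Sigma_j$, which the section asserts carries over once there are no cyclotomic summands. Given that, the argument is purely arithmetic.
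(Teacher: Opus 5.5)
Your proposal is correct and is essentially the paper's proof. The paper also expands $[\Sigma_j]$ in the invariant basis $e_a, f_b$ and uses $\langle e_a, f_b\rangle = 0$ and $\langle f_b, f_{b'}\rangle \equiv 0 \pmod 2$. You additionally spell out two steps it leaves implicit: why $[\Sigma_j]$ is $G$-invariant, and why $\sigma_{a,j}^2 \equiv \sigma_{a,j} \pmod 2$.
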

\begin{proof}
We have $[\Sigma_j] = \sum_a \langle e_a , [\Sigma_j] \rangle e_a + \sum_k r_k f_k$ for some $r_k \in \mathbb{Z}$. Noting that $\langle e_a , f_k \rangle = 0$ and $\langle f_k , f_{k'} \rangle = 0 \; ({\rm mod} \; 2)$, it follows that 
\[
\delta_j = [\Sigma_j]^2 = \sum_a \langle e_a , [\Sigma_j] \rangle = \sum_a (u_a)_j \; ({\rm mod} \; 2).
\]
\end{proof}

\begin{lemma}\label{lem:abcp2}
We have $\langle w_a w_b w_c \rangle = 0$ for all distinct $a,b,c$.
\end{lemma}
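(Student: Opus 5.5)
The plan is to imitate the argument given above for $p=3$, replacing the $\mathbb{Z}_9$-reduction of the spin$^c$ Lefschetz formula with a $2$-adic reduction adapted to $G=\mathbb{Z}_2$. Localisation alone cannot give the result. The class $\widehat{e}_a\widehat{e}_b\widehat{e}_c$ has degree $6$, so $p_*(\widehat{e}_a\widehat{e}_b\widehat{e}_c)$ lies in $H^2_{G}(pt;\mathbb{Z})=\mathbb{Z}_2$. That group is not detected by non-equivariant data, so some genuinely equivariant input is needed. The input I will use is the vanishing of the equivariant index: for every $\epsilon\in\{1,-1\}^t$ the spin$^c$-structure $\mathfrak{s}(\epsilon)$ with $c(\mathfrak{s}(\epsilon))=c(\epsilon)$ satisfies $c(\epsilon)^2=\sigma(X)$, so $d=0$. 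By \cite[Theorem 9.1]{bar} all multiplicities of $Spin\in R[G]$ are non-negative, hence $Spin=0$ and $\chi_{Spin}(\widehat{g})=0$ for the lift supplied by Proposition \ref{prop:eqspinc}.

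First I redo the computation of Proposition \ref{prop:lef} for $g$ itself rather than $g^2$, since $g^2=1$ when $p=2$. At an isolated fixed point $g$ acts on $T_{x_i}X$ as $-1$, i.e.\ $(a_i,b_i)=(1,1)$. The lift $\widehat{g}$ then acts on the twisting line $E$ of $S^\pm\cong E\otimes S^\pm_{can}$ by a fourth root of unity $i^{k_i}$, with $2k_i+2\equiv w(\epsilon)_i$ in the appropriate sense. On a fixed sphere $\Sigma_j$ the same holds with a twist $i^{l_j}$ and normal weight $c_j=1$. The local contributions become explicit elements of $\mathbb{Z}[\tfrac12][i]$:
\begin{itemize}
\item a term $\pm\tfrac14 i^{k_i}$ at each point, up to a universal factor;
\item terms involving $\widehat{u}(\epsilon)_j=\langle c(\epsilon),[\Sigma_j]\rangle$ and $\widehat{\delta}_j=[\Sigma_j]^2$ times $i^{l_j}$ on each sphere.
\end{itemize}
I then clear denominators by a power of $2$ to obtain an identity in $\mathbb{Z}[i]$, valid for every $\epsilon$. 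The first thing to check is that the exponents $k_i,l_j$ are affine-linear in $\epsilon$ modulo $4$, say $k_i=\sum_a\epsilon_a\widehat{(w_a)}_i+\text{const}$ with integer lifts of the weights. This is plausible because $\widehat{g}$ on the determinant line of $\mathfrak{s}(\epsilon)$ is the tensor product of the lifts for the $\widehat{e}_a$ and $\widehat{f}$.

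Next, as in the $p=3$ case, I replace $i^{c}$ by a polynomial in $c$. Working in $\mathbb{Z}_{2^N}[i]$ for $N$ around $4$, I choose polynomials $f,g$ with $i^{c}=f(c)+g(c)\,i$ for all $c$, using that $c\mapsto i^c$ has period $4$. I substitute and take one real component of the identity. Expanding in $\epsilon$ and extracting the coefficient of $\epsilon_a\epsilon_b\epsilon_c$ for distinct $a,b,c$ produces a factor $3!=6$ times a sum of triple products:
\begin{itemize}
\item $(w_a)_i(w_b)_i(w_c)_i$ at the points;
\item $(u_a)_j(v_b)_j(v_c)_j$ and its permutations, and $\delta_j(v_a)_j(v_b)_j(v_c)_j$, on the spheres.
\end{itemize}
Dividing by the exact power of $2$ present should leave, mod $2$, exactly $\sum_i (w_aw_bw_c)_i-\sum_j\delta_j v_av_bv_c+\sum_j(u_av_bv_c+u_bv_av_c+u_cv_av_b)$, which is $\langle w_aw_bw_c\rangle$ because $a_i=b_i=c_j=1$ in $\mathbb{Z}_2$.

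The main obstacle is the $2$-adic bookkeeping. I have to pin down the local contributions with their exact powers of $2$, and I have to see that the lower-order terms in $\epsilon$ do not interfere. Those terms come from the quadratic and constant parts of $f,g$, from $\widehat{f}$, and from the constant in $k_i$. I also have to check that the modulus $2^N$ is large enough that the factor $6$ times the triple products survives division. I would first try the model case of a linear action on $\mathbb{CP}^2$ and connected sums of such actions, to fix the normalisations and the choice of $N$, before doing the general expansion.
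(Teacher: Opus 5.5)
Your overall strategy matches the paper's: use the vanishing equivariant index of $\mathfrak{s}(\epsilon)$, write the Lefschetz formula for $g$ itself, track how the local characters depend on $\epsilon$, and isolate the term cubic in $a,b,c$. However, the device you propose for isolating that term does not work. Transplanting the $\mathbb{Z}_9$ trick to $\mathbb{Z}_{2^N}[i]$ and equating coefficients of $\epsilon_a\epsilon_b\epsilon_c$ fails $2$-adically, and this is not just bookkeeping.

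First, the polynomials $f,g$ do not exist once $N\ge 3$. For any polynomial $h$ with coefficients in $\mathbb{Z}[i]$, the fourth difference $\Delta^4 h(0)$ is divisible by $4!=24$. For $h(c)=i^c$, however, $\Delta^4h(0)=(i-1)^4=-4\not\equiv 0 \pmod 8$. So periodicity of $c\mapsto i^c$ is not enough. For $p=3$, by contrast, $(\omega-1)^k$ has enough $3$-divisibility relative to $k!$ modulo $9$.

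Second, even modulo $4$, where such $f,g$ do exist, equating coefficients is illegitimate. Since $\epsilon_a\equiv 1\pmod 2$, an identity $F(\epsilon)\equiv 0\pmod{2^N}$ for all $\epsilon\in\{\pm1\}^3$ only gives $8c_{abc}\equiv 0 \pmod{2^N}$ for the multilinear coefficient $c_{abc}$, because Fourier inversion on $\{\pm1\}^3$ divides by $8$. This loss is sharp: take $F=2^{N-3}(1-\epsilon_a)(1-\epsilon_b)(1-\epsilon_c)$. In the $p=3$ argument this step was free only because $2$ is a unit modulo $9$.

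The missing idea is to work with the variables $\theta_a\in\{0,1\}$, where $\epsilon_a=(-1)^{\theta_a}$, and keep the identity exact over $\mathbb{Z}$.
\begin{itemize}
\item By Proposition \ref{prop:eqspinc} the lift satisfies $\widehat g^2=1$, so the local characters are signs and no fourth roots of unity are needed.
\item Flipping $\epsilon_a$ tensors the spinor bundles by a line bundle whose weight at $x_i$ is $(w_a)_i$. Hence $\chi_i(\epsilon)=\chi_i(-1)^{\sum_a\theta_a(w_a)_i}$, and similarly $\mu_j(\epsilon)$ on the spheres.
\item Since $(-1)^{\theta w}=1-2\theta w$ for $\theta,w\in\{0,1\}$, inclusion--exclusion over $\theta$ supported on $\{a,b,c\}$ (with $\theta_d=0$ otherwise) requires no division.
\item The triple term then appears as $(-2)^3$ times the desired sum. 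The paper does the same thing by reducing mod $16$ with $(-1)^w\equiv 1-2w^4$ and peeling off congruences mod $16,8,4,2$.
\end{itemize}

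Two smaller points. For $p=2$ the sphere contribution is just $\tfrac14\chi(\mathfrak{s},\Sigma_j)\langle c(\mathfrak{s}),[\Sigma_j]\rangle$; there is no separate $[\Sigma_j]^2$ term. So the $\delta_j(v_a)_j(v_b)_j(v_c)_j$ term in $\langle w_aw_bw_c\rangle$ only appears after using $\sum_d(u_d)_j\equiv\delta_j\pmod 2$ (Lemma \ref{lem:delta2}) together with the oddness of $\chi_i$ and $\mu_j$. Your outline does not mention this input.
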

\begin{proof}
In Proposition \ref{prop:lef} we computed $\chi_{Spin}(g^2)$ in the case $p$ is odd. In the case $p=2$, a similar calculation gives
\[
\chi_{Spin}(g) = \frac{1}{4}\sum_i \chi(\mathfrak{s},x_i) + \frac{1}{4}\sum_j \chi(\mathfrak{s},\Sigma_j) \langle c(\mathfrak{s}) , [\Sigma_j] \rangle
\]
where $\chi(\mathfrak{s},x_i), \chi(\mathfrak{s},\Sigma_j) \in \{ 1,-1\}$ are defined as follows. Let $S^{\pm}$ be the spinor bundles corresponding to $\mathfrak{s}$. At the isolated point $x_i$ with weights $(a_i,b_i)$ we have $S^{\pm}|_{x_i} = E_i \otimes S^{\pm}_{can}$ for some $1$-dimensional representation $E_i$ of $\mathbb{Z}_2$. Then $\chi(\mathfrak{s},x_i)$ denotes the character of $E_i$ evaluated on $g$. Similarly on the surface $\Sigma_j$ we have $S^{\pm}|_{\Sigma_j} = F_j \otimes S^{\pm}_{can}$ for some complex $\mathbb{Z}_2$-equivariant line bundle $F_j$. The fibre of $F_j$ over any point of $\Sigma_j$ is a $1$-dimensional representation of $\mathbb{Z}_2$ and $\chi(\mathfrak{s},\Sigma_j)$ denotes the character of this representation evaluated on $g$.

If we take $\mathfrak{s} = \mathfrak{s}(\epsilon)$ to have $c(\mathfrak{s}) = \epsilon_1 \widehat{e}_1 + \cdots + \epsilon_t \widehat{e}_t + f$, then $\chi_{Spin}(g) = 0$ by \cite[Theorem 8.1]{bar}. Hence we obtain
\begin{equation}\label{equ:indp21}
0 = \sum_i \chi_i(\epsilon) + \sum_j \mu_j(\epsilon) \sum_a \epsilon_a (\widehat{u}_a)_j
\end{equation}
where $\chi_i(\epsilon) = \chi( \mathfrak{s}(\epsilon) , x_i)$, $\chi_j(\epsilon) = \chi( \mathfrak{s}(\epsilon) , \Sigma_j )$.

Let $\theta_a \in \{0,1\}$ be chosen so that $\epsilon_a = (-1)^{\theta_a}$. Set $\chi_i = \chi_i( (1,1,\dots , 1) )$ and $\mu_j = \mu_j( (1,1, \dots , 1))$. It follows that
\[
\chi_i(\epsilon) = \chi_i (-1)^{\theta_1 (w_1)_i + \cdots + \theta_t (w_t)_i }, \quad \mu_j(\epsilon) = \mu_j (-1)^{\theta_1 (v_1)_j + \cdots + \theta_t (v_t)_j }.
\]
Then Equation (\ref{equ:indp21}) can be re-written as
\begin{equation}\label{equ:indp22}
0 = \sum_i \chi_i (-1)^{\theta_1 (w_1)_i + \cdots + \theta_t (w_t)_i } + \sum_j \mu_j (-1)^{\theta_1 (v_1)_j + \cdots + \theta_t (v_t)_j } \sum_a (-1)^{\theta_a} (\widehat{u}_a)_j.
\end{equation}
This is an equation in $\mathbb{Z}$. We reduce to $\mathbb{Z}_{16}$ and make use of the fact that $(-1)^w = 1-2w^4 \; ({\rm mod} \; 16)$ for any $w \in \mathbb{Z}$. Hence Equation (\ref{equ:indp22}) gives:
\begin{align}
&\sum_i \chi_i (1 - 2(\theta_1 (\widehat{w}_1)_i + \cdots + \theta_t (\widehat{w}_t)_i)^4 ) \label{equ:indp23} \\
& + \sum_j \mu_j (1 - 2(\theta_1 (\widehat{v}_1)_j + \cdots + \theta_t (\widehat{v}_t)_j )^4 )\sum_a (1-2\theta_a)(\widehat{u}_a)_j = 0 \; ({\rm mod} \; 16) \nonumber
\end{align}
where $(\widehat{w}_a)_i, (\widehat{v}_a)_j \in \{0,1\}$ are lifts of $(w_a)_i, (v_a)_j$ to $\mathbb{Z}$.

Taking $\theta_a = 0$ for all $a$ in (\ref{equ:indp23}) gives
\begin{equation}\label{equ:m1}
\sum_i \chi_i + \sum_j \mu_j \sum_a (\widehat{u}_a)_j = 0 \; ({\rm mod} \; 16).
\end{equation}

Taking $\theta_a = 1$ and $\theta_b = 0$ for all $b \neq a$ in (\ref{equ:indp23}) and making use of (\ref{equ:m1}), we obtain:
\begin{equation}\label{equ:m2}
\sum_i \chi_i (\widehat{w}_a)_i + \mu_j (\widehat{v}_a)_j ( \sum_c (\widehat{u}_c)_j - 2(\widehat{u}_a)_j) + \sum_j \mu_j (\widehat{u}_a)_j = 0 \; ({\rm mod} \; 8).
\end{equation}

Taking $\theta_a = \theta_b = 1$, $\theta_c = 0$ for all $c \neq a,b$ in (\ref{equ:indp23}) and making use of (\ref{equ:m1}), (\ref{equ:m2}), we obtain:
\begin{align}
&\sum_i \chi_i (\widehat{w}_a)_i (\widehat{w}_b)_i + \sum_j \mu_j ( (\widehat{v}_a)_j (\widehat{u}_b)_j  + (\widehat{v}_b)_j (\widehat{u}_a)_j ) \label{equ:m3} \\
& + \sum_j \mu_j (\widehat{v}_a)_j (\widehat{v}_b)_j ( \sum_c (\widehat{u}_c)_j - 2(\widehat{u}_a)_j - 2(\widehat{u}_b)_j ) = 0 \; ({\rm mod} \; 4). \nonumber
\end{align}

Taking $\theta_a = \theta_b = \theta_c = 1$, $\theta_d = 0$ for all $d \neq a,b,c$ in (\ref{equ:indp23}) and making use of (\ref{equ:m1}), (\ref{equ:m2}), (\ref{equ:m3}), we obtain:
\begin{align}
&\sum_i \chi_i (\widehat{w}_a)_i (\widehat{w}_b)_i (\widehat{w}_c )_i \label{equ:m4} \\
&+ \sum_j \mu_j ( (\widehat{v}_a)_j (\widehat{v}_b)_j (\widehat{u}_c)_j  + (\widehat{v}_a)_j (\widehat{v}_c)_j (\widehat{u}_b)_j  + (\widehat{v}_b)_j(\widehat{v}_c)_j(\widehat{u}_a)_j ) \nonumber \\
& + \sum_j \mu_j (\widehat{v}_a)_j (\widehat{v}_b)_j (\widehat{v}_c)_j ( \sum_d (\widehat{u}_d)_j - 2(\widehat{u}_a)_j - 2(\widehat{u}_b)_j -2(\widehat{u}_c)_j ) = 0 \; ({\rm mod} \; 2). \nonumber
\end{align}
But since $\chi_i = \mu_j = 1 \; ({\rm mod} \; 2)$ and since $\sum_d (\widehat{u}_d)_j = \delta_j \; ({\rm mod} \; 2)$ by Lemma \ref{lem:delta2}, this reduces to

\begin{align*}
&\sum_i (w_a)_i (w_b)_i (w_c )_i \\
&+ \sum_j ( (v_a)_j (v_b)_j (u_c)_j  + (v_a)_j (v_c)_j (u_b)_j  + (v_b)_j(v_c)_j(u_a)_j ) \\
& + \sum_j \delta_j (v_a)_j (v_b)_j (v_c)_j  = 0 \; ({\rm mod} \; 2),
\end{align*}
which says that $\langle w_a w_b w_c \rangle = 0$.

\end{proof}

\begin{lemma}\label{lem:surf}
Let $W = (a,b,c,\delta,w)$ satisfy $(W1)-(W5)$ for $p=2$. Suppose $W$ is of type $(n,m)$. Then $m > 0$.
\end{lemma}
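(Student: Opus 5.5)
The plan is to argue by contradiction: assume $m = 0$ and show that axioms $(W3)$ and $(W4)$ clash. This works because over $\mathbb{Z}_2$ every element of $H_{n,0}$ is idempotent.

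First, specialise the data to $p=2$. By $(W1)$ all of $a_i, b_i, c_j$ are non-zero elements of $\mathbb{Z}_2$, so they all equal $1$. If $m = 0$, the ring is simply $H_{n,0} = \mathbb{Z}_2^n$ with componentwise multiplication. In this case the pairing (\ref{equ:pairing}) has no surface terms and becomes
\[
\langle f \rangle = \sum_i \frac{f_i}{a_ib_i} = \sum_i f_i \quad \text{for } f = (f_1, \dots , f_n).
\]

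Second, note that in $\mathbb{Z}_2$ we have $z^2 = z$ for every $z$. Hence every element $f \in \mathbb{Z}_2^n$ satisfies $f^2 = f$ componentwise. In particular $w_a^2 = w_a$ for each weight vector.

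Third, we need at least one weight vector. The definition of a weight system requires $n + 2m \ge 3$, so $t = n+2m-2 \ge 1$. Pick any index $a$. Then $(W3)$ gives $\langle w_a \rangle = 0$, and by the idempotence above,
\[
\langle w_a w_a \rangle = \langle w_a^2 \rangle = \langle w_a \rangle = 0.
\]
This contradicts $(W4)$, which requires $\langle w_a w_a \rangle = 1$. Therefore $m > 0$.

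There is no real obstacle here. The only points to check are that the surface terms (those involving $\delta_j$ and $\tau$) are genuinely absent when $m = 0$, and that $t \ge 1$, so that a weight vector $w_a$ actually exists to run the argument on.
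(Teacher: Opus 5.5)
Your proposal is correct and is essentially the paper's own argument: with $m=0$ every element of $H_{n,0}=\mathbb{Z}_2^n$ is idempotent, so $0=\langle w_a\rangle=\langle w_a^2\rangle=1$ contradicts $(W3)$ and $(W4)$. Your explicit check that $t=n+2m-2\ge 1$, so that some $w_a$ exists, makes precise a step the paper leaves implicit.
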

\begin{proof}
Suppose that $m=0$. Then for any $a$, $w_a = ( (w_a)_1 , \dots , (w_a)_n )$ for some $(w_a)_i \in \mathbb{Z}_2$. But this implies $w_a^2 = w_a$ and hence $0 = \langle w_a \rangle = \langle w_a^2 \rangle = 1$, a contradiction.
\end{proof}

By Lemma \ref{lem:surf}, we can always weight shift so that $W$ is based at a surface.

\begin{lemma}\label{lem:dabdba2}
Let $W = (a,b,c,\delta,w)$ be a weight system which comes from a $\mathbb{Z}_2$-action on a positive definite $4$-manifold $X$ with $H_1(X ; \mathbb{Z}) = 0$ and the action of $\mathbb{Z}_2$ having no cyclotomic summands. Assume that $W$ is based at a surface. Then $D_{ab}D_{ba} = 0$.
\end{lemma}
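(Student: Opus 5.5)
The plan is to avoid the route of Proposition \ref{prop:dabdba}. That argument ends with $4D_{ab}D_{ba}=0$, which is vacuous in $\mathbb{Z}_2$. Instead I will derive a contradiction directly from the multiplicative structure of $H_{n,m}$ over $\mathbb{Z}_2$.

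\textbf{Step 1: the product formula for $p=2$.} First I check that the relation $w_aw_b = D_{ab}w_a + D_{ba}w_b$ of Proposition \ref{prop:products} still holds. Since $W$ is based at a surface, we take $\lambda = e_0 = (0,\dots,0;\tau,0,\dots,0)$ and $G=0$. Basing gives $e_0w_a=0$, and hence $\lambda w_a = 0 = Gw_a$. Also $1,\lambda,w_0,\dots,w_t$ is a basis, orthogonal apart from the pairing $\langle 1,\lambda\rangle = 1$. The proof of Proposition \ref{prop:products} used only the following facts:
\begin{itemize}
\item[(i)] $\langle w_aw_b\rangle = 0$ for $a\neq b$, which is $(W4)$;
\item[(ii)] $\langle \lambda w_aw_b\rangle = 0$;
\item[(iii)] $\langle w_aw_bw_c\rangle = 0$ for distinct $a,b,c$.
\end{itemize}
For $p=2$, fact (iii) is exactly axiom $(W9)$, which holds by Lemma \ref{lem:abcp2}. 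So the expansion of $w_aw_b$ in this basis has only $w_a$- and $w_b$-components, and these are $D_{ab}$ and $D_{ba}$.

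\textbf{Step 2: the contradiction.} Suppose $D_{ab}D_{ba}\neq 0$ for some $a\neq b$. In $\mathbb{Z}_2$ this forces $D_{ab}=D_{ba}=1$, so
\[
w_aw_b = w_a + w_b .
\]
I then compare the two sides componentwise in $H_{n,m}$.
\begin{itemize}
\item At an isolated point $i$, the entries $x=(w_a)_i$ and $y=(w_b)_i$ satisfy $xy = x+y$ in $\mathbb{Z}_2$. Checking the four cases leaves only $x=y=0$.
\item At a surface $j$, write $(w_a)_j = x_0+x_1\tau$ and $(w_b)_j = y_0+y_1\tau$. The relation becomes
\[
x_0y_0 + (x_0y_1+x_1y_0)\tau = (x_0+y_0) + (x_1+y_1)\tau .
\]
The constant term forces $x_0=y_0=0$. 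The $\tau$-term then gives $x_1+y_1=0$.
\end{itemize}
Consequently $w_a$ is zero at every isolated point and is a multiple of $\tau$ at every surface. Since $\tau^2=0$, this gives $w_a^2=0$, so $\langle w_a^2\rangle = 0$. This contradicts $(W4)$, which requires $\langle w_a^2\rangle = 1$. Hence $D_{ab}D_{ba}=0$.

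\textbf{Main obstacle.} The only step needing care is Step 1: confirming that every ingredient of Proposition \ref{prop:products} survives in characteristic $2$. In particular, (iii) must be supplied by $(W9)$ rather than by $(W6)$, and the choice $\lambda=e_0$ must genuinely satisfy $\langle\lambda,\lambda\rangle=0$ and $\lambda w_a = Gw_a$. Once the product formula is available, the componentwise check in Step 2 is elementary.
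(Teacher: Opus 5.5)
Your proposal is correct. Step 1 matches the paper: the paper also uses Lemma \ref{lem:abcp2} to carry the proof of Proposition \ref{prop:products} over to $p=2$, with $\lambda=e_0$ and $G=0$, and obtains $w_aw_b=D_{ab}w_a+D_{ba}w_b$.

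The final step is where you differ.

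\textbf{The paper's route.} It notes that in characteristic $2$ the element $w_a^2+w_a=(0;\{(u_a)_j\tau\})$ is a pure $\tau$-multiple, so $(w_a^2+w_a)(w_b^2+w_b)=0$. Expanding this product with the product formula, and using $D^2=D$ in $\mathbb{Z}_2$, gives the identity $D_{ab}D_{ba}(w_a+w_b)=0$. Linear independence of $w_a,w_b$ then finishes.

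\textbf{Your route.} You argue by contradiction. If $D_{ab}=D_{ba}=1$, then $w_aw_b=w_a+w_b$. The Boolean equation $xy=x+y$ on the constant parts forces $w_a$ to vanish modulo $\tau$. Hence $w_a^2=0$, contradicting $(W4)$.

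\textbf{Comparison.} Both arguments rest on the same fact: modulo $\tau$, the ring $H_{n,m}$ over $\mathbb{Z}_2$ is Boolean. The paper packages this into a single ring identity that produces $D_{ab}D_{ba}$ directly as a coefficient, with no case analysis. Your componentwise check is more transparent. It also needs less than you wrote: only the constant terms matter, and your $\tau$-coefficient computation is superfluous.
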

\begin{proof}
Using Lemma \ref{lem:abcp2}, the proof of Proposition \ref{prop:products} carries over to the $p=2$ case. Thus $w_a w_b = D_{ab}w_a + D_{ba}w_b$ for any $a \neq b$.

Writing $w_a = ( \{ (w_a)_i \} ; \{ (v_a)_i + (u_a)_i \tau \} )$, we see that $w_a^2 + w_a = ( 0 ; \{ (u_a)_i \tau \} )$. Hence for any $a \neq b$, we have $(w_a^2 + w_a)(w_b^2 + w_b) = 0$. Expanding gives:
\begin{equation}\label{equ:wab0}
w_a^2 w_b^2 + w_a^2 w_b + w_b^2 w_a + w_a w_b = 0.
\end{equation}
From $w_a w_b = D_{ab}w_a + D_{ba} w_b$, we get $w_a^2 w_b^2 = (D_{ab}w_a + D_{ba}w_b)^2 = D_{ab}w_a^2 + D_{ba}w_b^2$ (using $x^2 = x$ for any $x \in \mathbb{Z}_2$). Substituting into Equation (\ref{equ:wab0}) gives
\begin{align*}
0 &= D_{ab}(w_a^2+w_a) + D_{ba}(w_b^2 + w_b) + (w_a+w_b)(D_{ab}w_a + D_{ba}w_b) \\
&= D_{ab}w_a + D_{ba}w_b + D_{ab}w_a w_b + D_{ba}w_a w_b \\
&= (D_{ab} + D_{ab}^2 + D_{ab}D_{ba})w_a + (D_{ba} + D_{ba}^2 + D_{ab}D_{ba})w_b \\
&= D_{ab}D_{ba}w_a + D_{ab}D_{ba}w_b.
\end{align*}
Hence $D_{ab}D_{ba} = 0$.
\end{proof}

Next we explain how to adapt the proof of Theorem \ref{thm:wtdec} to the case $p=2$. We list below all the places where axioms $(W6),(W7),(W8)$ were used and then explain how the same results hold for $p=2$, assuming $(W9), (W10)$.

\begin{itemize}
\item[(1)]{We used $(W6),(W7)$ in the proof of Proposition \ref{prop:t1}.}
\item[(2)]{We used $(W6)$ to deduce that $\langle w_a w_b w_c \rangle = 0$.}
\item[(3)]{We used $(W6),(W7)$ in the proof of Proposition \ref{prop:dabdba}.}
\item[(4)]{We used $(W6),(W7),(W8)$ in Case 1 of the proof of Theorem \ref{thm:wtdec}.}
\item[(5)]{We used $(W6)$ in Cases 2 and 3 of the proof of Theorem \ref{thm:wtdec}.}
\end{itemize}

We address each of these points below.

{\bf Point (1).} Let $W = (a,b,c,\delta,w)$ be a weight system for $p=2$ with $n+2m = 3$. We need to show that $W$ is the weight system of a linear $\mathbb{Z}_2$-action on $\mathbb{CP}^2$. By Lemma \ref{lem:surf}, $W$ is of type $(1,1)$ so we can write is as $W = (a,b,c,\delta,w)$ with $w = (w_0 ; w_2 \tau)$. Since $a,b,c \neq 0$, we have $a=b=c=1$. From $0 = \langle w \rangle = w_0 + w_2$, we have $w_0 = w_2$ and from $1 = \langle w^2 \rangle = w_0$, we get $w_0 = w_2 = 1$. Lastly from $(W10)$ we get $\delta = w_2 = 1$. This is the weight system for the $\mathbb{Z}_2$-action on $\mathbb{CP}^2$ given by $g[x,y,z] = [x,-y,-z]$.

{\bf Point (2).} We proved $\langle w_a w_b w_c \rangle = 0$ in Lemma \ref{lem:abcp2}.

{\bf Point (3).} We proved $D_{ab} D_{ba} = 0$ in Lemma \ref{lem:dabdba2}.

{\bf Point (4).} In Case 1, we have $w_0 = (u_1,u_2, 0 , \dots , 0 ; 0 , \dots , 0)$ where $u_1,u_2 \neq 0$ and $u_1 \neq u_2$. For $p=2$ this is impossible, so Case 1 does not occur.

{bf Point (5).} In Case 2, we used $(W6)$ to deduce that $\delta_1 = 1$. For $p=2$, we will instead deduce this using $(W10)$. Since $w_0 = (0 , \dots , 0 ; 1+\tau , 0 , \dots , 0)$ and $w_0 w_a = 0$ for $a \neq 0$, it follows that $(u_a)_1 = 0$ for $a \neq 0$ and $(u_0)_1 = 1$. Then $(W10)$ gives $\delta_1 = \sum_a (u_a)_1 = 1$. In Case 3, we used $(W6)$ to deduce that $v^2 = 1$. But for $p=2$ this is immediate, since $v \neq 0$.

Lastly, since we are using a modified definition of weight system that includes two new axioms $(W9),(W10)$, we must check that the weight systems $W_1,W_2$ produced in the proof of Theorem \ref{thm:wtdec} also satisfy $(W9),(W10)$. This is straightforward to check.


\bibliographystyle{amsplain}

\end{document}